\documentclass[11pt]{article}
\usepackage[utf8]{inputenc}
\usepackage{microtype}
\usepackage{bold-extra}
\usepackage{chngcntr}
\usepackage{tikz}
\usepackage{setspace}
\usepackage{url}
\usepackage{amsfonts}
\usepackage{epsfig}
\usepackage{commath}
\usepackage{natbib}
\usepackage{graphicx}
\usepackage{bbm, float}
\usepackage{dsfont}
\usepackage{color}
\usepackage{adjustbox}
\usepackage{amsmath, amssymb, amstext, amsfonts, amsthm, latexsym}
\usepackage{diagbox}
\newcommand{\bm}[1]{\boldsymbol{#1}}

\newif\ifshowgrammar
\showgrammartrue 

\usepackage{caption}
\usepackage{subcaption}
\usepackage{multirow}
\usepackage{color}
\usepackage{listings}
\usepackage{titlesec}
\usepackage{enumerate}
\usepackage{comment}
\usepackage{environ}
\usepackage{booktabs}
\usepackage{appendix}
\usepackage{bbm}
\usepackage{caption}
\usepackage{pgfplots}
\usepackage{makecell}
\usepackage{bm}
\usepackage{threeparttable}

\usepackage[linesnumbered, ruled, vlined]{algorithm2e}
\pgfplotsset{compat=1.16}
\definecolor{legcol1}{HTML}{1B6CA8}
\definecolor{legcol2}{HTML}{C0392B}
\definecolor{legcol3}{HTML}{27AE60}
\definecolor{legcol4}{HTML}{8E44AD}
\usepackage{standalone}
\usepackage[margin=1in]{geometry}
\usepackage[colorlinks=true, linkcolor=blue, citecolor=blue, urlcolor=blue]{hyperref}
\theoremstyle{plain}
\newtheorem{theorem}{Theorem}
\newtheorem{corollary}{Corollary}
\newtheorem{proposition}{Proposition}

\newtheorem{lemma}{Lemma}

\theoremstyle{definition}

\newtheorem{definition}{Definition}
\newtheorem{example}{Example}
\newtheorem{assumption}{Assumption}

\theoremstyle{remark}

\newtheorem{remark}{Remark}

\title{Sharp bounds for products of dependent random variables}

\usepackage{authblk}
\author[1]{Christopher Blier-Wong}
\author[2]{Jinghui Chen}

\affil[1]{Department of Statistical Sciences, University of Toronto, Canada}
\affil[2]{Department of Mathematics, University of Connecticut, USA}

\date{\today}

\renewcommand{\d}{\mathrm{d}}
\newcommand{\id}{\mathbbm{1}}
\newcommand{\p}{\mathbb{P}}
\newcommand{\E}{\mathbb{E}}
\newcommand{\R}{\mathbb{R}}
\newcommand{\U}{\mathrm{U}}

\newcommand{\eqiid}{\overset{\mathrm{iid}}{=}}

\DeclareMathOperator{\conv}{conv}

\begin{document}
\maketitle

\begin{abstract}
We study the sharp bounds on $\mathbb{E}[X_1\cdots X_d]$ when the univariate marginal distributions are known, but the dependence structure between them is unspecified. Maximizing products over non-negative variables is straightforward via the comonotonic coupling, but the problem is more subtle when the marginals can take both positive and negative values. Specifically, two negative realizations can be matched to yield a positive product, whereas a single negative realization necessarily yields a negative product. We decompose the problem into a magnitude part and a sign part, and show that universal upper and lower bounds for the product expectation follow from comonotonic coupling of the absolute values and properly chosen sign vectors. Under a mild regularity assumption, we give necessary and sufficient conditions for these universal bounds to be attainable. For the upper bound, the marginal sign-bias vector must belong to the even-parity polytope, while for the lower, the corresponding condition involves the odd-parity polytope. We construct the extremal couplings via measurable selections on the parity polytope whenever these conditions hold. We study the case of identical marginals in more detail and provide examples of nonsymmetric extremal couplings that attain the universal bounds. We explicitly construct the extremal copulas in three dimensions, and use a recursive parity decomposition to obtain higher-dimensional extremal copulas from the trivariate ones. 
\end{abstract}

\noindent
\textbf{Keywords}: Expected product, Comonotonicity, Copula, Sharp bounds, Dependence uncertainty.

\section{Introduction}\label{sec:intro}
The marginal distribution of a random variable is often relatively easy to estimate. For example, the distribution of returns on a company's stock can be inferred from its historical price data, and such an estimate generally becomes more precise as more data are collected over time. In contrast, estimating the dependence structure among marginal distributions is much more challenging \citep{mcneil2015quantitative, wang2015extreme}. Therefore, we need tools to understand dependence uncertainty.

A classical dependence uncertainty problem is to determine bounds on the expectation of $\psi(X_1, \dots, X_d)$, where $\psi : \mathbb{R}^d \rightarrow \mathbb{R}$ is a measurable function, that is,
\begin{subequations}
	\label{sharp bounds}
	\begin{align}
		m_d^{\psi}&=\inf\{\mathbb{E}[\psi(X_1, \dots, X_d)];\ X_i\sim F_i, \ 1\leq i\leq d\}, \label{LB} \\
		M_d^{\psi}&=\sup\{\mathbb{E}[\psi(X_1, \dots, X_d)];\ X_i\sim F_i, \ 1\leq i\leq d\}. \label{UB}
	\end{align}
\end{subequations}
These two optimization problems arise in a wide range of areas, including operations research, statistics, and quantitative risk management. We refer to \citet{puccetti2012computation} for applications in statistics, to \citet{embrechts2010risk} for quantitative risk management, and to \citet{hsu1984approximation} and \citet{jakobsons2016negative} for applications in operations research. We also refer to \citet{ruschendorf2024model} for a detailed overview. Many important instances of \eqref{LB} and \eqref{UB} have been studied in the literature. For supermodular $\psi$, \eqref{UB} admits a general analytical solution: whenever the relevant expectations exist, it is attained by the comonotonic coupling \citep{tchen1980inequalities, ruschendorf1983solution}. In contrast, for $d\geq 3$, no single coupling solves \eqref{LB} for every supermodular $\psi$; see \citet{puccetti2015extremal} for these results and their historical development. An important special case is $\psi(x_1, \dots, x_d)=f\left(\sum_{i=1}^d x_i\right)$ with $f$ convex, which arises in risk aggregation. The comonotonic coupling yields the upper bound, while a constant-sum coupling yields the Jensen lower bound whenever the marginals are jointly mixable, which is extended from complete mixability in the case of identical marginals \citep{wang2011complete, wang2016joint}. For aggregate-tail indicators $\psi(x_1, \dots, x_d)=\mathds{1}_{\{x_1+\cdots+x_d>x\}}$, $x\in\mathbb{R}$, bounds and sharpness results are given by \citet{embrechts2006bounds} and \citet{puccetti2013sharp}. A related conditional-tail functional is the marginal expected shortfall $\E[X_i\mid S>F^{-1}(p)]$, with $p\in(0, 1)$ and $S=\sum_{i=1}^dX_i\sim F$, for which \citet{chen2026static} derive sharp bounds. When analytical solutions are unavailable, \citet{puccetti2012computation} introduce the rearrangement algorithm for distributional bounds, based on the rearrangement representation of \citet{ruschendorf1983solution}, and \citet{puccetti2015computation} extend it to expected supermodular functionals.

The expectation of the product of random variables, that is,  $\psi(X_1, \dots, X_d):=\prod_{i=1}^{d}X_i$, is called a mixed moment. Mixed moments play an important role in multivariate analysis. In particular, they encode higher-order dependence beyond correlation, appear in cumulant expansions, and underlie commonly used measures of statistical association such as covariance, coskewness, and cokurtosis. The product expectation is therefore a basic building block for quantifying dependence in a multivariate random vector. It captures interaction effects that may not be visible through other measures of multivariate association. Mixed moments also arise in cumulant approximations of nonlinear portfolio functionals in risk management and finance. For example, the mean--variance efficient frontier \citep{markowitz1952portfolio} and the capital asset pricing model (CAPM) \citep{sharpe1964capital} rely on covariance to describe systematic risk. More recent studies have also focused on coskewness as a risk measure, defined as $\E[\prod_{i=1}^3\frac{X_i-\mu_i}{\sigma_i}],$ where $\mu_i$ and $\sigma_i$ are the marginal means and standard deviations of the marginal distributions, for $i \in \{1, 2, 3\}.$ Coskewness measures the comovement of three rvs: positive coskewness implies all rvs undergo extreme positive deviations, or one undergoes a positive deviation while the others undergo negative deviations. Negative coskewness implies all negative deviations, or that one undergoes positive deviations. \citet{harvey2000conditional} show that incorporating coskewness into asset pricing models helps explain the cross-sectional variation in expected returns, especially in settings where the traditional CAPM performs poorly.

Figure~\ref{fig:rolling-three-stock-coskewness} presents the coskewness of three financial institutions, JPMorgan Chase (JPM), Bank of America (BAC), and Citigroup (C), over time, computed using a 60-trading-day rolling window. The returns are calculated from adjusted closing prices from Yahoo Finance for January 2005 to December 2025. The observed series changes sign frequently and ranges from approximately $-2.5$ to $2.5$. Unlike Pearson correlation, which is universally bounded between $-1$ and $1$, raw coskewness has no such universal range. For each portfolio and each window, its feasible range over all dependence structures depends on the marginal distribution of the returns. The raw values therefore cannot be compared directly across different marginal distributions and portfolios or over time, nor do they reveal where the observed three-dimensional comovement lies within its feasible range. This distinction is economically relevant because, for a portfolio without short selling, smaller coskewness reduces the term involving all three assets in the portfolio's third central moment and therefore represents a more adverse dependence contribution. These limitations motivate our study of lower and upper bounds on product expectations. One of our main contributions, developed in Section~\ref{sec:coskewness-application}, is to use the corresponding lower and upper coskewness bounds to normalize the raw coskewness on the fixed interval $[-1, 1]$. The resulting standardized measure maps coskewness to a fixed range, allowing the analyst to meaningfully compare the third product moment across different marginal distributions and portfolios and over time.

\begin{figure}[t]
  \centering
  \includegraphics[width=\textwidth]{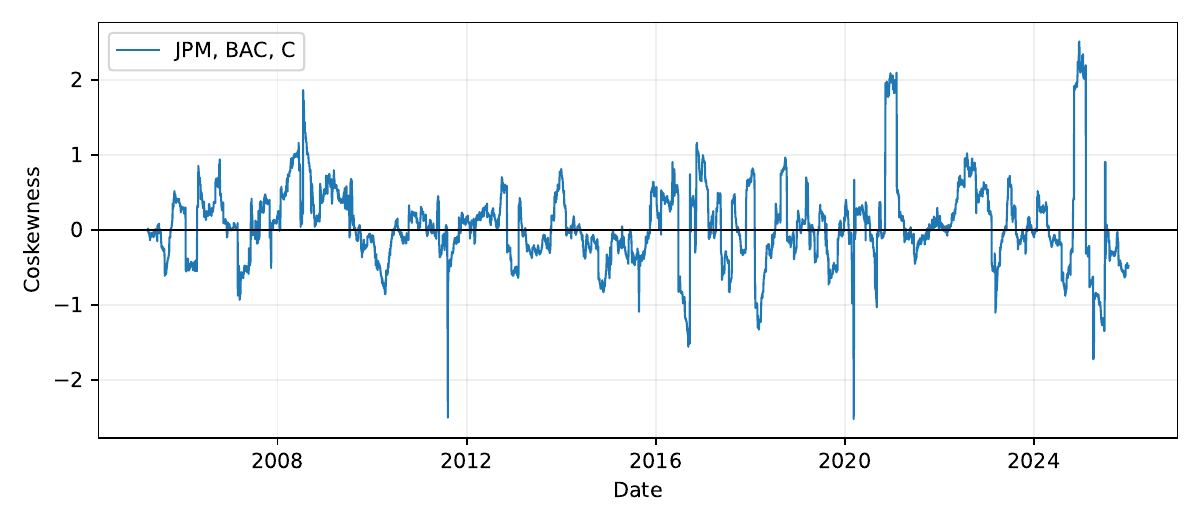}
  \caption{Rolling coskewness of daily returns on three financial institutions computed over 60-trading-day windows.}
  \label{fig:rolling-three-stock-coskewness}
\end{figure}

In this paper, we study sharp bounds on $\mathbb{E}[\prod_{i=1}^{d}X_i]$ when the marginal distributions $F_i$ are known but the dependence structure is unknown, i.e., when $\psi(X_1, \dots, X_d):=\prod_{i=1}^{d}X_i$ in the definitions of $m_d^\psi$ and $M_d^\psi$. Henceforth, $m_d^\times$ and $M_d^\times$ denote the solutions to the minimization and maximization problems, respectively. The problems of determining $m_d^{\times}$ and $M_d^{\times}$ fall under the classical theory of distributions with given marginals, also known as the Fr\'echet problems. The existence of couplings with prescribed marginals under considerable generality is guaranteed by \citet{strassen1965existence}, while a duality theory for optimizing $\int h \,\d\mu$ over a Fr\'echet class is developed by \citet{kellerer1984duality}. For $d=2$, the problems are classical, and the extremizers are given by the comonotonic and countermonotonic couplings. For $d\ge 3$, however, the problem becomes more delicate. The product of nonnegative variables is supermodular, and therefore the comonotonic coupling attains $M_d^{\times}$ \citep[see, e.g.,][]{shaked2007stochastic}. In contrast, there is no single broadly applicable multivariate analogue of countermonotonicity; see, for example, \cite{joag-dev1983negative}, \cite{puccetti2015extremal}, \cite{wang2015extreme, wang2016joint}, \cite{lauzier2023pairwise} and \cite{cossette2025extremal} for different notions of negative dependence. For $M_3^\times$ and $m_3^\times$ with $X_i\sim \U[0, 1]$, \citet{nelsen2012directional} obtain related bounds and connect them to multivariate dependence measures. For $X_i\sim \U[a_i, b_i]$ with $b_i>a_i\ge 0$, additional closed-form expressions for $m_d^{\times}$ and $M_d^{\times}$ are derived by \citet{bignozzi2015studying}. Once the marginals take both positive and negative values, determining $m_d^{\times}$ and $M_d^{\times}$ becomes substantially harder. For $m_d^{\times}$, \citet{wang2016joint} propose a solution when $(F_1, \cdots, F_d)$ is jointly mixable, a concept extending the complete mixability introduced by \citet{wang2011complete}.

More recently, \citet{bernard2023coskewness} provide sharp characterizations of $m_d^{\times}$ and $M_d^{\times}$, and identify their dependence structures, called cross-product copulas, for symmetric marginals with zero means. However, they do not explain why the randomizers in these copulas follow a Bernoulli distribution with success probability $0.5$. Moreover, the optimization problems for asymmetric marginals or symmetric distributions with non-zero means remain open.

In this paper, we complete the attainability analysis of the universal product bounds for general signed marginals. The difficulty is that if the marginals can take both positive and negative values, the product is generally neither supermodular nor submodular, so the standard dependence-order arguments used for many aggregation problems do not apply. We identify the missing structure through a sign-recombination mechanism. The key idea is that negative parts must be combined across coordinates so that the sign of the product is controlled. For example, in three dimensions, a negative realization in one coordinate should typically be matched with another negative realization if one wants to obtain a positive product, whereas one negative sign or three negative signs push the product in the opposite direction. The extremal coupling must therefore coordinate both signs and magnitudes. This sign-recombination mechanism yields a parity-polytope characterization of attainability and a construction of extremal couplings.

Motivated by the sharp characterization in \citet{bernard2023coskewness}, we decouple the problem of finding sharp bounds for the signed product into a magnitude part and a sign part. The magnitudes are nonnegative, so their maximizing coupling is governed by comonotonicity. The remaining question is whether the signs induced by the marginals can be recombined across coordinates so that the product is nonnegative (resp.\ nonpositive) at each magnitude level for $M_d^\times$ (resp.\ $m_d^\times$). This sign-compatibility problem is the main technical challenge in the general case, and it is also the key to understanding the structure of the extremal couplings. To resolve this problem, we formulate sign compatibility in terms of the even- and odd-parity polytopes. These polytopes also arise in polyhedral combinatorics and coding theory \citep{jeroslow1975defining, barman2013decomposition, ermel2020parity}. This formulation converts attainability of the universal product bounds into a pointwise polytope-membership problem, from which we derive sharpness conditions and construct extremal couplings. The resulting characterization is directly implementable from the marginals: at each magnitude level, one computes the conditional positive-sign probabilities and checks a finite collection of linear inequalities. Feasibility certifies that the universal bound is exact, and the corresponding polytope weights construct an extremal stress coupling.

Our paper makes the following contributions to the literature. First, we identify parity feasibility as the geometric structure governing the signed-product problem and use this formulation to establish necessary and sufficient conditions to attain the universal upper and lower bounds in arbitrary dimensions. We then use the feasible polytope weights to construct the maximizing and minimizing couplings explicitly. Second, for identically distributed marginals, we translate the general attainability criterion into explicit dimension-dependent conditions. We show that, as $d$ increases, the feasibility band widens and, when $d$ is even, $m_d^\times$ tolerates progressively more asymmetry. Third, we explicitly construct the maximizing and minimizing copulas that attain the universal upper and lower bounds for $M_3^\times$ and $m_3^\times$, respectively. This construction clarifies the role of the randomizers in these optimizing copulas. In particular, it explains why the break randomizers in the cross-product copulas of \citet{bernard2023coskewness} follow a Bernoulli distribution with success probability $0.5$ for symmetric marginals with zero means. Finally, we propose a recursive method for constructing the maximizing and minimizing copulas when $d\ge 4$.

The remainder of the paper is organized as follows. Section \ref{sec:notation} collects background material, notation, and the universal absolute-value bounds from the literature. Section \ref{sec:sharp-max} presents the parity-feasibility characterizations and the piecewise-affine selection constructions for both the upper and lower bounds. Section \ref{sec:iid} studies the case of identical marginals, deriving scalar threshold conditions and a sparsity bound on the number of active sign patterns. Section \ref{sec:d3} specializes both the upper and lower bound results to the trivariate case, while Section \ref{sec:recursive-parity} introduces the recursive method for constructing the maximizing and minimizing copulas and provides an illustrative example for $d=4$. Section \ref{sec:numerical} numerically verifies sharpness and illustrates the gaps that arise when the parity conditions fail. Section \ref{sec:coskewness-application} applies the trivariate results to coskewness under asymmetric return marginals. The final section concludes, and Appendix \ref{sec:examples} provides additional examples. Some longer proofs are deferred to Appendix \ref{app:proofs}.

\section{Background and preliminaries}\label{sec:notation}

We introduce notation, recall some known results, and develop the universal absolute-value bounds from \citet{bernard2023coskewness} that the remainder of the paper builds on.

\subsection{Preliminaries}

Let $(\Omega, \mathcal{F}, \mathbb{P})$ denote a standard atomless probability space, and let $\mathcal{X} := L^0(\Omega, \mathcal{F}, \mathbb{P})$ be the space of all real-valued random variables (rvs) defined on this space. 
We denote generic elements of $\mathcal{X}$ by $X, \ U, \ V, \ X_i,$ and $Y_i$, where $i \in [d] := \{1, 2, \ldots, d\}$ and $d \in \mathbb{N}\setminus \{1\}$. The notation $U\eqiid V$ means that $U$ and $V$ are independent and identically distributed. Unless stated otherwise, $U \sim \U[0, 1]$ is assumed to be independent of all other rvs. 
Throughout the paper, the notation $X \sim F_X$ indicates that the rv $X$ has cumulative distribution function (cdf) $F_X(x) = \mathbb{P}(X \le x)$, $x \in \mathbb{R}$. Moreover, $\bar F_X(x)=1-F_X(x)$ denotes the survival function of $F_X$. The left-continuous quantile, or generalized inverse function of $X$, for $u \in (0, 1)$, is defined by  
$$F_X^{-1}(u) := \inf\{x \in \mathbb{R} : F_X(x) \ge u\}.$$
This convention ensures that $F_X^{-1}(U)$ has cdf $F_X$. Let $\id_A$ denote the indicator function of an event $A$. Throughout, $F_i$ denotes the cdf of $X_i$, and we reserve the notation $G_i$ for the cdf of $Y_i:=|X_i|$.

Vectors are denoted in boldface, e.g., $\bm u=(u_1, \dots, u_d)$, and inequalities between vectors are understood coordinatewise. For a random vector $\bm X = (X_1, \ldots, X_d)$, we denote its joint cdf by  
$$F_{\bm X}(\bm x) = \mathbb{P}(\bm X \le \bm x), \text{ where } \bm x = (x_1, \ldots, x_d) \in \mathbb{R}^d.$$ Sklar's theorem \citep{sklar1959fonctions} states that the joint cdf $F_{\bm X}$ of a random vector $\bm X=(X_1, \dots, X_d)$ with marginal cdfs $F_i$, $i\in[d]$, can be expressed using a copula $C:[0, 1]^d\rightarrow[0, 1]$ as follows: $$F_{\bm X}(\bm x)=C(F_1(x_1), \dots, F_d(x_d)).$$ We use copulas to study dependence structures and refer the reader to \cite{nelsen2007introduction} and \cite{joe2014dependence} for detailed introductions to copulas. 

Throughout the paper, $\E[|X_1\cdots X_d|]<\infty$ when the product moment is discussed; a sufficient condition is $\E[|X_i|^d]<\infty$ for every $i\in [d]$ by H\"older's inequality. When $\E[|X_1\cdots X_d|]<\infty$, by Sklar’s Theorem \citep{sklar1959fonctions}, we have the representation
$$\E\left[X_1\cdots X_d\right]=\int_{[0, 1]^d}\ \prod_{i=1}^d F_i^{-1}(x_i)\d C(x_1, \dots, x_d),$$
which will be exploited later.

A random vector $\bm X = (X_1, \ldots, X_d)$ with $X_i \sim F_i$ for $i\in [d]$ is said to be comonotonic if there exists a single $U\sim\U[0, 1]$ such that
$$(X_1, \dots, X_d)\stackrel{\mathrm{d}}{=}\left(F_1^{-1}(U), \dots, F_d^{-1}(U)\right);$$
see Proposition 4.5 of \cite{denneberg1994nonadditive}.

In this paper, the central goal is to derive lower and upper bounds for the expected product of $d\geq 2$ rvs with known marginal distributions but unspecified dependence between them. That is, we consider the Fréchet class of all joint cdfs with fixed marginals. Specifically, we consider the problems
\begin{subequations}
	\label{sharp product bounds}
\begin{align}
m_d^{\times}  &:= \inf\left\{\E\left[X_1\cdots X_d\right]:\ X_i\sim F_i, \ i=1, \dots, d\right\}, \label{eq:md}\\
M_d^{\times}  &:= \sup\left\{\E\left[X_1\cdots X_d\right]:\ X_i\sim F_i, \ i=1, \dots, d\right\}.\label{eq:Md}
\end{align}
\end{subequations}

When $d=2$, the function $x_1 x_2$ is supermodular for all $(x_1, x_2) \in \mathbb{R}^2$, so the comonotonic coupling attains $M_2^{\times}$, with value $\E[F_1^{-1}(U)F_2^{-1}(U)]$, whereas the countermonotonic coupling attains $m_2^{\times}$, with value $\E[F_1^{-1}(U)F_2^{-1}(1-U)]$. For $d\ge3$, sharp bounds are much more delicate even for simple marginals. Explicit solution of either \eqref{eq:md} or \eqref{eq:Md} is largely open for symmetric marginals with non-zero means and the asymmetric case, which motivates the results of this paper. 

\subsection{Universal absolute-value bounds}\label{sec:absolute-bounds}
The following lemma follows from Theorem~2.1 in \citet{puccetti2015extremal}. The product function is supermodular on $\mathbb{R}_+^d$ and strictly supermodular on $(0, \infty)^d$; see Section~7.A.8 of \citet{shaked2007stochastic}. Since continuity of each $G_i$ implies $Y_i>0$ a.s., the strict case applies under the additional continuity and finiteness conditions in the lemma. We therefore omit the proof.

\begin{lemma}\label{lem:nonneg-product}
Let $Y_i\sim G_i$, $i\in [d]$, be any nonnegative rvs, and let $U\sim \U[0, 1]$. Then
\begin{equation}\label{eq:nonneg-upper}
\E\left[\prod_{i=1}^d Y_i\right] \le \E\left[\prod_{i=1}^d G_i^{-1}(U)\right]=\int_0^1 \prod_{i=1}^d G_i^{-1}(u)\d u.
\end{equation}
Moreover, equality holds if $(Y_1, \dots, Y_d)$ is a comonotonic vector, i.e., $Y_i=G^{-1}_i(U)$.
If in addition all $G_i$ are continuous and $\int_0^1 \prod_{i=1}^d G_i^{-1}(u) \d u<\infty$, then equality in \eqref{eq:nonneg-upper} implies $(Y_1, \dots, Y_d)$ is comonotonic.
\end{lemma}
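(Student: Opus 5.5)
The plan is to prove the inequality through a rearrangement-type comparison: write the product of nonnegative variables as an integral of indicators, and compare each indicator term with its comonotonic value via a Fréchet upper bound. Then treat equality and the converse separately.

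\textbf{Step 1 (layer-cake representation).} For nonnegative $Y_i$, write $Y_i=\int_0^\infty \id_{\{Y_i>t_i\}}\d t_i$. Since all terms are nonnegative, Tonelli's theorem gives
\begin{equation*}
\E\Big[\prod_{i=1}^d Y_i\Big]=\int_{[0,\infty)^d}\p\big(Y_1>t_1,\dots,Y_d>t_d\big)\d t_1\cdots\d t_d .
\end{equation*}

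\textbf{Step 2 (pointwise bound).} For every $\bm t$, the Fréchet upper bound gives
\[
\p(Y_1>t_1,\dots,Y_d>t_d)\le \min_i \bar G_i(t_i).
\]
The right-hand side is exactly $\p(G_1^{-1}(U)>t_1,\dots,G_d^{-1}(U)>t_d)$. To see this, use $G_i^{-1}(u)>t \iff u>G_i(t)$, so the joint event is $\{U>\max_i G_i(t_i)\}$. Integrating and applying Step~1 to the comonotonic vector gives \eqref{eq:nonneg-upper}, including when the right side is infinite. The identity $\E[\prod_i G_i^{-1}(U)]=\int_0^1\prod_i G_i^{-1}(u)\d u$ holds because $U$ is uniform, and equality for the comonotonic vector is immediate. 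This avoids invoking supermodular-order theory, although one could also cite \citet{tchen1980inequalities}.

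\textbf{Step 3 (converse).} Assume each $G_i$ is continuous, the integral is finite, and equality holds. The two integrands in Step~1 are then ordered pointwise and have the same finite integral. Hence they agree for Lebesgue-a.e.\ $\bm t$, so
\[
\p(Y_1>t_1,\dots,Y_d>t_d)=\min_i\bar G_i(t_i)\quad\text{a.e.}
\]
Both sides are right-continuous in each $t_i$: the left side by monotone convergence, the right side by continuity of the $G_i$. Equality on a full-measure set therefore extends to all $\bm t$. This needs some care, and I would approximate from the upper right along a dense set.

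Next, take any pair $i\neq j$ and fix $t_i,t_j$, letting the other $t_k\downarrow 0$. Continuity of $G_k$ gives $G_k(0)=0$ and $\bar G_k(0)=1$, so the bivariate survival function of $(Y_i,Y_j)$ equals $\min(\bar G_i,\bar G_j)$, the upper Fréchet bound. This means $(Y_i,Y_j)$ is comonotonic. Pairwise comonotonicity implies comonotonicity of the whole vector (Dhaene et al.). Alternatively, the joint survival function equals $\min_i \bar G_i(t_i)$ at all points, which is the survival function of $(G_i^{-1}(U))_i$; this determines the law directly, since joint survival functions on $[0,\infty)^d$ characterize distributions.

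\textbf{Main obstacle.} The delicate step is Step~3. Finiteness is needed to cancel the integrals, and passing from almost-everywhere to everywhere equality relies on continuity. I expect this bookkeeping, rather than the inequality itself, to be the main difficulty.
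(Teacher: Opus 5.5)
Your proof is correct, and it takes a genuinely different route from the paper. The paper does not prove the lemma directly. It observes that $\prod_i x_i$ is supermodular on $\R_+^d$ and strictly supermodular on $(0,\infty)^d$, and then quotes the general result (Theorem~2.1 of \citet{puccetti2015extremal}). That result says the comonotonic coupling maximizes $\E[\psi(\bm Y)]$ for supermodular $\psi$, with uniqueness in the strictly supermodular case when the value is finite. Continuity of the $G_i$ is invoked only to guarantee $Y_i>0$ a.s., so that the vector lives where strict supermodularity holds.

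You instead specialize to the product and argue from scratch:
\begin{itemize}
\item the layer-cake identity writes $\E[\prod_i Y_i]$ as the integral of the joint survival function;
\item the Fr\'echet--Hoeffding upper bound gives the inequality in $[0,\infty]$, so the infinite case comes for free;
\item in the equality case, finiteness forces $\p(\bm Y>\bm t)=\min_i\bar G_i(t_i)$ a.e.\ and then everywhere, which gives pairwise and hence joint comonotonicity.
\end{itemize}
The citation is shorter and places the lemma within supermodular-ordering theory. Your argument is elementary and self-contained, and it shows exactly where the extra hypotheses are used. Finiteness lets you cancel the two integrals. Continuity enters only through $G_k(0)=0$, which is what lets you remove the coordinates $k\neq i,j$ by setting $t_k=0$. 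That use is essential for $d\ge3$: if some $Y_k$ had an atom at $0$, the other coordinates of the comonotonic coupling could be recoupled arbitrarily on $\{Y_k=0\}$. This changes neither the marginals nor $\E[\prod_i Y_i]$, so equality would no longer force comonotonicity.

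Two minor precisions. First, the a.e.-to-everywhere step needs only right-continuity of the $\bar G_i$, not continuity of $G_i$. What it does need is joint continuity from above of $\bm t\mapsto\p(\bm Y>\bm t)$, which you get by choosing approximating points in the positive-measure boxes $\prod_i(t_i,t_i+1/n)$. Second, in your alternative ending, the survival function on $[0,\infty)^d$ determines the law only for distributions that put no mass on the hyperplanes $\{y_i=0\}$. This holds here precisely because $\p(Y_i=0)=G_i(0)=0$.
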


We now turn to signed products, where the pointwise inequalities $-\prod_{i=1}^d |X_i|\le \prod_{i=1}^d X_i\le \prod_{i=1}^d |X_i|$ combine with Lemma~\ref{lem:nonneg-product} to produce universal two-sided bounds. The bounds and sufficient conditions for equality are derived in Lemmas~3.1 and~3.2 of \citet{bernard2023coskewness}; the necessary equality conditions follow by combining these pointwise inequalities with Lemma~\ref{lem:nonneg-product}.

\begin{lemma}\label{lem:product-bounds}
Let $X_i\sim F_i$ be such that $|X_i|\sim G_i$, for $i\in [d]$, and let $U\sim \U[0, 1]$. Then
\begin{align}
 M_d^{\times} &\le \E\left[\prod_{i=1}^d G_i^{-1}(U)\right],\label{eq:upper-bound}\\
 m_d^{\times} &\ge -\E\left[\prod_{i=1}^d G_i^{-1}(U)\right].\label{eq:lower-bound}
\end{align}
Moreover, equality holds in \eqref{eq:upper-bound} (resp.\ \eqref{eq:lower-bound}) if $|X_1|, \dots, |X_d|$ are comonotonic and $\prod_{i=1}^d X_i\ge 0$ (resp.\ $\prod_{i=1}^d X_i\le 0$) a.s.

If the right-hand side of \eqref{eq:upper-bound} is finite, then equality in \eqref{eq:upper-bound} (resp.\ \eqref{eq:lower-bound}) implies that any coupling $(X_1, \dots, X_d)$ attaining $M_d^{\times}$ (resp.\ $m_d^{\times}$) has a nonnegative (resp.\ nonpositive) product a.s.\ and satisfies
$$\E\left[\prod_{i=1}^d |X_i|\right]=\E\left[\prod_{i=1}^d G_i^{-1}(U)\right].$$
If, in addition, all $G_i$ are continuous, then the absolute values $|X_1|, \dots, |X_d|$ in any such coupling are comonotonic.
\end{lemma}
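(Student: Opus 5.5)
The plan is to reduce everything to Lemma~\ref{lem:nonneg-product} through the pointwise sandwich
$$-\prod_{i=1}^d|X_i|\;\le\;\prod_{i=1}^d X_i\;\le\;\prod_{i=1}^d|X_i|.$$

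\textbf{Step 1: the bounds.} Fix any coupling with $X_i\sim F_i$ and set $Y_i:=|X_i|\sim G_i$. The $Y_i$ are nonnegative, so Lemma~\ref{lem:nonneg-product} gives
$$\E\Bigl[\prod_{i=1}^d Y_i\Bigr]\le \E\Bigl[\prod_{i=1}^d G_i^{-1}(U)\Bigr].$$
Take expectations in the sandwich, which is legitimate whenever $\E[|X_1\cdots X_d|]<\infty$. This yields
$$-\E\Bigl[\prod_{i=1}^d G_i^{-1}(U)\Bigr]\le \E\Bigl[\prod_{i=1}^d X_i\Bigr]\le \E\Bigl[\prod_{i=1}^d G_i^{-1}(U)\Bigr].$$
Taking the supremum and the infimum over couplings gives \eqref{eq:upper-bound} and \eqref{eq:lower-bound}. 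If the right-hand side is $+\infty$, both bounds are trivial.

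\textbf{Step 2: sufficiency.} Suppose $|X_1|,\dots,|X_d|$ are comonotonic and $\prod_{i=1}^d X_i\ge 0$ a.s. Then $\prod_{i=1}^d X_i=\prod_{i=1}^d|X_i|$ a.s. By the equality case of Lemma~\ref{lem:nonneg-product}, its expectation equals $\E[\prod_{i=1}^d G_i^{-1}(U)]$. This coupling therefore attains the upper bound, so the bound equals $M_d^\times$. The lower-bound case is symmetric, using $\prod_{i=1}^d X_i=-\prod_{i=1}^d|X_i|$.

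\textbf{Step 3: necessity.} Assume $B:=\E[\prod_{i=1}^d G_i^{-1}(U)]<\infty$. Take a coupling attaining $M_d^\times=B$; the lower case is handled by replacing $X_1$ with $-X_1$. Chain the inequalities from Step 1:
$$B=\E\Bigl[\prod_{i=1}^d X_i\Bigr]\le \E\Bigl[\prod_{i=1}^d |X_i|\Bigr]\le B.$$
Both inequalities are therefore equalities. The first equality says $\E\bigl[\prod_{i=1}^d|X_i|-\prod_{i=1}^d X_i\bigr]=0$. The integrand is nonnegative with finite expectation, so it vanishes a.s. Hence $\prod_{i=1}^d X_i=\prod_{i=1}^d|X_i|\ge0$ a.s. The second equality is the stated identity $\E[\prod_{i=1}^d|X_i|]=B$.

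Under continuity of all $G_i$, the strict part of Lemma~\ref{lem:nonneg-product} applies, since $\int_0^1\prod_{i=1}^d G_i^{-1}(u)\,\d u=B<\infty$. It then forces $(|X_1|,\dots,|X_d|)$ to be comonotonic.

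\textbf{Main obstacle.} The only delicate point is integrability. The "nonnegative integrand with zero mean vanishes a.s." argument needs finiteness, which is exactly why the statement assumes $B<\infty$. This also ensures $\E[\prod_{i=1}^d X_i]$ is well defined for every coupling. The remaining steps are direct applications of Lemma~\ref{lem:nonneg-product}.
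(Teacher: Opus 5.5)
Your proposal is correct and follows essentially the same route as the paper. It uses the pointwise sandwich together with Lemma~\ref{lem:nonneg-product} for the bounds and for sufficiency, and for necessity it uses the equality chain, the vanishing of a nonnegative zero-mean integrand, and the strict part of Lemma~\ref{lem:nonneg-product}. The only cosmetic difference is that you handle the lower bound by reflecting $X_1\mapsto -X_1$, which leaves $G_1$ and hence the bound unchanged, whereas the paper works directly with the nonnegative variable $\prod_{i=1}^d|X_i|+\prod_{i=1}^d X_i$.
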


The necessary conditions in Lemmas~\ref{lem:nonneg-product} and~\ref{lem:product-bounds} provide the equality characterizations used throughout the remainder of the paper: when all $G_i$ are continuous and the universal bound is finite, attaining either signed-product bound forces the absolute values to be comonotonic. They narrow the attainability question from a search over all copulas to a finite-dimensional sign-coordination problem that we characterize in the next section.

\section{Sharp product bounds}\label{sec:sharp-max}
In this section, we present our main result. Lemma~\ref{lem:product-bounds} shows that, under its continuity and finiteness conditions, any coupling attaining the universal upper (resp.\ lower) bound must have comonotonic absolute values and a nonnegative (resp.\ nonpositive) product a.s. In both cases, after fixing $|X_i|=G_i^{-1}(U)$ for $i\in [d]$, the remaining issue is so-called sign compatibility. That is, whether, for each quantile level $u$, the marginal sign biases can be realized by a distribution supported on even-parity sign vectors for the upper bound and on odd-parity sign vectors for the lower. We address the upper bound first and then develop the odd-parity counterpart.

Recall that the right-hand side of \eqref{eq:upper-bound} is the value for the magnitudes under comonotonicity between $|X_i|$. The main result of this section identifies exactly when $M_d^{\times}$ and $m_d^{\times}$ are sharp and uses a measurable selection argument to turn pointwise parity feasibility into an actual maximizing and minimizing coupling.

\subsection{Sign bias and parity polytope}\label{subsec:sign-bias}

The universal upper bound of $M_d^{\times}$ in \eqref{eq:upper-bound} couples absolute values comonotonically, but says nothing about the signs that each marginal takes. Whether the bound is attainable depends on whether, at every quantile level $u$, the marginal sign combinations can be realized simultaneously.

\begin{assumption}\label{ass:regularity}
For each $i\in[d]$, the marginal $F_i$ is absolutely continuous with connected support and density $f_i$ that is positive on the interior of the support of $F_i$, and $\E\left[|X_i|^d\right]<\infty$ for $X_i\sim F_i$.
\end{assumption}

For $u\in[0, 1]$, define the function
\begin{equation}\label{eq:Hi-def}
H_i(u) := \p\bigl(X_i\ge 0, G_i(|X_i|)\le u\bigr).
\end{equation}
We define the sign bias $p_i(u)$ as the a.e.\ derivative of $H_i(u)$. Under Assumption~\ref{ass:regularity}, $F_i$ has density $f_i$, hence $\p(X_i=0)=0$, and the sign of $X_i$ is well-defined a.s.

\begin{lemma}\label{lem:sign-bias}
Under Assumption~\ref{ass:regularity}, the function $H_i$ defined in \eqref{eq:Hi-def} is increasing and $1$-Lipschitz on $[0, 1]$, hence absolutely continuous. Its derivative $p_i\in[0, 1]$ satisfies, for a.e.\ $u\in(0, 1)$,
\begin{equation}\label{eq:sign-bias}
p_i(u) = \frac{f_i(G_i^{-1}(u))}{f_i(G_i^{-1}(u))+f_i(-G_i^{-1}(u))}.
\end{equation}
\end{lemma}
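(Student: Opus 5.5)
The plan is to express $H_i$ through the law of $Y_i=|X_i|$ and its positive part, and then differentiate.

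\textbf{Monotonicity and the Lipschitz bound.} Under Assumption~\ref{ass:regularity}, $Y_i$ has density $g_i(y)=f_i(y)+f_i(-y)$ for $y>0$. Hence $G_i$ is continuous, and $V_i:=G_i(Y_i)\sim\U[0,1]$. Since $H_i(u)=\p(X_i\ge0,\,V_i\le u)$, $H_i$ is increasing in $u$. For $u<v$,
\[
0\le H_i(v)-H_i(u)\le \p(u<V_i\le v)=v-u ,
\]
so $H_i$ is $1$-Lipschitz and therefore absolutely continuous. Its a.e.\ derivative $p_i$ then exists and lies in $[0,1]$.

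\textbf{The formula.} I will rewrite the event $\{G_i(Y_i)\le u\}$ as $\{Y_i\le G_i^{-1}(u)\}$ up to a null set. This holds because $G_i$ is continuous: $G_i(y)\le u$ iff $y\le \sup\{z:G_i(z)\le u\}$. The supremum can differ from $G_i^{-1}(u)$ only when $G_i$ is flat on an interval. Connected support with a positive density makes $G_i$ strictly increasing on $(0,\sup|{\rm supp}|)$, except possibly on an initial segment $[0,a)$ when the support of $F_i$ excludes a neighbourhood of $0$. Such flat pieces carry zero $Y_i$-mass, so the two events coincide a.s. This gives
\[
H_i(u)=\int_0^{G_i^{-1}(u)} f_i(y)\,\d y=:\Phi_i(G_i^{-1}(u)),
\]
with $\Phi_i(t)=\p(0\le X_i\le t)$. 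I would also set $u=G_i(t)$ and write $H_i(G_i(t))=\Phi_i(t)$.

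\textbf{Differentiation.} For a.e.\ $t$, $\Phi_i'(t)=f_i(t)$ and $G_i'(t)=f_i(t)+f_i(-t)$ by Lebesgue differentiation. I will apply the chain rule at points $u=G_i(t)$ where $G_i$ is differentiable with positive derivative and $H_i$ is differentiable; there
\[
p_i(u)G_i'(t)=f_i(t),
\]
which yields \eqref{eq:sign-bias} with $t=G_i^{-1}(u)$.

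\textbf{Main obstacle.} The hard step is the measure-theoretic bookkeeping: showing that the image under $G_i$ of the exceptional $t$-set is Lebesgue-null in $u$. I plan to handle this with the change of variables $u=G_i(t)$. Because $G_i$ is absolutely continuous, it maps null sets to null sets (Lusin's N property). The set where $G_i'=0$ has an image of measure $\int_{\{G_i'=0\}}G_i'=0$. So for a.e.\ $u$, the point $t=G_i^{-1}(u)$ is a good point with $G_i'(t)=f_i(t)+f_i(-t)>0$, and the denominator in \eqref{eq:sign-bias} is nonzero.

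As an alternative route that avoids this, I can verify $H_i(u)=\int_0^u q_i(s)\,\d s$ directly, where $q_i$ is the right-hand side of \eqref{eq:sign-bias}. The substitution $s=G_i(y)$ gives $\int_0^{G_i^{-1}(u)}\frac{f_i(y)}{g_i(y)}g_i(y)\,\d y=\Phi_i(G_i^{-1}(u))$. Uniqueness of a.e.\ derivatives of absolutely continuous functions then finishes the proof, and I would present this second route as the main argument.
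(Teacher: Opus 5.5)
Your proposal is correct and, with your second route as the main argument, it is essentially the paper's proof: the $1$-Lipschitz bound from $G_i(|X_i|)\sim\U[0,1]$, the identity $H_i(u)=\int_0^{G_i^{-1}(u)}f_i(y)\,\d y$, and the change of variables $y=G_i^{-1}(r)$ that rewrites this as $\int_0^u q_i(r)\,\d r$ before differentiating. Your explicit handling of possible flat pieces of $G_i$ (when the support of $F_i$ avoids a neighbourhood of $0$) is a small extra piece of care that the paper leaves implicit.
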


\begin{proof}
For $0\le a<b\le 1$, we have
$$0 \le H_i(b)-H_i(a) = \p\bigl(X_i\ge 0, a<G_i(|X_i|)\le b\bigr) \le \p\bigl(a<G_i(|X_i|)\le b\bigr) = b-a,$$
where the final equality uses $G_i(|X_i|)\sim \U[0, 1]$ under Assumption~\ref{ass:regularity}. Then $H_i$ is increasing and $1$-Lipschitz. Thus it is absolutely continuous with an a.e.\ derivative $p_i\in[0, 1]$ satisfying $H_i(u)=\int_0^u p_i(r)\d r$.

Since $F_i$ has density $f_i$, the rv $|X_i|$ has density $g_i(y)=f_i(y)+f_i(-y)$ on $(0, \infty)$, and $\p(X_i\ge 0, |X_i|\in B)=\int_B f_i(y)\d y$ for every Borel $B\subset(0, \infty)$. Therefore
$$H_i(u)=\p(X_i\ge 0, |X_i|\le G_i^{-1}(u))=\int_0^{G_i^{-1}(u)} f_i(y)\d y.$$
Since the support of $F_i$ is an interval and $f_i$ is positive on its interior by Assumption~\ref{ass:regularity}, the support of $|X_i|$ is also an interval and $g_i(y)=f_i(y)+f_i(-y)$ is positive on its interior. Hence $G_i$ is strictly increasing on its support and the derivative $\partial G_i^{-1}(r)=1/g_i(G_i^{-1}(r))$ for a.e.\ $r$. Changing variables with $y=G_i^{-1}(r)$, we obtain
$$H_i(u)=\int_0^u \frac{f_i(G_i^{-1}(r))}{f_i(G_i^{-1}(r))+f_i(-G_i^{-1}(r))}\d r,$$
and \eqref{eq:sign-bias} follows by differentiation. Since the density $f_i$ may be chosen to be Borel-measurable and the quantile function $G_i^{-1}$ is left-continuous (hence Borel), the right-hand side of \eqref{eq:sign-bias} provides a Borel-measurable representative of $p_i(u)$.
\end{proof}

Although the formal development requires only that $p_i(u)$ is the a.e.\ derivative of $H_i(u)$, the interpretation of the function $p_i(u)$ as $\p(X_i\ge 0\mid |X_i|=G_i^{-1}(u))$ is very useful. This interpretation motivates us to introduce some geometric concepts to address the attainability of the universal upper (resp.\ lower) bound in \eqref{eq:upper-bound} (resp.\ \eqref{eq:lower-bound}).

\begin{definition}[Parity Sign Set]
Let $\bm s=(s_1, \dots, s_d)$ be a random vector with $s_i\in \{-1, +1\}$ for $i\in [d]$. The even-parity and odd-parity sign sets, denoted by $E_d$ and $O_d$, are defined as
$$E_d := \left\{\bm s\in\{-1, +1\}^d:\ \prod_{i=1}^d s_i=+1\right\}$$
and $$O_d := \left\{\bm s\in\{-1, +1\}^d:\ \prod_{i=1}^d s_i=-1\right\},$$
respectively.
\end{definition}Clearly,
$|E_d|=|O_d|=2^{d-1}.$ For example, by omitting the number 1 (only looking at the signs), we have
\begin{equation}\label{eq: travariate even sign set}
    E_3=\{(+, +, +), (+, -, -), (-, +, -), (-, -, +)\},
\end{equation}and 
\begin{equation}\label{eq: travariate odd sign set}
    O_3=\{(+, +, -), (+, -, +), (-, +, +), (-, -, -)\}.
\end{equation}Moreover, each element of $O_3$ can be obtained from the corresponding element of $E_3$ by flipping exactly one sign, i.e., $(s_1, s_2, s_3)\in E_3$ implies that $(s_1, s_2, -s_3)\in O_3.$
For example, $(+1, +1, +1)\in E_3$ and $(+1, +1, -1)\in O_3.$

\begin{definition}[Parity Polytope]
For each $\bm s\in\{-1, +1\}^d$ define its indicator vector
$$v(\bm s) := \left(\id_{\{s_1=+1\}}, \dots, \id_{\{s_d=+1\}}\right)\in\{0, 1\}^d.$$
The even-parity and odd-parity polytopes are respectively
\begin{equation}\label{eq:parity-polytope}
P_d^+ := \mathrm{conv}\left\{v(\bm s): \bm s\in E_d\right\} \subset [0, 1]^d,
\end{equation}
and
\begin{equation}\label{eq:odd-parity-polytope}
P_d^- := \conv\left\{v(\bm s): \bm s\in O_d\right\} \subset [0, 1]^d.
\end{equation}
\end{definition}

Parity polytopes encode binary vectors subject to an even- or odd-cardinality constraint. In polyhedral combinatorics, their linear descriptions allow parity restrictions to be incorporated into optimization models \citep{jeroslow1975defining, ermel2020parity}; in coding theory, they are used in linear-programming decoding, where efficient projection onto a parity polytope enables scalable algorithms \citep{barman2013decomposition}. In our setting, the vertices represent sign patterns whose product has a prescribed sign, while a point in the polytope records the marginal probabilities of positive signs. Membership of the sign probability vector to a parity polytope therefore determines whether they can be combined into a joint sign distribution attaining the upper or lower product bound, and the corresponding weights provide the mixing probabilities used to construct an extremal coupling. 

Similarly, following \citet{jeroslow1975defining} and \citet{ermel2020parity}, it is convenient to represent $P_d^+$ and $P_d^-$ using forbidden-set inequalities. This representation turns polytope membership into a finite system of linear inequalities and yields the explicit trivariate feasibility conditions in Section~\ref{sec:d3}. For $\bm p=(p_1, \dots, p_d)\in[0, 1]^d$, the inequalities take the form
\begin{equation}\label{eq:parity-forbidden-set}
\sum_{i\notin F}(1-p_i)+\sum_{i\in F}p_i\ge1.
\end{equation}
Then $\bm p\in P_d^+$ iff \eqref{eq:parity-forbidden-set} holds for every odd-cardinality set $F\subseteq[d]$, whereas $\bm p\in P_d^-$ iff it holds for every even-cardinality set $F\subseteq[d]$.

If $\bm s =(+1, +1, -1)$, we have $v(\bm s) = \left(1, 1, 0\right).$ Note that each $p_i(u)$ is the probability that $X_i$ is positive given its absolute value rank $u$. A sign pattern $\bm s\in E_d$ assigns a sign $s_i\in\{-1, +1\}$ to every coordinate. The polytope $P_d^+$ collects all vectors $\left(p_1, \dots, p_d\right)\in[0, 1]^d$ that can arise as the probabilities $p_i=\p(s_i=+1)$ under some distribution on even-parity sign patterns $\bm s\in E_d$. In other words, $\bm p_u:=\left(p_1(u), \dots, p_d(u)\right)\in P_d^+$ when the individual sign biases $p_1(u), \dots, p_d(u)$ are jointly compatible with a distribution supported on even-parity sign vectors, so that at each magnitude level $u$ the signs can be coordinated to produce a nonnegative product while keeping the absolute values comonotonic. The interpretation of the odd-parity polytope $P_d^-$ is similar to $P_d^+$. Figure~\ref{fig:parity-d3} illustrates the trivariate even- and odd-parity polytopes and their intersection. Specifically, a sign-bias vector $\boldsymbol{p}_u=(p_1(u), p_2(u), p_3(u))$ admits both an even-parity and an odd-parity representation iff it lies in~$P_3^+\cap P_3^-$ in Figure~\ref{fig:parity-d3} (c).

\begin{figure}[ht]
\centering
\begin{subfigure}[b]{0.31\textwidth}
  \centering
  \includegraphics[width=\textwidth]{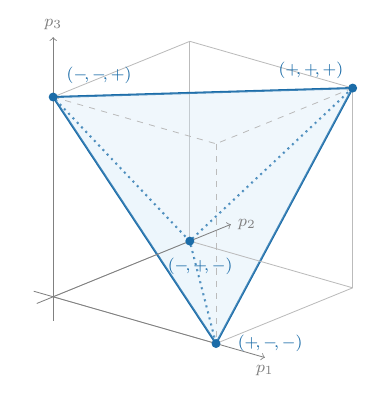}
  \caption{Even-parity polytope $P_3^+$.}
\end{subfigure}
\hfill
\begin{subfigure}[b]{0.31\textwidth}
  \centering
  \includegraphics[width=\textwidth]{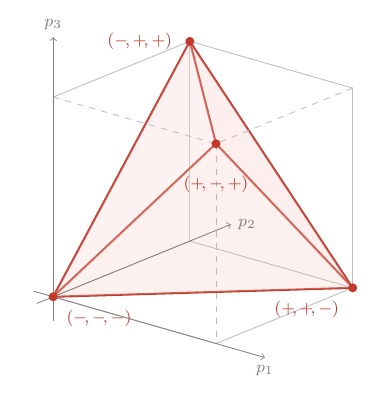}
  \caption{Odd-parity polytope $P_3^-$.}
\end{subfigure}
\hfill
\begin{subfigure}[b]{0.31\textwidth}
  \centering
  \includegraphics[width=\textwidth]{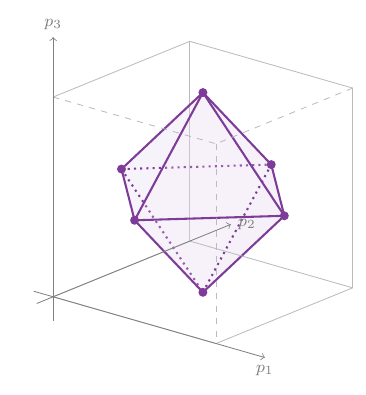}
  \caption{Intersection $P_3^+\cap P_3^-$.}
\end{subfigure}
\caption{Parity polytopes in dimension $d=3$, shown inside the unit cube $[0, 1]^3$. The even-parity polytope $P_3^+=\mathrm{conv}\{(1, 1, 1), (1, 0, 0), (0, 1, 0), (0, 0, 1)\}$ (left, blue) and the odd-parity polytope $P_3^-=\mathrm{conv}\{(1, 1, 0), (1, 0, 1), (0, 1, 1), (0, 0, 0)\}$ (center, red) are congruent regular tetrahedra whose eight vertices together are exactly $\{0, 1\}^3$. In the left and center panels, each tetrahedron vertex is labelled by its sign pattern $\boldsymbol{s}$. Their intersection $P_3^+\cap P_3^-$ (right, purple) is a regular octahedron.}
\label{fig:parity-d3}
\end{figure}

\subsection{Parity feasibility}\label{subsec:parity-feasible}

The equivalences that will be required in Theorem~\ref{thm:attainability} and \ref{thm:lower-attainability} below require that the pointwise feasibility condition $\bm p_u\in P_d^+$ and $\bm p_u\in P_d^-$ for a.e.\ $u$ imply the existence of a Borel-measurable family of weights $u\mapsto w_{\bm s}(u)$ selecting feasible sign-pattern probabilities on the polytopes, respectively. The weights $w_{\bm s}(u)$ represent the probability assigned to each even-parity or odd-parity sign vector $\bm s$ at level $u$, and the constraints on $w_{\bm s}$ ensure that the marginal sign biases are matched. Intuitively, these weights randomize over parity-compatible sign patterns while matching, in each coordinate, the marginal sign bias $p_i(u)$ required at magnitude level $u$. The proof of the following result is in Appendix \ref{app:proofs}.

\begin{lemma}\label{lem:measurable-selection}
Let $\bm p_u:(0, 1)\to[0, 1]^d$ be Borel-measurable, and write $\bm p_u=(p_1(u), \dots, p_d(u))$.
\begin{enumerate}[(1)]
\item If $\bm p_u\in P_d^+$ for a.e.\ $u\in(0, 1)$, then there exist Borel-measurable functions $w_{\bm s}:(0, 1)\to\R$, indexed by $\bm s\in E_d$, such that, for a.e.\ $u$,
$$w_{\bm s}(u)\ge 0, \quad \sum_{\bm s\in E_d} w_{\bm s}(u)=1, \quad \sum_{\bm s\in E_d, \, s_i=+1} w_{\bm s}(u)=p_i(u)\quad\text{for all } i\in[d].$$
\item\label{it:meas-odd} If $\bm p_u\in P_d^-$ for a.e.\ $u\in(0, 1)$, then there exist Borel-measurable functions $w_{\bm s}:(0, 1)\to\R$, indexed by $\bm s\in O_d$, such that, for a.e.\ $u$,
$$w_{\bm s}(u)\ge 0, \quad \sum_{\bm s\in O_d} w_{\bm s}(u)=1, \quad \sum_{\bm s\in O_d, \, s_i=+1} w_{\bm s}(u)=p_i(u)\quad\text{for all } i\in[d].$$
\end{enumerate}
\end{lemma}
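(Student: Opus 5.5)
The plan is to build the weights by an explicit piecewise-affine selection rather than a general measurable-selection theorem, so that Borel measurability is automatic. We treat part (1); part (2) follows by the same argument with $O_d$ in place of $E_d$, or by a reduction. Flipping the last coordinate, $\bm s\mapsto(s_1,\dots,s_{d-1},-s_d)$, is a bijection $E_d\to O_d$. It acts on indicator vectors by the affine involution $\phi(\bm p)=(p_1,\dots,p_{d-1},1-p_d)$, which maps $P_d^+$ onto $P_d^-$. Hence if $\bm p_u\in P_d^-$, then $\phi(\bm p_u)\in P_d^+$ is Borel. The weights from part (1) for $\phi(\bm p_u)$, transported through the bijection, give the required odd-parity weights; the $d$-th marginal constraint transforms correctly because $\sum_{s_d=-1}w=1-(1-p_d)=p_d$.

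For part (1), let $N\subset(0,1)$ be the null set where $\bm p_u\notin P_d^+$. On $N$ we define the weights arbitrarily but measurably, say $w_{(+,\dots,+)}=1$. Since $P_d^+$ is closed and $\bm p_u$ is Borel, $N$ is a Borel set. Off $N$, we need a Borel map $\sigma:P_d^+\to\Delta(E_d)$ with $A\sigma(\bm p)=\bm p$. Here $A$ is the $d\times 2^{d-1}$ matrix with columns $v(\bm s)$, and the first constraint row is $\sum w=1$. Then $w(u)=\sigma(\bm p_u)$ is Borel as a composition.

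To construct $\sigma$, we use a fixed triangulation. Choose a triangulation $\mathcal T$ of the polytope $P_d^+$ using only its vertices, for example a pulling or placing triangulation. Totally order the simplices as $T_1,\dots,T_K$, and set $B_k=T_k\setminus\bigcup_{j<k}T_j$. The $B_k$ are Borel (differences of closed sets) and partition $P_d^+$. On $T_k$, whose vertices $v(\bm s^{k,0}),\dots,v(\bm s^{k,m})$ are affinely independent, the barycentric coordinates $\lambda^k(\bm p)$ are unique, affine in $\bm p$, and nonnegative on $T_k$. Define $\sigma_{\bm s}(\bm p)=\lambda^k_j(\bm p)$ if $\bm p\in B_k$ and $\bm s=\bm s^{k,j}$, and $0$ otherwise. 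Each $\sigma_{\bm s}$ is a finite sum of affine functions times indicators of Borel sets, hence Borel. By construction it satisfies nonnegativity, $\sum_{\bm s}\sigma_{\bm s}=1$, and $\sum_{\bm s} \sigma_{\bm s}v(\bm s)=\bm p$. The last identity is exactly $\sum_{s_i=+1}\sigma_{\bm s}=p_i$ coordinatewise.

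The only substantive point is the existence of a triangulation of a polytope using only its vertex set, which is a standard fact from Carathéodory or placing triangulations. We also need affinely independent vertex subsets, so that barycentric coordinates are affine functions. If one prefers to avoid triangulations, there is an alternative. Take $\sigma(\bm p)$ to be the lexicographically minimal solution of the LP $\{w\ge0, Aw=\bm p, \sum w=1\}$, or the unique minimizer of $\|w\|_2^2$ over that set. The strictly convex quadratic minimizer is a continuous function of $\bm p$ on $P_d^+$, because the feasible-set correspondence is continuous there: it is a polyhedral correspondence with nonempty values exactly on $P_d^+$, hence Hausdorff-continuous. That gives Borel measurability directly. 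I would present the triangulation argument, since it matches the paper's ``piecewise-affine selection'' description and makes the weights explicit.
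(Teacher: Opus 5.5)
Your proposal is correct and follows essentially the same route as the paper: triangulate $P_d^+$ using only its vertices, use the affine barycentric coordinates on each simplex as sign-pattern weights, compose with the Borel map $u\mapsto\bm p_u$, and make a fixed choice on the Borel null set where $\bm p_u\notin P_d^+$. The differences are minor. Your disjointification $B_k=T_k\setminus\bigcup_{j<k}T_j$ gives a Borel rather than a continuous selector, so it needs no face-to-face compatibility (the paper uses that compatibility to paste the barycentric maps into a continuous $\Psi$). You also obtain part (2) from part (1) via the last-coordinate flip, whereas the paper repeats the construction with $O_d$.
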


In dimension 3, the parity sets $E_3$ in \eqref{eq: travariate even sign set} and $O_3$ in \eqref{eq: travariate odd sign set} each contain four sign patterns, whose associated vertices form the simplices $P_3^+$ and $P_3^-$. Consequently, every point in $P_3^+$ or $P_3^-$ has unique barycentric coordinates, so the sign-pattern probabilities $w_{\bm s}(u)$ from Lemma~\ref{lem:measurable-selection} are uniquely determined at each feasible level~$u$. Therefore, explicit universal upper bounds and the corresponding copulas to obtain them may be derived; we discuss these in Section~\ref{sec:d3}. However, for higher-order mixed moments ($d\ge 4$), the uniqueness does not hold. For example, when $d=4$, the parity sets are
$$\begin{aligned}
    E_4=\{&(+, +, +, +), (+, +, -, -), (+, -, +, -), (+, -, -, +), \\
    &(-, +, +, -), (-, +, -, +), (-, -, +, +), (-, -, -, -)\},
\end{aligned}
$$
and 
$$\begin{aligned}
    O_4=\{&(+, +, +, -), (+, +, -, +), (+, -, +, +), (+, -, -, -), \\
    &(-, +, +, +), (-, +, -, -), (-, -, +, -), (-, -, -, +)\}.
\end{aligned}
$$
Then the constructed polytopes $P_4^+$ and $P_4^-$ may be decomposed into $N=8$ simplices. At each magnitude level $u$, the sign weights $\{w_{\bm s}(u)\}_{\bm s\in E_4}$ must satisfy five linear constraints (normalization and four marginal conditions), leaving three degrees of freedom. The feasible set at each level~$u$, when nonempty, is therefore a three-dimensional polytope, in contrast with $d=3$ where the weights are uniquely determined. It follows that the maximizing and minimizing copulas are not unique in general when $d\geq 4$.

\subsection{Attainability of the maximizer and minimizer}\label{subsec:optimizer}
The following theorem provides the main characterization of attainability for the universal upper bound and an explicit construction of a maximizer.
\begin{theorem}\label{thm:attainability}
Let $U\eqiid V\sim \U[0, 1]$. Under Assumption \ref{ass:regularity}, the following are equivalent:
\begin{enumerate}[(1)]
\item\label{it:attain} There exists a random vector $(X_1, \dots, X_d)$ with $X_i\sim F_i$ for $i\in [d]$ such that
\begin{equation}\label{eq:attain-eq}
\E\left[\prod_{i=1}^d X_i\right] = \E\left[\prod_{i=1}^d G_i^{-1}(U)\right].
\end{equation}

\item\label{it:feasible} For a.e.\ $u\in(0, 1)$, the sign-bias vector
$\bm p_u := \left(p_1(u), \dots, p_d(u)\right)$
belongs to the even-parity polytope $P_d^+$. 
\end{enumerate}
When these conditions hold, the universal upper bound in \eqref{eq:upper-bound} is sharp.
Moreover, a maximizer can be constructed explicitly. Fix an ordering $E_d=\{\bm s^{(1)}, \dots, \bm s^{(m)}\}$ with $m=2^{d-1}$, and define cumulative sums $W_0(u):=0$ and $W_k(u):=\sum_{j=1}^k w_{\bm s^{(j)}}^\ast(u)$ for $k=1, \dots, m$. Let $Y_i:=G_i^{-1}(U)$, define $\bm S:=(S_1, \dots, S_d)$ by
\begin{equation}\label{eq:maximizer-construction}
\bm S:=\bm s^{(k)}\text{ when }W_{k-1}(U)<V\le W_k(U),
\end{equation}
and set $X_i:=S_iY_i$.
Then $\E[\prod_{i=1}^{d}X_i]=M_d^\times$.
\end{theorem}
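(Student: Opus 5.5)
We prove (2)$\Rightarrow$(1) by verifying the explicit construction, and (1)$\Rightarrow$(2) by combining the necessary conditions of Lemma~\ref{lem:product-bounds} with a conditioning argument on the magnitude rank. Sharpness of \eqref{eq:upper-bound} then follows immediately from (1) together with $M_d^\times\le \E[\prod_i G_i^{-1}(U)]$.

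\emph{Sufficiency.} Assume $\bm p_u\in P_d^+$ for a.e.\ $u$, and let $w^\ast_{\bm s}$ be the Borel weights from Lemma~\ref{lem:measurable-selection}(1). Then $W_k$ is Borel and $0=W_0\le\dots\le W_m=1$ a.e., so $\bm S$ in \eqref{eq:maximizer-construction} is a well-defined random vector. Conditionally on $U=u$, it satisfies $\p(\bm S=\bm s^{(k)}\mid U=u)=w^\ast_{\bm s^{(k)}}(u)$. Since every $\bm s^{(k)}\in E_d$, we have $\prod_i S_i=1$, hence $\prod_i X_i=\prod_i G_i^{-1}(U)\ge 0$, and \eqref{eq:attain-eq} holds once finiteness is checked (by H\"older and Assumption~\ref{ass:regularity}).

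It remains to verify $X_i\sim F_i$. For Borel $B\subset(0,\infty)$ we compute
\[
\p(X_i>0,\ |X_i|\in B)=\int_0^1 \id_{\{G_i^{-1}(u)\in B\}}\sum_{\bm s\in E_d,\,s_i=+1}w^\ast_{\bm s}(u)\,\d u=\int_0^1 \id_{\{G_i^{-1}(u)\in B\}}\,p_i(u)\,\d u .
\]
By Lemma~\ref{lem:sign-bias} ($H_i'=p_i$), this equals $\p(X_i\ge 0,\ G_i(|X_i|)\in G_i(B))=\int_B f_i$. The last step uses the change of variables in the proof of Lemma~\ref{lem:sign-bias}, with $G_i$ strictly increasing on the support. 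The negative half follows from $\p(|X_i|\in B)=\int_B g_i$ by subtraction. Since $\p(X_i=0)=0$, this identifies the law of $X_i$ as $F_i$.

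\emph{Necessity.} Let $\bm X$ attain \eqref{eq:attain-eq}. The right-hand side is finite and each $G_i$ is continuous under Assumption~\ref{ass:regularity}, so Lemma~\ref{lem:product-bounds} gives that $|X_1|,\dots,|X_d|$ are comonotonic and $\prod_i X_i\ge0$ a.s. Set $U:=G_1(|X_1|)\sim\U[0,1]$. By comonotonicity and continuity/strict monotonicity of $G_i$ on the support, $G_i(|X_i|)=U$ a.s. for all $i$. Since $\p(X_i=0)=0$, we have $|X_i|>0$ a.s., so the sign vector $\bm S=(\mathrm{sgn}\,X_i)_i$ lies in $E_d$ a.s.

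Take a regular conditional distribution of $\bm S$ given $U$ and define $w_{\bm s}(u):=\p(\bm S=\bm s\mid U=u)$ for $\bm s\in E_d$. Then for every $a$,
\[
\int_0^a\sum_{s_i=+1}w_{\bm s}(u)\,\d u=\p(X_i\ge0,\ G_i(|X_i|)\le a)=H_i(a)=\int_0^a p_i(u)\,\d u .
\]
Differentiating gives $\sum_{s_i=+1}w_{\bm s}(u)=p_i(u)$ for a.e.\ $u$. Hence $\bm p_u=\sum_{\bm s\in E_d} w_{\bm s}(u)\,v(\bm s)\in P_d^+$ a.e.

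\emph{Main obstacle.} The delicate point is in the necessity direction: passing from comonotonic magnitudes to the a.s.\ identity $G_i(|X_i|)=U$ with one common $U$, so that the conditional sign law is indexed by the same $u$ as $p_i(u)$. The plan is to use that a comonotonic vector with continuous marginals is $(G_i^{-1}(U))_i$ a.s., and that $G_i(G_i^{-1}(u))=u$ for continuous $G_i$. Connected support ensures no flat pieces interfere. A final check is that the countably many null sets arising from the a.e.\ statements can be discarded simultaneously.
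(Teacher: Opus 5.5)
Your proposal is correct and follows the paper's proof essentially step for step. For sufficiency you use the measurable-selection weights and then check the marginals; for necessity you use the equality conditions of Lemma~\ref{lem:product-bounds}, identify $G_i(|X_i|)=U$ a.s.\ via comonotonicity, and match $\int_0^t$ of the conditional positive-sign probability against $H_i(t)$. The differences are cosmetic: you take a regular conditional distribution where the paper applies Doob--Dynkin to $\E[\id_{\{\bm S=\bm s\}}\mid\sigma(U)]$, and you verify the marginals through the measure with distribution function $H_i$ rather than through the explicit density formula \eqref{eq:sign-bias}.
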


The construction in \eqref{eq:maximizer-construction} decomposes each factor as $X_i = S_i Y_i$, separating magnitude from sign. To maximize the expected product, the magnitudes $Y_i = G_i^{-1}(U)$ must be comonotonic, while the sign vector $\bm S$ must be coordinated so that $\prod_{i=1}^d S_i = +1$ a.s., ensuring the product is nonnegative. The even-parity feasibility condition $\bm p_u \in P_d^+$ is what allows both requirements to be met simultaneously while respecting the marginal sign biases.

Using Lemma~\ref{lem:product-bounds} and (2) of Lemma~\ref{lem:measurable-selection}, the following theorem is the odd-parity counterpart of Theorem~\ref{thm:attainability}.

\begin{theorem}\label{thm:lower-attainability}
Let $U\eqiid V\sim \U[0, 1]$. Under Assumption~\ref{ass:regularity}, the following are equivalent:
\begin{enumerate}[(1)]
\item\label{it:lower-attain} There exists a random vector $(X_1, \dots, X_d)$ with $X_i\sim F_i$ for $i\in [d]$  such that
\begin{equation}\label{eq:lower-attain-eq}
\E\left[\prod_{i=1}^d X_i\right] = -\E\left[\prod_{i=1}^d G_i^{-1}(U)\right].
\end{equation}

\item\label{it:lower-feasible} For a.e.\ $u\in(0, 1)$, the sign-bias vector
$\bm p_u := \left(p_1(u), \dots, p_d(u)\right)$
belongs to the odd-parity polytope $P_d^-$. 
\end{enumerate}
When these conditions hold, the universal lower bound in \eqref{eq:lower-bound} is sharp.
Moreover, we construct a minimizer as follows.  Fix an ordering $O_d=\{\bm s^{(1)}, \dots, \bm s^{(m)}\}$ with $m=2^{d-1}$, and define cumulative sums $W_0(u):=0$ and $W_k(u):=\sum_{j=1}^k w_{\bm s^{(j)}}^*(u)$ for $k=1, \dots, m$. Let $Y_i:=G^{-1}_i(U)$ and $\bm S:=(S_1, \dots, S_d)$ with $S_i:=\mathrm{sign}(X_i)\in\{-1, +1\}$. Define $X_i:=S_i Y_i$ with
\begin{equation}\label{eq:minimizer-construction}
\bm S:=\bm s^{(k)}\text{ when }W_{k-1}(U)<V\le W_k(U).
\end{equation}
Then $\E[\prod_{i=1}^{d}X_i]=m_d^\times$.
\end{theorem}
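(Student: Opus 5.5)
The proof mirrors that of Theorem~\ref{thm:attainability}, with $E_d$, $P_d^+$ and the nonnegativity constraint replaced by $O_d$, $P_d^-$ and nonpositivity. Throughout, write $B:=\E[\prod_{i=1}^d G_i^{-1}(U)]$. Under Assumption~\ref{ass:regularity}, H\"older's inequality gives $B<\infty$, and every $G_i$ is continuous, so the necessity part of Lemma~\ref{lem:product-bounds} is available.

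\emph{(2)$\Rightarrow$(1).} Apply part~(2) of Lemma~\ref{lem:measurable-selection} to the Borel representative $\bm p_u$ from Lemma~\ref{lem:sign-bias}. This gives Borel weights $w^*_{\bm s}$, $\bm s\in O_d$, and hence Borel cumulative sums $W_k$. Define $\bm S$ by \eqref{eq:minimizer-construction}, set $Y_i=G_i^{-1}(U)$ and $X_i=S_iY_i$. Conditionally on $U=u$, $\bm S$ takes the value $\bm s$ with probability $w^*_{\bm s}(u)$, so $\p(S_i=+1\mid U=u)=p_i(u)$ for a.e.\ $u$.

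To check that $X_i\sim F_i$, compute $\p(X_i\ge0,\ Y_i\le G_i^{-1}(u))=\int_0^u p_i(r)\,\d r=H_i(u)$. Since $G_i$ is continuous and strictly increasing on its support, this is exactly $\p(X_i'\ge 0,\ |X_i'|\le G_i^{-1}(u))$ for $X_i'\sim F_i$. Taking complements against $\p(Y_i\le y)=G_i(y)$ gives the negative half as well, so the joint law of $(\mathrm{sign}(X_i),|X_i|)$ matches that of $X_i'$. Therefore $X_i\sim F_i$.

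Every $\bm s\in O_d$ satisfies $\prod_i s_i=-1$, so $\prod_i X_i=-\prod_i Y_i$ a.s. Taking expectations gives $\E[\prod_i X_i]=-B$. Combined with \eqref{eq:lower-bound}, this shows the bound is sharp and the construction is a minimizer.

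\emph{(1)$\Rightarrow$(2).} Suppose a coupling attains $-B$. By \eqref{eq:lower-bound} it attains $m_d^\times$. Lemma~\ref{lem:product-bounds} (finite bound, continuous $G_i$) then gives two facts: $\prod_i X_i\le0$ a.s., and $|X_1|,\dots,|X_d|$ are comonotonic. Comonotonicity lets us write $|X_i|=G_i^{-1}(U)$ a.s.\ for a single uniform $U$; one can take $U:=G_1(|X_1|)$ using continuity and strict monotonicity. Because $\p(X_i=0)=0$, the product is strictly negative a.s., so $\bm S=(\mathrm{sign}\,X_i)_i\in O_d$ a.s.

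Now let $q_{\bm s}(u):=\p(\bm S=\bm s\mid U=u)$ be a regular conditional probability. These weights are nonnegative, sum to one, and are supported on $O_d$. Their coordinate marginals are $\p(S_i=+1\mid U=u)$. Since $\{U\le u\}=\{G_i(|X_i|)\le u\}$ a.s., integrating this marginal over $[0,u]$ gives $H_i(u)$. Hence it equals $p_i(u)$ for a.e.\ $u$, and $\bm p_u=\sum_{\bm s\in O_d}q_{\bm s}(u)v(\bm s)\in P_d^-$ for a.e.\ $u$.

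\emph{Main obstacle.} The delicate step is the identification in the necessity direction. One must show that comonotonicity of the $|X_i|$ lets a single $U$ index every coordinate simultaneously, so that one conditional sign law serves all $i$. One must also match the conditional marginals with the a.e.\ derivative $p_i$ rather than with a pointwise conditional probability. The plan for both is to derive the identity through the integrated quantity $H_i$ and then differentiate via Lebesgue's theorem, exactly as in Lemma~\ref{lem:sign-bias}.
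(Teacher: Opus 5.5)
Your proposal is correct and follows essentially the same route as the paper, which proves this theorem as the odd-parity mirror of its appendix proof of Theorem~\ref{thm:attainability}: measurable selection plus the $V$-randomized sign vector for sufficiency; for necessity, the equality conditions of Lemma~\ref{lem:product-bounds}, a common uniform $U$, conditional sign weights via Doob--Dynkin, and identification with $p_i$ by integrating against $H_i$ and differentiating. The differences are cosmetic: you check the marginals through the sub-distribution functions $H_i$ and complements rather than a change of variables over Borel sets, and you take $U:=G_1(|X_1|)$ directly instead of arguing that all the $G_i(|X_i|)$ coincide a.s.
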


The constructions in Theorems~\ref{thm:attainability} and~\ref{thm:lower-attainability} mix over all $2^{d-1}$ sign patterns of the relevant parity. The following proposition shows that this can always be reduced to a sparse mixture.

\begin{proposition}\label{prop:parity-sparsity}
Fix $u\in(0, 1)$ and suppose that $\bm p_u$ belongs to $P_d^+$ or $P_d^-$. Then there exist at most $d+1$ sign vectors $\bm s^{(1)}, \dots, \bm s^{(r)}$ from $E_d$ or $O_d$, respectively, and weights $\lambda_1, \dots, \lambda_r\ge 0$ with $\sum_{j=1}^r\lambda_j=1$ such that
$$\bm p_u=\sum_{j=1}^r \lambda_j v\left(\bm s^{(j)}\right).$$
Equivalently, at each feasible level $u$, one may realize the sign distribution using at most $d+1$ parity-consistent patterns.
\end{proposition}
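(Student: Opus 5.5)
This is a direct application of Carathéodory's theorem. By definition, $P_d^+=\conv\{v(\bm s):\bm s\in E_d\}$ is the convex hull of finitely many points of $\R^d$, and $P_d^-$ likewise for $O_d$. Carathéodory's theorem says that every point in the convex hull of a set $A\subset\R^d$ is a convex combination of at most $d+1$ points of $A$. Taking $A=\{v(\bm s):\bm s\in E_d\}$ or $A=\{v(\bm s):\bm s\in O_d\}$ gives the representation $\bm p_u=\sum_{j=1}^r\lambda_j v(\bm s^{(j)})$ with $r\le d+1$.

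To keep the argument self-contained, I would also sketch the standard reduction. Start from any representation $\bm p_u=\sum_{\bm s}w_{\bm s}v(\bm s)$ with $w_{\bm s}\ge 0$ and $\sum_{\bm s} w_{\bm s}=1$; one exists by the definition of the polytope, or by Lemma~\ref{lem:measurable-selection} pointwise. Suppose more than $d+1$ weights are positive. Then the lifted vectors $(v(\bm s),1)\in\R^{d+1}$ attached to those weights are linearly dependent, so there is a nonzero $\bm\mu$, supported on them, with $\sum_{\bm s}\mu_{\bm s}(v(\bm s),1)=0$. Because the $\mu_{\bm s}$ sum to zero and are not all zero, some $\mu_{\bm s}>0$. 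Replace $w$ by $w-t\bm\mu$, where $t=\min\{w_{\bm s}/\mu_{\bm s}:\mu_{\bm s}>0\}$. The new weights are still nonnegative, still sum to one, and still reproduce $\bm p_u$, but at least one of them is now zero. Repeating this finitely many times leaves at most $d+1$ positive weights.

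Finally, the "equivalently" clause is just the interpretation of this decomposition. Put probability $\lambda_j$ on the pattern $\bm s^{(j)}$. This gives a distribution supported on at most $d+1$ parity-consistent sign vectors, and its coordinatewise positive-sign probabilities equal $\bm p_u$. It can therefore replace the full family $\{w_{\bm s}(u)\}$ at level $u$ in the constructions \eqref{eq:maximizer-construction} and \eqref{eq:minimizer-construction}.

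There is essentially no obstacle here. The only points to note are that $d+1$ is the affine-dimension count (the lifting to $\R^{d+1}$), and that the statement is pointwise in $u$, so no measurability issue arises.
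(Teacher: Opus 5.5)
Your proposal is correct and matches the paper's proof, which is a one-line appeal to Carath\'eodory's theorem for the polytopes $P_d^+$ and $P_d^-$ in $\R^d$. Your extra sketch of the standard lifting-to-$\R^{d+1}$ and weight-elimination argument is also correct; it only makes that appeal self-contained.
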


\begin{proof}
This is a direct application of Carathéodory's theorem, since $P_d^+$ and $P_d^-$ are polytopes in $\R^d$.
\end{proof}

\begin{remark}
The measurable selection guaranteed by Lemma~\ref{lem:measurable-selection} is used only to establish the existence of a Borel-measurable weight function $w^\ast_{\bm s}$; the constructions in Theorems~\ref{thm:attainability} and~\ref{thm:lower-attainability} are well-defined as soon as such a $w^\ast_{\bm s}$ exists. In the trivariate case of Section~\ref{sec:d3}, we do not require this lemma: the polytopes $P_3^+$ and $P_3^-$ are simplices, so the weights are given in closed form. In the higher-dimensional case of Section~\ref{sec:recursive-parity}, we will construct extremal couplings recursively from the trivariate ones, again without invoking the general selector. 
\end{remark}

Theorems~\ref{thm:attainability} and~\ref{thm:lower-attainability} characterize simultaneous sharpness by the condition $\bm p_u\in P_d^+\cap P_d^-$ for a.e.\ $u$. The following corollary gives a marginal-by-marginal sufficient condition that avoids checking this vector condition directly.

\begin{corollary}\label{cor:balanced-marginals}
Under Assumption~\ref{ass:regularity}, suppose that, for every $i\in[d]$,
$$\frac{1}{d-1}f_i(-y)\le f_i(y)\le(d-1)f_i(-y)$$
for a.e.\ $y>0$. Then both universal bounds in \eqref{eq:upper-bound} and \eqref{eq:lower-bound} are sharp.
\end{corollary}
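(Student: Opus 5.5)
The plan is to reduce the corollary to Theorems~\ref{thm:attainability} and~\ref{thm:lower-attainability}: it suffices to show that, for a.e.\ $u\in(0,1)$, the sign-bias vector $\bm p_u$ lies in $P_d^+\cap P_d^-$.

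\emph{Step 1: translate the density condition into a bound on $p_i(u)$.} By Lemma~\ref{lem:sign-bias}, for a.e.\ $u$,
\[
p_i(u)=\frac{f_i(y)}{f_i(y)+f_i(-y)}, \qquad y=G_i^{-1}(u)>0 .
\]
The map $t\mapsto t/(t+1)$ is increasing in the ratio $t=f_i(y)/f_i(-y)$. Under the hypothesis $t\in[1/(d-1),\,d-1]$, this gives
\[
\frac1d\le p_i(u)\le \frac{d-1}{d}.
\]
One technical point needs care. The hypothesis holds for a.e.\ $y>0$, and we must pass to a.e.\ $u$. I would argue via the change of variables $y=G_i^{-1}(u)$: $G_i$ is continuous and strictly increasing on the support, so the image of a Lebesgue-null set of $y$ under $G_i$ is null, because $G_i$ is absolutely continuous. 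Any degenerate case where both densities vanish occurs only on a null set.

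\emph{Step 2: show the cube $[1/d,(d-1)/d]^d$ lies in $P_d^+\cap P_d^-$.} I would use the forbidden-set description \eqref{eq:parity-forbidden-set}. For any $F\subseteq[d]$, each term $1-p_i$ with $i\notin F$ and each term $p_i$ with $i\in F$ is at least $1/d$. Since there are $d$ terms in total, the left side of \eqref{eq:parity-forbidden-set} is at least $d\cdot(1/d)=1$. So the inequality holds for every $F$, whatever its parity. Hence $\bm p_u\in P_d^+$ (odd $F$) and $\bm p_u\in P_d^-$ (even $F$).

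\emph{Step 3: conclude.} Since $\bm p_u\in P_d^+\cap P_d^-$ for a.e.\ $u$, Theorem~\ref{thm:attainability} makes the upper bound in \eqref{eq:upper-bound} sharp. Likewise, Theorem~\ref{thm:lower-attainability} makes the lower bound in \eqref{eq:lower-bound} sharp.

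The main obstacle is only the measure-theoretic transfer in Step 1, from ``a.e.\ $y$'' to ``a.e.\ $u$''. The polytope check itself is immediate once we have the forbidden-set characterization. That characterization is quoted from the literature, and I take it as given.
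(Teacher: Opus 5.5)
Your proposal is correct and follows the paper's proof. Both arguments first convert the density condition into $p_i(u)\in[1/d,(d-1)/d]$ via \eqref{eq:sign-bias}, then check that this cube satisfies every linear inequality defining $P_d^+$ and $P_d^-$, and finally invoke Theorems~\ref{thm:attainability} and~\ref{thm:lower-attainability}. The paper writes the inequalities in facet form $\sum_{i\in S}x_i-\sum_{i\notin S}x_i\le|S|-1$, which is exactly your forbidden-set inequality with $S=[d]\setminus F$; your term-by-term bound and your explicit passage from a.e.\ $y$ to a.e.\ $u$ are, if anything, slightly cleaner than the paper's one-line treatment.
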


For elliptical location-scale marginals $X_i=\mu_i+\sigma_i Z_i$, suppose that $Z_i$ has a positive symmetric density $h_i$ on $\R$ and that $\log h_i$ is Lipschitz continuous with constant $L_i<\infty$. Corollary~\ref{cor:balanced-marginals} applies whenever
$$2L_i\frac{|\mu_i|}{\sigma_i}\le\log(d-1), \qquad i\in[d].$$
If $Z_i$ has unit variance, then $\mu_i$ and $\sigma_i^2$ are the mean and variance of $X_i$. Thus, a smaller absolute mean or a larger variance keeps the two density branches closer and the sign bias nearer $1/2$. Since the marginal sign-bias vector $\bm p_u$ is then closer to the center of both parity polytopes, it remains in $P_d^+\cap P_d^-$ for a wider range of parameter values.

\section{Identically distributed marginal distributions}\label{sec:iid}
In this section, we assume $X_i\sim F$ with density $f$ and $|X_i|\sim G$. Then
$$p(u)=\frac{f(G^{-1}(u))}{f(G^{-1}(u))+f(-G^{-1}(u))}.$$

Restricting the parity polytopes to the diagonal $p_1=\cdots=p_d=p$ reduces the feasibility conditions to scalar constraints that depend on the parity of $d$. The following proposition translates this diagonal geometry into sharpness criteria for the product expectations.

\begin{proposition}\label{cor:iid}
Let $X_i\sim F$ and $p(u)$ be the common sign-bias function. Under Assumption~\ref{ass:regularity}, $M_d^{\times}=\E\left[G^{-1}(U)^d\right]$ and $m_d^{\times}=-\E\left[G^{-1}(U)^d\right]$ are attainable iff
$(p(u), \dots, p(u))\in P_d^+$ and $(p(u), \dots, p(u))\in P_d^-$ for a.e.\ $u$, respectively. In particular,
\begin{enumerate}[(1)]
  \item if $d$ is even, $M_d^{\times}$ in \eqref{eq:upper-bound}  is always sharp; $m_d^{\times}$ in \eqref{eq:lower-bound} is sharp iff $\frac{1}{d}\le p(u)\le \frac{d-1}{d}$ for a.e.\ $u$;
  \item if $d$ is odd, for a.e\ $u$, $M_d^{\times}$ in \eqref{eq:upper-bound} is sharp iff $p(u)\ge \frac{1}{d}$; $m_d^{\times}$ in \eqref{eq:lower-bound} is sharp iff $p(u)\le \frac{d-1}{d}$. 
\end{enumerate}
In either case, both universal bounds are sharp iff
$$\frac{1}{d-1}f(-y)\le f(y)\le(d-1)f(-y)$$
for a.e.\ $y>0$.
\end{proposition}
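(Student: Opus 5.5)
The first assertion is Theorems~\ref{thm:attainability} and~\ref{thm:lower-attainability} specialized to $F_i=F$, so that $G_i=G$ and $\bm p_u=(p(u),\dots,p(u))$. Both theorems hold under Assumption~\ref{ass:regularity}. Everything else reduces to deciding when a diagonal point $(p,\dots,p)$, $p\in[0,1]$, lies in $P_d^+$ or $P_d^-$. I will use the forbidden-set description \eqref{eq:parity-forbidden-set}: $\bm p\in P_d^+$ iff the inequality holds for every odd-cardinality $F\subseteq[d]$, and $\bm p\in P_d^-$ iff it holds for every even-cardinality $F$. On the diagonal, the left-hand side of \eqref{eq:parity-forbidden-set} depends on $F$ only through $k=|F|$:
$$\phi_p(k):=(d-k)(1-p)+kp=d(1-p)+k(2p-1).$$
Membership therefore becomes $\phi_p(k)\ge 1$ for all $k\in\{0,\dots,d\}$ of the prescribed parity. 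Since $\phi_p$ is affine in $k$, it suffices to check the smallest and the largest admissible $k$.

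Next I carry out the case analysis.
\begin{itemize}
\item $k=0$: $\phi_p(0)\ge1 \iff p\le (d-1)/d$.
\item $k=1$: $\phi_p(1)=d-1-(d-2)p\ge 1 \iff (d-2)(1-p)\ge0$, which always holds.
\item $k=d-1$: $\phi_p(d-1)=1+(d-2)p\ge1$, which always holds.
\item $k=d$: $\phi_p(d)\ge1 \iff p\ge 1/d$.
\end{itemize}
For $d$ even, the odd $k$ range over $1,\dots,d-1$, so $P_d^+$ contains the whole diagonal and $M_d^\times$ is always sharp. The even $k$ range over $0,\dots,d$, so $P_d^-$ requires $1/d\le p\le (d-1)/d$. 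For $d$ odd, the odd $k$ range over $1,\dots,d$, giving the single condition $p\ge 1/d$ for $P_d^+$. The even $k$ range over $0,\dots,d-1$, giving $p\le(d-1)/d$ for $P_d^-$. Applying these conditions at $p=p(u)$ for a.e.\ $u$ yields items (1) and (2).

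In both parities, simultaneous sharpness is therefore equivalent to $1/d\le p(u)\le (d-1)/d$ for a.e.\ $u$. Write $y=G^{-1}(u)$ and use Lemma~\ref{lem:sign-bias}: $p(u)\ge 1/d$ iff $d f(y)\ge f(y)+f(-y)$, i.e.\ $f(-y)\le (d-1)f(y)$. Symmetrically, $p(u)\le (d-1)/d$ iff $f(y)\le (d-1)f(-y)$. This is exactly the displayed density condition. For $y$ outside the support of $|X|$ both $f(y)$ and $f(-y)$ vanish, so the condition holds trivially there.

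The step needing the most care is transferring ``for a.e.\ $u$'' to ``for a.e.\ $y>0$''. Under Assumption~\ref{ass:regularity}, $G$ is strictly increasing and absolutely continuous on the support interval, with density $g(y)=f(y)+f(-y)>0$ on its interior. Hence $G$ maps Lebesgue-null sets to null sets. Conversely, $G^{-1}$ is locally absolutely continuous on $(0,1)$ with derivative $1/g(G^{-1}(u))$, so it also maps null sets to null sets. It follows that the exceptional set in $u$ is null iff its image in $y$ is null. I also need the Borel representative of $p$ from Lemma~\ref{lem:sign-bias} rather than an arbitrary a.e.\ derivative, so that the pointwise inequalities in $u$ and in $y$ describe the same set.
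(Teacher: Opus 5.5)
Your proof is correct, but it reaches the diagonal membership criteria by a different route from the paper's.

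You restrict the forbidden-set description \eqref{eq:parity-forbidden-set} to the diagonal. There it collapses to the affine function $\phi_p(k)=d(1-p)+k(2p-1)$ of $k=|F|$, so necessity and sufficiency both follow from one endpoint check, and $d=2$ needs no separate treatment.

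The paper works only from the vertex definition of $P_d^\pm$. Necessity comes from the range of vertex coordinate sums, which is exactly your inequality with $F=[d]$ or $F=\emptyset$. Sufficiency is shown by writing $p\,\mathbf 1_d$ as an explicit convex combination of symmetric vertex averages ($\mathbf 0_d$, $\frac1d\mathbf 1_d$, $\frac{d-1}{d}\mathbf 1_d$, $\mathbf 1_d$), with $d=2$ handled separately.

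The trade-off is as follows. Your sufficiency direction rests on the nontrivial half of Jeroslow's theorem: that the forbidden-set inequalities together with $[0,1]^d$ cut out exactly the parity polytope. The paper only cites that result. The paper's argument is self-contained and constructive: its convex combinations are explicit, Borel-in-$u$ sign weights $w_{\bm s}(u)$. These can be fed directly into Theorems~\ref{thm:attainability} and~\ref{thm:lower-attainability} without invoking the measurable-selection lemma.

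On the other hand, your proposal actually proves the closing density criterion, including the transfer from a.e.\ $u$ to a.e.\ $y>0$. The paper's proof leaves this implicit; only the sufficiency half appears, as Corollary~\ref{cor:balanced-marginals}. Your transfer can also be done in one line. If $B$ is the set of $y>0$ where the density inequality fails, the exceptional $u$-set is $\{u:G^{-1}(u)\in B\}$. Its Lebesgue measure is $\p(|X|\in B)=\int_B g(y)\,\d y$, and $g>0$ on the interior of the support.

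Finally, both you and the paper read ``sharp'' as ``attained''. This is legitimate: under $\E[|X_i|^d]<\infty$ the product is uniformly integrable over the weakly compact Fr\'echet class, so $M_d^\times$ and $m_d^\times$ are always attained.
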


When $d=2$, the odd-parity constraint forces $p(u)=1/2$, which by \eqref{eq:sign-bias} requires $f(y)=f(-y)$. This implies that $F$ must be symmetric about zero. As $d$ grows, the feasibility band $[1/d, (d-1)/d]$ widens for even $d$ because the required sign reversal can be distributed across more coordinates. Therefore, $m_d^{\times}$ tolerates progressively more asymmetry.

\begin{corollary}\label{cor:skew-obstruction}
Let $d\ge 2$ and $a>0$, and let $X_i\sim F$ such that $F$ is supported on $[-a, \infty)$ and has unbounded positive support for $i\in[d]$. Under Assumption~\ref{ass:regularity}, $m_d^{\times}$ in \eqref{eq:lower-bound} is not sharp.
\end{corollary}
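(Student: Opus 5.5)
The plan is to apply Proposition~\ref{cor:iid}. It gives a scalar necessary condition for sharpness of $m_d^\times$: for even $d$, $p(u)\le (d-1)/d$; for odd $d$, $p(u)\le (d-1)/d$ as well. So for every $d\ge 2$, sharpness of the lower bound requires $p(u)\le (d-1)/d<1$ for a.e.\ $u$. (For $d=2$ the condition is even stronger, $p(u)=1/2$.) It therefore suffices to exhibit a set of $u$ of positive Lebesgue measure on which $p(u)=1$.

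To find such a set, use the support assumption. Since $F$ is supported on $[-a,\infty)$, we have $f(-y)=0$ for a.e.\ $y>a$. Since the positive support is unbounded and, by Assumption~\ref{ass:regularity}, the support is connected with $f>0$ on its interior, $f(y)>0$ for all $y>a$. Plugging into \eqref{eq:sign-bias} gives
\[
p(u)=\frac{f(G^{-1}(u))}{f(G^{-1}(u))+0}=1 \quad\text{whenever } G^{-1}(u)>a .
\]

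The set $\{u: G^{-1}(u)>a\}$ contains the interval $(G(a),1)$. This interval has positive length because $\p(|X|>a)\ge \p(X>a)>0$, again by the unbounded positive support. Hence $p(u)=1>(d-1)/d$ on a set of positive measure, the condition of Proposition~\ref{cor:iid} fails, and equivalently $\bm p_u=(1,\dots,1)\notin P_d^-$ there, since $(1,\dots,1)$ is the even-parity vertex and violates \eqref{eq:parity-forbidden-set} with $F=\emptyset$. By Theorem~\ref{thm:lower-attainability}, the universal lower bound is not attained.

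The only delicate step is the a.e.\ bookkeeping: the change of variables $y=G^{-1}(u)$ must map a positive-measure set of $y>a$ to a positive-measure set of $u$. This holds because $G$ is continuous and strictly increasing on the support, so $G^{-1}$ maps $(G(a),1)$ into $(a,\infty)$ and null sets of $y$ where the density representative misbehaves pull back to null sets of $u$ (the density $g$ is positive there). Finally, I would note that ``not sharp'' follows because Theorem~\ref{thm:lower-attainability} characterizes attainability. If $m_d^\times$ equals the bound only as an infimum, an extra remark is needed: that the infimum is attained, which holds by tightness of the Fréchet class together with uniform integrability from $\E|X_i|^d<\infty$, so equality of values would force attainment.
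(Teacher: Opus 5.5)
Your proposal is correct and follows the paper's argument: $p(u)=1$ on the positive-measure interval $(G(a),1)$, the point $\mathbf 1_d$ violates the inequality $\sum_i x_i\le d-1$ satisfied by every point of $P_d^-$ (the coordinate-sum bound, equivalently the $F=\emptyset$ forbidden-set inequality), and Theorem~\ref{thm:lower-attainability} then rules out attainment. Your closing remark that the infimum is attained, by tightness of the Fr\'echet class and uniform integrability via $|\prod_i x_i|\le \frac1d\sum_i|x_i|^d$, usefully makes explicit a step the paper leaves implicit when it passes from ``not attained'' to ``not sharp''.
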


\begin{proof}
For $y>a$, the value $-y<-a$ lies outside the support of $F$, so the conditional sign of $X$ given $|X|=y$ is necessarily positive; hence $p(u)=1$ for a.e.\ $u\in(G(a), 1)$, which is a set of positive measure since $F$ has unbounded positive support. Since every vertex of $P_d^-$ has coordinate sum at most $d-1$, the point $\mathbf{1}_d$ with coordinate sum $d$ does not belong to $P_d^-$. By Theorem~\ref{thm:lower-attainability}, $m_d^{\times}$ is not sharp.
\end{proof}

By a symmetric argument, if $F$ is supported on $(-\infty, a]$ for some $a>0$ with unbounded negative support, then $p(u)=0$ on a set of positive measure. When $d$ is odd, this violates the condition $p(u)\ge 1/d$ of Proposition~\ref{cor:iid}, so $M_d^{\times}$ is also not sharp. Thus, for both $M_d^{\times}$ and $m_d^{\times}$ to be simultaneously sharp with $X_i\sim F$, $F$ must either be supported on all of $\R$ or have bounded support.

When $d=4$, Proposition~\ref{cor:iid} specializes as follows.

\begin{corollary}\label{prop:d4-iid}
Let $X_i\sim F$. Under Assumption~\ref{ass:regularity},
$M_4^{\times}=\E[G^{-1}(U)^4]$ is always attainable;
$m_4^{\times}=-\E[G^{-1}(U)^4]$ is attainable iff
$\frac{1}{4}\le p(u)\le\frac{3}{4}\text{ for a.e.\ }u\in(0, 1).$
\end{corollary}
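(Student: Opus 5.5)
This corollary is the case $d=4$ of Proposition~\ref{cor:iid}, so the plan is to specialize that result and then check the diagonal of the parity polytopes directly as a sanity check. Since $d=4$ is even, item (1) of Proposition~\ref{cor:iid} gives the claim at once. The upper bound $M_4^\times=\E[G^{-1}(U)^4]$ is always sharp. The lower bound is sharp iff $\frac14\le p(u)\le \frac34$ for a.e.\ $u$. To keep the argument self-contained, I would also verify both diagonal memberships by hand and then invoke Theorems~\ref{thm:attainability} and~\ref{thm:lower-attainability}.

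\textbf{Upper bound.} Take $\bm p=(p,p,p,p)$ with $p\in[0,1]$. Both $(1,1,1,1)=v(+,+,+,+)$ and $(0,0,0,0)=v(-,-,-,-)$ are vertices of $P_4^+$, since $(-,-,-,-)\in E_4$. Hence $(p,p,p,p)=p\,v(+,+,+,+)+(1-p)\,v(-,-,-,-)\in P_4^+$ for every $p$. Theorem~\ref{thm:attainability} then gives attainability for any $F$. This also supplies an explicit maximizer: choose all signs $+$ with probability $p(U)$ and all signs $-$ otherwise.

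\textbf{Lower bound, sufficiency.} The vertices of $P_4^-$ are the indicator vectors with coordinate sum $1$ or $3$.
\begin{itemize}
\item Averaging the four vectors with sum $3$ with equal weights gives $(\frac34,\frac34,\frac34,\frac34)$.
\item Averaging the four unit vectors with equal weights gives $(\frac14,\frac14,\frac14,\frac14)$.
\end{itemize}
By convexity, the whole diagonal segment $\{(p,p,p,p):\frac14\le p\le\frac34\}$ lies in $P_4^-$.

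\textbf{Lower bound, necessity.} Every point of $P_4^-$ has coordinate sum between $1$ and $3$, so $4p\in[1,3]$ is forced.

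With the diagonal description in hand, Theorem~\ref{thm:lower-attainability} yields the equivalence for a.e.\ $u$. The only step needing care is necessity, and that reduces to the coordinate-sum bound just stated, so I do not expect a real obstacle. The measurability of the weights is explicit here: they are affine in $p(u)$ on the segment, namely the mixture $\lambda$ of the two averaged points with $4p=3\lambda+(1-\lambda)$. So Lemma~\ref{lem:measurable-selection} is not even needed.
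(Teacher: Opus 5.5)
Your proposal is correct and follows the paper's route: the corollary is the $d=4$ case of Proposition~\ref{cor:iid}(1), and your hand check is that proposition's proof specialized to $d=4$, including the segment from $\mathbf 0_4$ to $\mathbf 1_4$ inside $P_4^+$, the coordinate-sum bound $[1,3]$ for necessity, and the averages $\frac14\mathbf 1_4$ and $\frac34\mathbf 1_4$ mixed with $\lambda=(4p-1)/2$ for sufficiency. As a side remark, your explicit maximizer makes all four signs equal, so it is simply the comonotonic coupling $X_1=X_2=X_3=X_4$.
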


\section{Sharp bounds for the trivariate product}\label{sec:d3}
We now specialize the general theory of Section~\ref{sec:sharp-max} to the trivariate case, which is the lowest dimension in which the sign-recombination mechanism is nontrivial. For $d=2$, the product is supermodular, so the optimizers are the classical comonotonic and countermonotonic couplings. For $d=3$, the parity sets $E_3$ in \eqref{eq: travariate even sign set} and $O_3$ in \eqref{eq: travariate odd sign set} each contain exactly four sign vectors, so the polytopes $P_3^+$ and $P_3^-$ each have four vertices in $\R^3$ and are therefore simplices. Any point in a simplex has unique barycentric coordinates, so the sign-pattern probabilities $w_{\bm s}(u)$ from Lemma~\ref{lem:measurable-selection} are uniquely determined at each feasible level~$u$. This uniqueness makes the trivariate case the natural setting for explicit formulas and for the illustrated examples in Appendix~\ref{sec:examples}.

The inequalities below are the $d=3$ specialization of the forbidden-set description in \eqref{eq:parity-forbidden-set}; here they provide explicit attainability conditions and the sign weights used to construct the extremal copulas.

A vector $\bm p_u=(p_1(u), p_2(u), p_3(u))\in[0, 1]^3$ belongs to $P_3^+$ and $P_3^-$ iff the weights
\begin{equation}\label{eq:d3-even-weights}
\begin{aligned}
w_{(+, +, +)}(u)&:=\frac{p_1(u)+p_2(u)+p_3(u)-1}{2}, \quad &w_{(+, -, -)}(u)&:=\frac{1+p_1(u)-p_2(u)-p_3(u)}{2}, \\
w_{(-, +, -)}(u)&:=\frac{1-p_1(u)+p_2(u)-p_3(u)}{2}, \quad &w_{(-, -, +)}(u)&:=\frac{1-p_1(u)-p_2(u)+p_3(u)}{2}
\end{aligned}
\end{equation} and \begin{equation}\label{eq:d3-odd-weights}
\begin{aligned}
w_{(+, +, -)}(u)&:=\frac{p_1(u)+p_2(u)-p_3(u)}{2}, \quad &w_{(+, -, +)}(u)&:=\frac{p_1(u)-p_2(u)+p_3(u)}{2}, \\
w_{(-, +, +)}(u)&:=\frac{-p_1(u)+p_2(u)+p_3(u)}{2}, \quad &w_{(-, -, -)}(u)&:=\frac{2-p_1(u)-p_2(u)-p_3(u)}{2}
\end{aligned}
\end{equation}
are all nonnegative, respectively. Equivalently, for a.e.\ $u$, one must have the inequalities
$$p_1(u)+p_2(u)+p_3(u)\ge 1, \qquad p_1(u)\ge p_2(u)+p_3(u)-1,$$
$$p_2(u)\ge p_1(u)+p_3(u)-1, \qquad p_3(u)\ge p_1(u)+p_2(u)-1,$$
for the universal upper bound and $$p_1(u)+p_2(u)+p_3(u)\le 2, \qquad p_1(u)\le p_2(u)+p_3(u),$$
$$p_2(u)\le p_1(u)+p_3(u), \qquad p_3(u)\le p_1(u)+p_2(u),$$ for the universal lower bound. When these hold at a.e.\ level $u$, Theorem~\ref{thm:attainability} (resp., \ref{thm:lower-attainability}) guarantees that the universal upper (resp., lower) bound of $M_d^{\times}$ (resp., $m_d^{\times}$) is attained. Moreover, the maximizing (resp., minimizing) coupling assigns sign vector $\bm s\in E_3$ (resp., $\bm s\in O_3$) with conditional probability $w_{\bm s}(u)$ given $U=u$ and sets $X_i=S_i Y_i$.

\subsection{Maximizing and minimizing copulas}\label{subsec:d3-copula}

We now derive the copulas associated with the extremal couplings of Theorems~\ref{thm:attainability} and~\ref{thm:lower-attainability}. Throughout this section, Assumption~\ref{ass:regularity} continues to hold.

\begin{proposition}\label{prop:d3-copula}
Let Assumption~\ref{ass:regularity} hold for $F_1, F_2, F_3$. Let $u_{0, i}= F_i(0)$, $|X_i|\sim G_i$, $U\eqiid V \sim \U[0, 1]$, $I = \id_{\{U > u_{0, 1}\}}$ and $W = G_1\left(|F_1^{-1}(U)|\right)$ with $w= G_1\left(|F_1^{-1}(u)|\right)$ for $u\in(0, 1)$. Define the Borel-measurable branch maps $h_i^+(w) = F_i\left(G_i^{-1}(w)\right)$ and $h_i^-(w) = F_i\left(-G_i^{-1}(w)\right)$, and let $J = \id_{\{V \le s(U)\}}$.

If $\left(p_1(w), p_2(w), p_3(w)\right)\in P_3^+$ for a.e.\ $w$, then the universal upper bound $M_3^{\times}=\E\left[\prod_{i=1}^3 G_i^{-1}(U)\right]$ is attained by $X_i = F_i^{-1}(U_i)$, where
\begin{equation}\label{eq:d3-maximizer-copula}
\begin{aligned}
U_1 &:= U, \\
U_2 &:= J h_2^+(W) + (1-J) h_2^-(W), \\
U_3 &:= I\left[J h_3^+(W) + (1-J) h_3^-(W)\right] + (1-I)\left[J h_3^-(W) + (1-J) h_3^+(W)\right],
\end{aligned}
\end{equation}with mixing function
\begin{equation}\label{eq:d3-maximizer-mixing}
s(u) = \begin{cases}
\dfrac{p_1(w)+p_2(w)+p_3(w)-1}{2 p_1(w)}, & u > u_{0, 1}, \\[8pt]
\dfrac{1-p_1(w)+p_2(w)-p_3(w)}{2 \left(1-p_1(w)\right)}, & u \le u_{0, 1}.
\end{cases}
\end{equation}

If $(p_1(w), p_2(w), p_3(w))\in P_3^-$ for a.e.\ $w$, then the universal lower bound $m_3^{\times}=-\E[\prod_{i=1}^3 G_i^{-1}(U)]$ is attained by $X_i = F_i^{-1}(U_i)$, where
\begin{equation}\label{eq:d3-minimizer-copula}
\begin{aligned}
U_1 &:= U, \\
U_2 &:= J h_2^+(W) + (1-J) h_2^-(W), \\
U_3 &:= I\left[J h_3^-(W) + (1-J) h_3^+(W)\right] + (1-I)\left[J h_3^+(W) + (1-J) h_3^-(W)\right],
\end{aligned}
\end{equation}
with mixing function
\begin{equation}\label{eq:d3-minimizer-mixing}
s(u) = \begin{cases}
\dfrac{p_1(w)+p_2(w)-p_3(w)}{2 p_1(w)}, & u > u_{0, 1}, \\[8pt]
\dfrac{-p_1(w)+p_2(w)+p_3(w)}{2 \left(1-p_1(w)\right)}, & u \le u_{0, 1}.
\end{cases}
\end{equation}
The displayed fractions are used only on the corresponding active branch. On null or inactive branches, $s(u)$ may be chosen arbitrarily in $[0, 1]$.
\end{proposition}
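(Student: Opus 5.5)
The plan is to show that the vector $(X_1,X_2,X_3)$ built from \eqref{eq:d3-maximizer-copula} is a special case of the construction in Theorem~\ref{thm:attainability}. Concretely, we check three things: $|X_i|=G_i^{-1}(W)$ for every $i$, with $W\sim\U[0,1]$; conditional on $W=w$, the sign vector $\bm S=(\mathrm{sign}X_1,\mathrm{sign}X_2,\mathrm{sign}X_3)$ is supported on $E_3$ and has law $w_{\bm s}(w)$ from \eqref{eq:d3-even-weights}; and each $X_i\sim F_i$. Given these, $\prod_i X_i=\prod_i G_i^{-1}(W)\ge 0$ a.s., so $\E[\prod_i X_i]=\E[\prod_i G_i^{-1}(W)]$, which equals the universal bound in \eqref{eq:upper-bound}. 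Lemma~\ref{lem:product-bounds} then gives sharpness. The lower bound is handled the same way, with $O_3$, \eqref{eq:d3-odd-weights} and \eqref{eq:d3-minimizer-mixing}.

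\textbf{Step 1 (magnitudes and sign of $X_1$).} Since $F_1$ is continuous, $W=G_1(|F_1^{-1}(U)|)\sim\U[0,1]$, $|X_1|=G_1^{-1}(W)$ a.s., and $I=\id_{\{X_1>0\}}$ a.s. The key point is that, conditionally on $W=w$, we have $I\sim$ Bernoulli$(p_1(w))$. This follows from Lemma~\ref{lem:sign-bias}, because $H_1(w)=\p(X_1\ge0,W\le w)$. Also, $V$ is independent of $(I,W)$, and $s(U)$ is a function of $(I,W)$, so $\p(J=1\mid I,W)=s$. For $i=2,3$, we have $F_i^{-1}(h_i^{\pm}(w))=\pm G_i^{-1}(w)$ off a null set, using absolute continuity and the connected support. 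Hence $X_i=\pm G_i^{-1}(W)$, with the sign chosen by the branch in use. So $|X_i|=G_i^{-1}(W)$, and the signs are $S_2=2J-1$ and $S_3=S_2$ if $I=1$, $S_3=-S_2$ if $I=0$. In the maximizing case this gives $S_1S_2S_3=(2I-1)^2S_2^2=+1$, so $\bm S\in E_3$. In the minimizing case the roles of $h_3^\pm$ are swapped, so the product is $-1$ and $\bm S\in O_3$.

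\textbf{Step 2 (sign law).} Given $W=w$, the maximizing construction produces the following patterns:
\begin{align*}
\p\bigl(\bm S=(+,+,+)\bigr)&=p_1 s^+=\tfrac{p_1+p_2+p_3-1}{2}, & \p\bigl(\bm S=(+,-,-)\bigr)&=p_1(1-s^+),\\
\p\bigl(\bm S=(-,+,-)\bigr)&=(1-p_1)s^-=\tfrac{1-p_1+p_2-p_3}{2}, & \p\bigl(\bm S=(-,-,+)\bigr)&=(1-p_1)(1-s^-).
\end{align*}
These match the weights in \eqref{eq:d3-even-weights}, and a short algebraic check confirms the complementary entries. For example, $p_1(1-s^+)=\tfrac{1+p_1-p_2-p_3}{2}$. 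The hypothesis $\bm p_w\in P_3^+$ makes all four weights nonnegative. Since the weights on the branch $I=1$ sum to $p_1$, this forces $s^+\in[0,1]$, and the same holds for $s^-$. Branches with $p_1\in\{0,1\}$ have probability zero, so an arbitrary choice of $s$ there is harmless. The conditional probability that $S_i=+1$ is then $\sum_{s_i=+1}w_{\bm s}=p_i(w)$; this uses the defining property of barycentric coordinates in the simplex $P_3^+$.

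\textbf{Step 3 (marginals).} This is the main technical point: we must show that $X_i\sim F_i$ for $i=2,3$. For a Borel set $B\subset(0,\infty)$, we compute
\[
\p(X_i>0,\,|X_i|\in B)=\int \id_{\{G_i^{-1}(w)\in B\}}\,p_i(w)\,\d w=\int_B f_i(y)\,\d y .
\]
The first equality uses Step 2. The second uses the change of variables $y=G_i^{-1}(w)$ together with \eqref{eq:sign-bias}, exactly as in the proof of Lemma~\ref{lem:sign-bias}. The same argument gives $\int_B f_i(-y)\,\d y$ for the negative part. Together these determine the law $F_i$. The delicate points are the a.e.\ identity $F_i^{-1}(F_i(\pm G_i^{-1}(w)))=\pm G_i^{-1}(w)$ and the measurability of $s$, $h_i^\pm$ and $W$. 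Both follow from Assumption~\ref{ass:regularity}, since $F_i$ is strictly increasing on its support, and from the Borel representative of $p_i$ given in Lemma~\ref{lem:sign-bias}.
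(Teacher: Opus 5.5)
Your proposal is correct and follows the paper's route: you match the branch weights $p_1s$, $p_1(1-s)$, $(1-p_1)s$, $(1-p_1)(1-s)$ with the barycentric weights \eqref{eq:d3-even-weights} (resp.\ \eqref{eq:d3-odd-weights}), read off $s\in[0,1]$ from feasibility, and get the marginals from the change-of-variables argument behind Theorem~\ref{thm:attainability}, which the paper only cites and you carry out explicitly. One small correction: the identity $F_i^{-1}(h_i^{\pm}(w))=\pm G_i^{-1}(w)$ can fail on a set of $w$ of positive measure (e.g.\ $h_i^-(w)=0$ for shifted exponentials once $G_i^{-1}(w)>a_i$), but only where that branch has conditional probability $0$ by your Step~2, so $X_i=S_iG_i^{-1}(W)$ still holds a.s.
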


\begin{remark}
    The branch maps $h_i^+(w)$ and $h_i^-(w)$ direct the magnitude quantile $w$ to the uniform scale of coordinate~$i$ on the positive and negative branches, respectively. If one defines the sign-flip map $\tau_i(u)=F_i\left(-F_i^{-1}(u)\right)$ on levels where both branches exist, then $h_i^-(w)=\tau_i\left(h_i^+(w)\right)$. This identity is only notational; the construction uses the branch maps $h_i^\pm$ directly and does not require any smoothness of $\tau_i$. The role of these maps is to absorb the heterogeneity of $G_1$, $G_2$ and $G_3$ into coordinate-specific transformations, so that the sign structure may be governed by a single Bernoulli variable whose bias depends on the sign-bias vector $\left(p_1(w), p_2(w), p_3(w)\right)$. The mixing functions~\eqref{eq:d3-maximizer-mixing} and~\eqref{eq:d3-minimizer-mixing} involve division by $p_1(w)$ and $1-p_1(w)$ respectively. These quantities are strictly positive a.e.\ iff $F_1$ assigns positive mass to both $(-\infty, 0)$ and $(0, \infty)$, that is, when $u_{0, 1}=F_1(0)\in(0, 1)$. Under Assumption~\ref{ass:regularity}, this holds whenever the connected support of $F_1$ is not contained entirely in $[0, \infty)$ or $(-\infty, 0]$.
\end{remark}

Table~\ref{tab:d3-max-patterns} enumerates the four possible pairs of $(I, J)$. For $I = 1$ and the maximizer, coordinates~$2$ and~$3$ are placed on matching branches ($h^+_i$ or $h^-_i$), while they are assigned to different branches if $I = 0$. In contrast, for the minimizer, the third coordinate's branch assignment is reversed. Specifically, when $I = 1$, coordinates~$2$ and~$3$ are placed on opposite branches; otherwise, they are assigned to the same branch. In all four cases, the sign pattern for the maximizer (resp., minimizer) belongs to the even-parity (resp., odd-parity) set $E_3$ (resp., $O_3$), so $X_1 X_2 X_3 \ge 0$ (resp., $X_1 X_2 X_3 \le 0$) a.s., and the absolute values are comonotonic since all three are driven by a common magnitude variable $W$.

\begin{table}[ht]
\centering
\begin{adjustbox}{max width=\textwidth}
\begin{tabular}{ccccccllc}
\toprule
\multirow{2}{*}{$I$} & \multirow{2}{*}{$J$} & \multicolumn{2}{c}{Sign patterns} & \multicolumn{2}{c}{$(U_1, U_2, U_3)$} & \multirow{2}{*}{cond.\ prob.} & \multirow{2}{*}{active when} & \multirow{2}{*}{Fig.~\ref{fig:linear-maximizer}-\ref{fig:linear-minimizer}} \\
\cmidrule(r){3-4} \cmidrule(l){5-6}
& & $\bm s\in E_3$ & $\bm s\in O_3$ & $M_3^{\times}$ & $m_3^{\times}$ &  &  & \\
\midrule
$1$ & $1$ & $(+, +, +)$ & $(+, +, -)$ & $(u, h_2^+(w), h_3^+(w))$ & $(u, h_2^+(w), h_3^-(w))$ & $s(u)$   & $u>u_{0, 1}$ & \textcolor{legcol1}{$\blacksquare$} \\
$1$ & $0$ & $(+, -, -)$ & $(+, -, +)$ & $(u, h_2^-(w), h_3^-(w))$ & $(u, h_2^-(w), h_3^+(w))$ & $1-s(u)$ & $u>u_{0, 1}$ & \textcolor{legcol2}{$\blacksquare$} \\
$0$ & $1$ & $(-, +, -)$ & $(-, +, +)$ & $(u, h_2^+(w), h_3^-(w))$ & $(u, h_2^+(w), h_3^+(w))$ & $s(u)$   & $u\le u_{0, 1}$ & \textcolor{legcol4}{$\blacksquare$} \\
$0$ & $0$ & $(-, -, +)$ & $(-, -, -)$ & $(u, h_2^-(w), h_3^+(w))$ & $(u, h_2^-(w), h_3^-(w))$ & $1-s(u)$ & $u\le u_{0, 1}$ & \textcolor{legcol3}{$\blacksquare$} \\
\bottomrule
\end{tabular}
\end{adjustbox}
\caption{Sign patterns, support legs, and conditional probabilities of the trivariate maximizer and minimizer for general marginals. Each row corresponds to one $(I, J)$ pair; $w = G_1(|F_1^{-1}(u)|)$ is the magnitude quantile associated with $u$. The indicator $I = \id_{\{u > u_{0, 1}\}}$ is deterministic given $U = u$, while $J = \id_{\{V \le s(u)\}}$ is an independent Bernoulli variable with conditional probability $P(J=1\mid U=u) = s(u)$ in \eqref{eq:d3-maximizer-mixing} (resp., \eqref{eq:d3-minimizer-mixing}) for the maximizer (resp., minimizer). The last column indicates the colour of the corresponding support leg in Figures~\ref{fig:linear-maximizer} and \ref{fig:linear-minimizer}.}
\label{tab:d3-max-patterns}
\end{table}

When the marginals are identical, $F_1 = F_2 = F_3 =: F$, the maps $h_i^{\pm}$ coincide and $W$ is a two-to-one function of~$U$, with $u$ and $\tau(u)$ mapping to the same magnitude level. On each branch for the maximizer, the map is invertible: for $U > u_0$ one has $h^+(W) = U$ and $h^-(W) = \tau(U)$, while for $U \le u_0$ the roles reverse. The mixing function becomes $s(u) = (3p(w)-1)/(2p(w))$ on $(u_0, 1)$ and $s(u) = 1/2$ on $(0, u_0]$, and the copula~\eqref{eq:d3-maximizer-copula} places $U_2, U_3 \in \{U, \tau(U)\}$.   

In the case of the minimizer, the mixing function becomes $s(u) = 1/2$ on $(u_0, 1)$ and $s(u) = p(w)/(2(1-p(w)))$ on $(0, u_0]$. When $I = 1$, the Bernoulli variable is symmetric, so the single negative sign is placed on coordinate~$2$ or~$3$ with equal probability. When $I = 0$, the two-positive pattern $(-, +, +)$ carries weight $p(w)/(2(1-p(w)))$, while the all-negative pattern $(-, -, -)$ carries the remaining weight. Thus $s(u)\in[0, 1]$ on the negative branch iff $p(w)\le 2/3$, recovering the odd-parity threshold of Proposition~\ref{cor:iid} for $d = 3$.

\section{A recursive parity construction}\label{sec:recursive-parity}
For $d\ge 4$, the situation is fundamentally different from the trivariate case. For a given sign-bias vector $\bm p_u$, the set of compatible sign weights is no longer a single point, and its dimension generally grows with $d$. As a result, the sign-weight function $w_{\bm s}(u)$ is not uniquely determined by $\bm p_u$. Indeed, any convex decomposition of $\bm p_u$ into vertices of $P_d^+$ or $P_d^-$ produces a valid collection of sign weights, and different decompositions lead to different copulas, all of which attain the same extremal expected product. By contrast, this non-uniqueness does not occur in dimension three: since $P_3^+$ is a simplex, each sign-bias vector admits a unique decomposition, and hence the associated extremal copula is uniquely determined.

In this section, we show how to construct a $d$-dimensional extremal copula recursively from the explicit trivariate construction developed in Section~\ref{sec:d3}. The key idea is that the constructions of Section~\ref{sec:d3} possess a recursive structure that extends naturally to higher dimensions. More precisely, membership in the parity polytope can be decomposed by conditioning on a single pivot coordinate, which provides the basis for the recursive construction.

\begin{proposition}\label{prop:recursive-parity}
Let $d\ge 3$ and $\bm p=(p_1, \dots, p_d)\in[0, 1]^d$. If $p_1\in(0, 1)$, then $\bm p\in P_d^+$ iff there exist $\bm q^+\in P_{d-1}^+$ and $\bm q^-\in P_{d-1}^-$ such that
\begin{equation}\label{eq:recursive-decomposition}
p_j=p_1 q_j^++(1-p_1) q_j^-, \qquad j=2, \dots, d.
\end{equation}
If $p_1=1$, then $\bm p\in P_d^+$ iff $(p_2, \dots, p_d)\in P_{d-1}^+$. If $p_1=0$, then $\bm p\in P_d^+$ iff $(p_2, \dots, p_d)\in P_{d-1}^-$.

The odd-parity analogue holds with the roles of $P_{d-1}^+$ and $P_{d-1}^-$ interchanged: $\bm p\in P_d^-$ iff there exist $\bm q^+\in P_{d-1}^-$ and $\bm q^-\in P_{d-1}^+$ satisfying~\eqref{eq:recursive-decomposition}.
\end{proposition}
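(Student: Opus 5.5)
The plan is to use the probabilistic meaning of the parity polytopes: $\bm p\in P_d^+$ iff there is a random sign vector $\bm S$ supported on $E_d$ with $\p(S_i=+1)=p_i$ for all $i$. This is just the definition of the convex hull, with barycentric weights read as probabilities. The recursion then comes from conditioning on the pivot $S_1$.

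For the forward direction with $p_1\in(0,1)$, take such an $\bm S$ and condition on $S_1$. Since $\prod_i S_i=+1$ a.s., on $\{S_1=+1\}$ the subvector $(S_2,\dots,S_d)$ lies in $E_{d-1}$. On $\{S_1=-1\}$ it lies in $O_{d-1}$. Both events have positive probability $p_1$ and $1-p_1$. Set
$$q_j^+:=\p(S_j=+1\mid S_1=+1),\qquad q_j^-:=\p(S_j=+1\mid S_1=-1).$$
Then $\bm q^+$ is a mixture of the vertices $v(\bm t)$, $\bm t\in E_{d-1}$, with weights $w_{(+,\bm t)}/p_1$, so $\bm q^+\in P_{d-1}^+$. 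Likewise $\bm q^-\in P_{d-1}^-$. The law of total probability gives \eqref{eq:recursive-decomposition}.

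For the converse, write $\bm q^+=\sum_{\bm t\in E_{d-1}}\alpha_{\bm t}v(\bm t)$ and $\bm q^-=\sum_{\bm t\in O_{d-1}}\beta_{\bm t}v(\bm t)$ as convex combinations. Put weight $p_1\alpha_{\bm t}$ on $(+1,\bm t)\in E_d$ and $(1-p_1)\beta_{\bm t}$ on $(-1,\bm t)\in E_d$; both patterns have even parity. These weights are nonnegative and sum to one. The first coordinate of the mixture equals $p_1$, and coordinate $j\ge 2$ equals $p_1q_j^++(1-p_1)q_j^-=p_j$. Hence $\bm p\in P_d^+$.

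The boundary cases follow the same pattern.
\begin{itemize}
\item If $p_1=1$, every vertex carrying positive weight has $s_1=+1$, because $p_1$ is the total weight on those vertices. The tail of each such vertex is in $E_{d-1}$, so $(p_2,\dots,p_d)\in P_{d-1}^+$. Conversely, prefixing $+1$ to the vertices of any representation in $P_{d-1}^+$ gives a representation in $P_d^+$.
\item If $p_1=0$, the same argument with $s_1=-1$ places the tails in $O_{d-1}$, so $(p_2,\dots,p_d)\in P_{d-1}^-$.
\end{itemize}

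The odd-parity statement is identical, with $O_d$ in place of $E_d$. On $\{S_1=+1\}$ the tail is now odd, and on $\{S_1=-1\}$ it is even, which swaps the roles of $P_{d-1}^\pm$.

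The argument is elementary. The only points needing care are the division by $p_1$ and $1-p_1$, which is why those cases are separated, and the restriction $d\ge3$. That restriction ensures $P_{d-1}^\pm$ are the intended polytopes in dimension at least $2$; for $d-1=1$ one would have $P_1^+=\{1\}$ and $P_1^-=\{0\}$, and the same proof still goes through formally.
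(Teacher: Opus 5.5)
Your proposal is correct and follows essentially the same route as the paper's proof: write $\bm p$ as a convex combination of even-parity vertices, group the weights by the pivot sign $s_1$, and normalize by $p_1$ and $1-p_1$ to get $\bm q^\pm$. For the converse you reassemble the weights $p_1\alpha_{\bm t}$ on $(+1,\bm t)$ and $(1-p_1)\beta_{\bm t}$ on $(-1,\bm t)$, exactly as the paper does; your treatment of the boundary cases $p_1\in\{0,1\}$, where the total weight on the excluded half must vanish, is slightly more explicit than the paper's one-line remark.
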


The decomposition extends to arbitrary dimension by induction: the $d$-dimensional parity-polytope membership problem reduces to a pair of $(d-1)$-dimensional problems, one on each branch of the pivot coordinate, down to the base case $d=2$. The choice of pivot is arbitrary; different pivots produce different recursive decompositions that do not affect the feasibility of $\bm p_u$ in the parity polytope but reorganize the conditional structure. When $\bm p_u$ depends measurably on $u$, each conditional vector $\bm q^{\pm}(u)$ inherits measurability from the measurable selection of Lemma~\ref{lem:measurable-selection}, so the recursive construction yields a valid measurable coupling.

Combined with the explicit trivariate copulas in Section~\ref{subsec:d3-copula}, Proposition~\ref{prop:recursive-parity} yields explicit $d$-dimensional extremal copulas whenever the parity-feasibility conditions hold. We describe the maximizer for $d=4$; the minimizer is obtained by interchanging the residual parity on each branch.

Fix coordinate~$1$ as the pivot. Let $U\eqiid V\sim\U[0, 1]$, set $W:=G_1(|F_1^{-1}(U)|)$, and define $I:=\id_{\{U>u_{0, 1}\}}$, so that the sign of $X_1$ is determined by~$I$. Conditioning on the value of~$S_1$ produces two trivariate subproblems on coordinates $(2, 3, 4)$:
\begin{itemize}
\item On the branch $S_1=+1$ (i.e., $I=1$), the residual sign vector $(S_2, S_3, S_4)$ must have even parity. The conditional sign biases are $\bm q^+(u)=(q_2^+(u), q_3^+(u), q_4^+(u))\in P_3^+$, and the trivariate even-parity copula in \eqref{eq:d3-maximizer-copula} applies: coordinate~$2$ is the residual pivot, coordinate~$3$ is randomized conditionally on that pivot, and coordinate~$4$ is fixed by parity.
\item On the branch $S_1=-1$ (i.e., $I=0$), the residual vector must have odd parity. The conditional sign biases are $\bm q^-(u)\in P_3^-$, and the trivariate odd-parity copula in \eqref{eq:d3-minimizer-copula} applies with the same residual pivot and randomization.
\end{itemize}
The conditional biases $\bm q^{\pm}(u)$ must satisfy the compatibility condition~\eqref{eq:recursive-decomposition}; any pair $(\bm q^+(u), \bm q^-(u))$ with $\bm q^+(u)\in P_3^+$, $\bm q^-(u)\in P_3^-$, and $p_j(u)=p_1(u)q_j^+(u)+(1-p_1(u))q_j^-(u)$ for $j=2, 3, 4$ yields a valid extremal copula. Since $P_3^+$ and $P_3^-$ are simplices, the barycentric weights on each branch are uniquely determined by $\bm q^{\pm}(u)$, and the mixing functions $s^{\pm}$ are explicit functions of the conditional biases. Different choices of $(\bm q^+(u), \bm q^-(u))$ satisfying the compatibility condition produce different copulas that all attain the same extremal expected product; the nonuniqueness of the $d$-dimensional maximizing and minimizing copulas is therefore parameterized by this choice.

Let $\bm R^+(w, v;\bm q)$ and $\bm R^-(w, v;\bm q)$ denote the trivariate even- and odd-parity sign selectors determined by \eqref{eq:d3-even-weights} and \eqref{eq:d3-odd-weights}, respectively. Concretely, $v$ selects among the four patterns by partitioning $[0, 1]$ into consecutive intervals whose lengths are their barycentric weights. Thus, $\bm R^\pm$ is categorical rather than Bernoulli and may select all four patterns. The four-dimensional maximizer is then generated by
$$S_1:=2I-1, \qquad (S_2, S_3, S_4):=I\bm R^+(W, V;\bm q^+(W))+(1-I)\bm R^-(W, V;\bm q^-(W)),$$
$$U_1:=U, \qquad U_i:=\id_{\{S_i=+1\}}h_i^+(W)+\id_{\{S_i=-1\}}h_i^-(W), \quad i=2, 3, 4, \qquad X_i:=F_i^{-1}(U_i).$$
For the minimizer, interchange the two residual selectors:
$$(S_2, S_3, S_4):=I\bm R^-(W, V;\bm q^+(W))+(1-I)\bm R^+(W, V;\bm q^-(W)).$$
Thus each branch of the four-dimensional construction has the trivariate form of Section~\ref{subsec:d3-copula}.

\section{Numerical illustration}\label{sec:numerical}
In this section, we numerically illustrate the implications of the sharpness characterizations in \eqref{eq:attain-eq} and \eqref{eq:lower-attain-eq}. When the function $\psi$ in \eqref{sharp bounds} is supermodular, the BRA provides an efficient method to approximate the bounds $m_d^{\psi}$ and $M_d^{\psi}$. Moreover, the BRA is also valid for certain non-supermodular functions, such as the product function; see, e.g., \cite{bernard2023coskewness}. Therefore, we use it in this section to approximate $m_d^{\times}$ and $M_d^{\times}$. All numerical approximations reported in this section are rounded to two decimal places. We first consider marginal distributions satisfying the parity conditions, for which the BRA approximations agree with the universal bounds, and then violate one condition at a time to illustrate the resulting gaps.

\subsection{Examples of sharpness}
We begin with heterogeneous bounded marginals in which the even- and odd-parity conditions of Theorems~\ref{thm:attainability} and~\ref{thm:lower-attainability} hold simultaneously. Thus both universal bounds are attainable.

\begin{example}
For $i=1, 2, 3$, let $X_i$ have density
$$f_i(x) = \frac{1+\theta_i x}{2}, \qquad x\in[-1, 1],$$
where $\theta_i\in(-1, 1)$ are parameters. The sign of $\theta_i$ controls the skewness of $F_i$: positive values produce a linearly increasing density, negative values a linearly decreasing density, and $\theta_i=0$ gives the uniform density on $[-1, 1]$. The mean of $F_i$ is $\theta_i/3$.

One may show that $p_i(y) = (1+\theta_i y)/2$ for $y\in(0, 1)$ and that $|X_i|\sim\mathrm{U}[0, 1]$ regardless of $\theta_i$, so the heterogeneity of the marginals enters exclusively through the sign structure, not the magnitudes. Since $p_i$ is monotone in $y$, all parity constraints are tightest at $y=1$. One may verify that both parity conditions hold simultaneously iff
\begin{equation}\label{eq:linear-both-feasibility}
|\theta_1+\theta_2+\theta_3|\le 1 \quad\text{and}\quad |\theta_i+\theta_j-\theta_k|\le 1 \quad\text{for every permutation } (i, j, k) \text{ of } (1, 2, 3).
\end{equation}
The conditions in~\eqref{eq:linear-both-feasibility} are jointly equivalent to $|\theta_1|+|\theta_2|+|\theta_3|\le 1$; the feasibility region is a cross-polytope with nonempty interior, displayed in Figure~\ref{fig:parity-polytope}. Following Theorems~\ref{thm:attainability} and \ref{thm:lower-attainability}, $M_3^{\times} = \E[U^3] = 1/4$ and $m_3^{\times} = -1/4$. Specifically, these values hold for any feasible $(\theta_1, \theta_2, \theta_3)$; the heterogeneity of the marginals affects only the sign-weight functions $w_{\bm s}(u)$, and therefore the shape of the extremal copulas. Figures~\ref{fig:linear-maximizer} and \ref{fig:linear-minimizer} display the corresponding extremal copula supports when $(\theta_1, \theta_2, \theta_3)=(0.4, 0.2, -0.3)$.

\begin{figure}[H]
\color{black}
\centering
\begin{subfigure}[t]{0.32\textwidth}
  \centering
  \includegraphics[width=\textwidth]{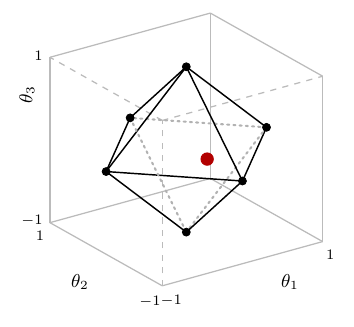}
  \caption{Feasibility polytope.}
  \label{fig:parity-polytope}
\end{subfigure}\hfill
\begin{subfigure}[t]{0.32\textwidth}
  \centering
  \includegraphics[width=\textwidth]{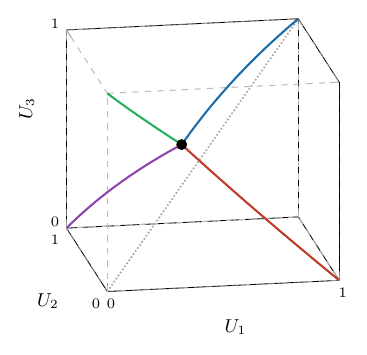}
  \caption{Maximizing copula.}
  \label{fig:linear-maximizer}
\end{subfigure}\hfill
\begin{subfigure}[t]{0.32\textwidth}
  \centering
  \includegraphics[width=\textwidth]{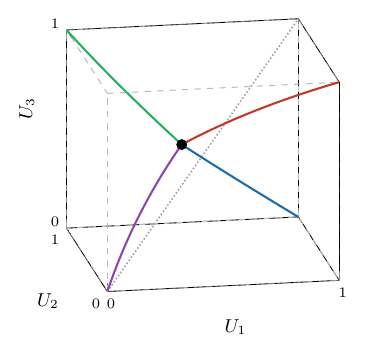}
  \caption{Minimizing copula.}
  \label{fig:linear-minimizer}
\end{subfigure}
\caption{Linear densities with $(\theta_1, \theta_2, \theta_3)=(0.4, 0.2, -0.3)$. (a)~Feasibility region: the cross-polytope $|\theta_1|+|\theta_2|+|\theta_3|\le 1$; the red dot represents the specific numerical example. (b)~Support of the maximizing copula in $[0, 1]^3$; each colour corresponds to one even-parity sign pattern. (c)~Support of the minimizing copula; each colour corresponds to one odd-parity sign pattern. In (b) and (c), the black dot marks the junction $(u_{0, 1}, u_{0, 2}, u_{0, 3})=(0.4, 0.45, 0.575)$, where $u_{0, i}=F_i(0)$ is the probability that $X_i$ is negative. The four support legs in each panel meet at this point because it is the unique level at which all margins change sign simultaneously. Taken together, the maximizing and minimizing legs reach all eight corners of the cube $[0, 1]^3$.}
\label{fig:linear-densities}
\end{figure}

\begin{table}[ht]
    \centering
    \begin{threeparttable}
        \caption{Sharp bounds and BRA approximations for the three-dimensional
        product problem.}
        \label{tab:bra-comparison}

        \renewcommand{\arraystretch}{1.15}

        \begin{tabular*}{\textwidth}
    {@{\extracolsep{\fill}}lcccc@{}}
            \toprule
            & \multicolumn{2}{c}{Sharp bounds}
            & \multicolumn{2}{c}{BRA approximations} \\
            \cmidrule(lr){2-3}
            \cmidrule(lr){4-5}
            $(\theta_1, \theta_2, \theta_3)$
            & $m_3^{\times}=-\E[U^3]$
            & $M_3^{\times}=\E[U^3]$
            & $\widetilde{m}_3^{\times}$
            & $\widetilde{M}_3^{\times}$ \\
            \midrule

            $(0.4, 0.2, -0.3)$
            & $-0.25$
            & $0.25$
            & $-0.25$
            & $0.25$ \\

            \addlinespace

            $(-0.1, -0.2, 0.3)$
            & $-0.25$
            & $0.25$
            & $-0.25$
            & $0.25$ \\

            \addlinespace

            $(0.3, 0.2, 0.1)$
            & $-0.25$
            & $0.25$
            & $-0.25$
            & $0.25$ \\

            \bottomrule
        \end{tabular*}

    \end{threeparttable}
\end{table}

In Table~\ref{tab:bra-comparison}, we compare the sharp bounds with the BRA approximations. The BRA approximations are averages over $1{,}000$ replications with $10^4$ simulations for each marginal distribution. The BRA approximations are close to the exact lower and upper bounds for all parameter configurations considered, numerically confirming that both proposed bounds are sharp in this example.
\end{example}

\subsection{Counterexamples to sharpness}

We now return to the heterogeneous marginals with linear densities and choose parameter vectors for which one or both parity conditions fail. This yields three cases: only the universal lower bound is sharp, only the universal upper bound is sharp, and neither universal bound is sharp.

\begin{table}[htbp]
    \centering
    \begin{threeparttable}
        \caption{Analytic conclusions and BRA approximations for the three-dimensional
        product problem. Dashed entries means the universal bounds are not achieved.
        }
        \label{tab:bra-comparison for counter examples}

        \renewcommand{\arraystretch}{1.15}

        \begin{tabular*}{\textwidth}
    {@{\extracolsep{\fill}}lcccc@{}}
            \toprule
                & \multicolumn{2}{c}{Sharp bounds}
            & \multicolumn{2}{c}{BRA approximations} \\
            \cmidrule(lr){2-3}
            \cmidrule(lr){4-5}
            $(\theta_1, \theta_2, \theta_3)$
            & $m_3^{\times}=-\E[U^3]$
            & $M_3^{\times}=\E[U^3]$
            & $\widetilde{m}_3^{\times}$
            & $\widetilde{M}_3^{\times}$ \\
            \midrule

            $(-0.99, -0.98, -0.97)$
            & $-0.25$
            & --
            & $-0.25$
            & $0.18$ \\

            \addlinespace

            $(0.99, 0.98, 0.97)$
            & --
            & $0.25$
            & $-0.18$
            & $0.25$ \\

            \addlinespace

            $(0.99, -0.99, 0)$
            & --
            & --
            & $-0.24$
            & $0.24$ \\

            \bottomrule
        \end{tabular*}
    \end{threeparttable}
\end{table}

Table~\ref{tab:bra-comparison for counter examples} displays the three possible sharpness patterns when at least one parity condition fails. For $(\theta_1, \theta_2, \theta_3)=(-0.99, -0.98, -0.97)$, the universal lower bound is sharp, and the BRA lower approximation agrees with $-0.25$, whereas the BRA upper approximation is $0.18$, strictly below the universal upper bound of $0.25$. For the reflected parameter vector $(0.99, 0.98, 0.97)$, the opposite pattern holds: the universal upper bound is sharp, while the BRA lower approximation is $-0.18$, strictly above the universal lower bound of $-0.25$. Finally, for $(0.99, -0.99, 0)$, neither universal bound is sharp, and the BRA approximations $-0.24$ and $0.24$ lie strictly inside the universal interval $[-0.25, 0.25]$. These numerical gaps illustrate the loss of sharpness when the corresponding parity condition fails, while the BRA continues to approximate the actual extremal values.

\section{Application to coskewness measurement}\label{sec:coskewness-application}

Mean--variance portfolio optimization and the CAPM describe dependence between assets through covariance and therefore do not capture asymmetric higher-order comovements \citep{markowitz1952portfolio, sharpe1964capital}. The mean--variance decision criterion is obtained as a second-order Taylor approximation of the quadratic expected utility. When one wants to consider criteria with higher-order terms, one must also consider higher-order cross-product terms such as the coskewness. \citet{harvey2000conditional} show that it helps explain cross-sectional variation in expected returns where the traditional CAPM performs poorly. It is therefore a practically important source of risk beyond covariance. Because the feasible range of raw coskewness depends on the marginal distributions, we use the lower and upper dependence-uncertainty bounds to map it onto the fixed interval $[-1, 1]$, yielding a marginally calibrated measure that can be compared across portfolios and over time.

Let $X_i\sim F_i$, $i=1, 2, 3$, denote the returns of three stocks, with means $\mu_i$ and standard deviations $\sigma_i>0$. Their coskewness is
$$S(X_1, X_2, X_3):= \frac{\E[(X_1-\mu_1)(X_2-\mu_2)(X_3-\mu_3)]} {\sigma_1\sigma_2\sigma_3}.$$
When the marginal return distributions are known but their dependence structure is not, the corresponding dependence-uncertainty bounds are
\begin{subequations}
\begin{align}
\underline S&:=\inf_{X_i\sim F_i, \ i=1, 2, 3}S(X_1, X_2, X_3),\label{eq:coskewness-min}\\
\overline S&:=\sup_{X_i\sim F_i, \ i=1, 2, 3}S(X_1, X_2, X_3).\label{eq:coskewness-max}
\end{align}
\end{subequations}
These quantities have a direct portfolio interpretation. If $X_{\bm a}:=\sum_{i=1}^3a_iX_i$ is a three-asset portfolio return, then the term involving all three assets in the third central moment of the portfolio return is
\begin{equation}\label{eq:portfolio-coskewness-term}
6a_1a_2a_3\E[(X_1-\mu_1)(X_2-\mu_2)(X_3-\mu_3)]
=6a_1a_2a_3\sigma_1\sigma_2\sigma_3S(X_1, X_2, X_3).
\end{equation}
For a portfolio without short selling, negative coskewness lowers $\mathbb{E}[(X_{\bm a}-\mathbb{E}[X_{\bm a}])^3]$ and therefore represents an adverse dependence contribution. Thus, two portfolios with the same marginal return distributions can have materially different higher-order risk because their dependence structures differ. Since the marginal distributions can generally be estimated more reliably than the joint dependence, $[\underline S, \overline S]$ provides the marginally admissible range of coskewness values. $\underline S$ yields the most adverse value of the contribution in~\eqref{eq:portfolio-coskewness-term}, whereas $\overline S$ yields the most favorable. These bounds therefore provide the marginally calibrated endpoints used below to locate the observed coskewness within its feasible range and map it onto a common scale.

Define the standardized returns by
$$Y_i:=\frac{X_i-\mu_i}{\sigma_i}\sim H_i, \qquad H_i^{-1}(u)=\frac{F_i^{-1}(u)-\mu_i}{\sigma_i}, \qquad i=1, 2, 3.$$
Then $S(X_1, X_2, X_3)=\E[Y_1Y_2Y_3]$, so Problems~\eqref{eq:coskewness-min} and~\eqref{eq:coskewness-max} are precisely the product-expectation problems for the standardized marginals $H_i$. Under Assumption~\ref{ass:regularity}, let $G_i$ be the df of $|Y_i|$, let $f_i$ and $h_i$ be the densities of $F_i$ and $H_i$, respectively, and define the standardized sign-bias function
$$
p_i(u):=
\frac{h_i(G_i^{-1}(u))}{h_i(G_i^{-1}(u))+h_i(-G_i^{-1}(u))}
=\frac{f_i(\mu_i+\sigma_iG_i^{-1}(u))}
{f_i(\mu_i+\sigma_iG_i^{-1}(u))+f_i(\mu_i-\sigma_iG_i^{-1}(u))}.
$$
Thus, $p_i(u)$ is the conditional probability that a standardized deviation of magnitude $G_i^{-1}(u)$ lies above, rather than below, the mean of the $i^{\mathrm{th}}$ return. The even- and odd-parity conditions applied to these sign-bias functions determine whether the universal upper and lower coskewness bounds are sharp. When either condition fails, the corresponding universal bound is not sharp, and the true extremal coskewness can instead be approximated using the BRA.

Coskewness depends on the marginal distributions, so its magnitude cannot be compared directly across different sets of marginals. We therefore use its lower and upper bounds to map its feasible range onto the fixed interval $[-1, 1]$ and define the following standardized measure of comovement.
\begin{definition}[Standardized coskewness]\label{standardized coskewness}
	Let $X_i\sim F_{i}$, $i=1, 2, 3$. Assume that $-\infty<\underline{S}<\overline{S}<\infty$. The standardized coskewness of $X_1$, $X_2$ and $X_3$ denoted by $SS(X_1, X_2, X_3)$ is defined as
	\begin{equation} 
		SS(X_1, X_2, X_3)=\frac{2}{S(X_1, X_2, X_3)-(\underline{S}+\overline{S})}{\overline{S}-\underline{S}}.
		\label{equation for stand coskewness} 
	\end{equation}
    If $p_i(u)$ satisfy the conditions in \eqref{eq:d3-even-weights} and \eqref{eq:d3-odd-weights} for a.e.\ $u\in(0, 1)$ simultaneously, then $\underline{S}=-B_S$ and $\overline{S}=B_S$, where $B_S:=\E[\prod_{i=1}^3 G_i^{-1}(U)]$, and $$SS(X_1, X_2, X_3)=\frac{S(X_1, X_2, X_3)}{B_S}.$$
\end{definition}

Using the same data for Figure~\ref{fig:rolling-three-stock-coskewness}, Figure~\ref{fig:rolling-three-stock-stand-coskewness} shows the empirical standardized coskewness with a 60-trading-day rolling window from January 2005 to December 2025. Unlike the raw coskewness in Figure~\ref{fig:rolling-three-stock-coskewness}, whose feasible range changes with the marginal distributions, each standardized value lies in the fixed interval $[-1, 1]$. By construction, $SS(X_1, X_2, X_3)$ rescales the normalized distance $(S(X_1, X_2, X_3)-\underline{S})/(\overline{S}-\underline{S})$ from the lower bound onto this fixed interval, so $-1$ and $1$ correspond to the lower and upper bounds, respectively, while zero corresponds to the midpoint of the feasible interval. The transformation therefore retains a signed interpretation: negative values place the observed coskewness closer to the lower bound, whereas positive values place it closer to the upper bound. Because the bounds are recomputed for each 60-day window, standardized coskewness provides a consistent measure of comovement, allowing its level and trend to be compared across time.
\begin{figure}[H]
  \centering
  \includegraphics[width=\textwidth]{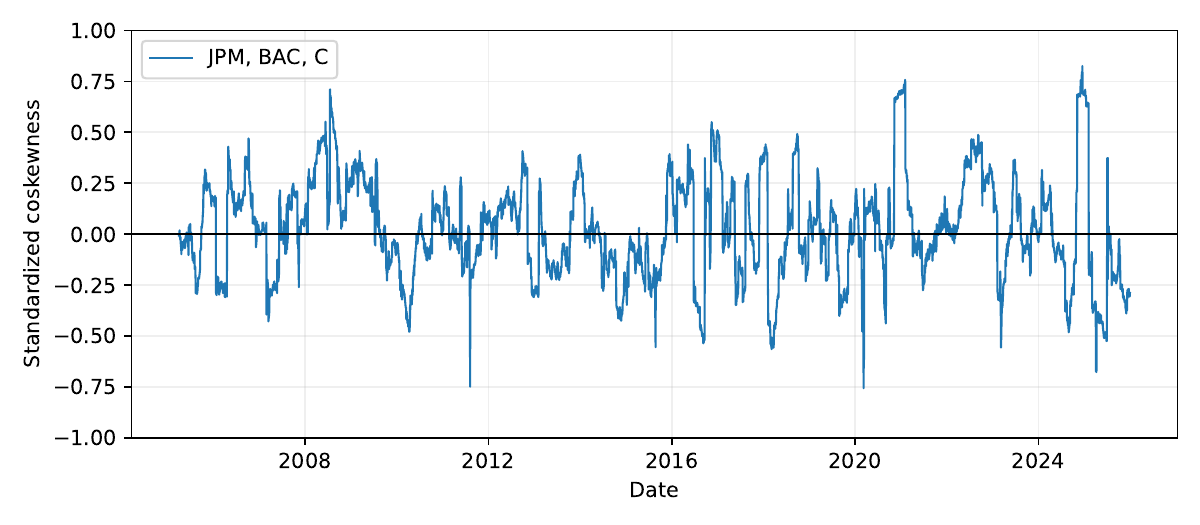}
  \caption{Rolling standardized coskewness of daily returns on three financial institutions, JPM, BAC, and C, computed over 60-trading-day windows. The standardized coskewness is computed as in Equation~\eqref{equation for stand coskewness}, where $X_i$ denotes
the companies' daily returns and risk bounds ($\underline{S}$ and $\overline{S}$) are computed by the BRA algorithm. Returns are calculated from adjusted closing prices from Yahoo Finance over January 2005--December 2025.}
  \label{fig:rolling-three-stock-stand-coskewness}
\end{figure}

\section{Conclusion}

In this paper, we establish necessary and sufficient conditions for the universal upper and lower bounds on $\E[X_1\cdots X_d]$ to be sharp when only the marginal distributions are specified. The analysis relies on decomposing the product's magnitudes and signs. That is, the pointwise inequality $\prod X_i\le\prod|X_i|$ reduces the problem to identifying a comonotonic coupling of the absolute values and coordinating the signs. Theorem~\ref{thm:attainability} (resp.\ \ref{thm:lower-attainability}) shows that the universal upper (resp.\ lower) bound is attained iff the marginal sign-bias vector $\bm p_u$ belongs to the even-parity (resp.\ odd-parity) polytope $P_d^+$ (resp.\ $P_d^-$) for a.e. quantile level $u$. When feasibility holds, a piecewise-affine selection argument converts the pointwise sign weights into an explicit coupling that achieves the bound. For identical marginals, the polytope conditions collapse to scalar thresholds on the common sign-bias function. In this case, the necessary and sufficient conditions are determined by the dimension $d$. Moreover, when $d=3$, we explicitly construct the maximizing and minimizing copulas for arbitrary continuous marginal distributions. Finally, a recursive parity decomposition approach reduces $d$-dimensional polytope membership to a pair of $(d-1)$-dimensional problems. Composing this reduction with the trivariate copulas yields explicit four-dimensional maximizing and minimizing copulas.

The product function $\psi(x_1, \dots, x_d)=\prod_{i=1}^{d}x_i$ is neither supermodular nor submodular in general, so the extremization problems \eqref{eq:md} and \eqref{eq:Md} fall outside the classical Monge--Kantorovich framework in which monotone rearrangements are optimal. For supermodular objectives, the comonotonic coupling is the universal maximizer, but the sign-recombination structure of the product requires a fundamentally different mechanism. The main finding of this paper is that attainability of the universal product bounds can be characterized through parity-polytope feasibility and that the corresponding feasible weights can be used to construct extremal couplings. The problems may nonetheless be cast as multi-marginal optimal transport problems in the sense of \citet{kellerer1984duality} and \citet{pass2015multi}, with the product as cost function. The parity-polytope characterization then provides a structural description of the optimal transport plans.

When the corresponding parity-feasibility conditions fail, the universal bounds are no longer sharp. In that case, the true extremal values $M_d^{\times}$ and $m_d^{\times}$ lie strictly between $-\E\left[\prod_{i=1}^d G_i^{-1}(U)\right]$ and $\E\left[\prod_{i=1}^d G_i^{-1}(U)\right]$. Determining the exact extremal expected products in such cases, together with the dependence structures of the associated optimal couplings, remains an open problem. In addition, our analysis is restricted to absolutely continuous marginal distributions. Extending these results to discrete marginals is another interesting open direction.

\section*{Acknowledgements}

CBW acknowledges financial support from the Natural Sciences and Engineering Research Council of Canada (RGPIN-2025-06879).

\bibliographystyle{apalike}
\bibliography{ref}

\appendix

\section{Examples}\label{sec:examples}

This appendix collects additional examples in dimension $d=3$, illustrating the extremal copula constructions of Section~\ref{sec:d3}. The first two involve shifted exponential marginals with support on $[-a, \infty)$ for some $a>0$; in this case, the maximizer is achievable, but the minimizer is not. The third and fourth treat non-centred normal and Student-t marginals respectively, in which either the maximizer or the minimizer is achievable, depending on the parameters of the marginal distribution.

\subsection{Homogeneous shifted exponentials}\label{subsec:shifted-exp}

Assume $d=3$ and let $X_i+a \sim \mathrm{Exp}(\lambda)$, for $\lambda>0, \ a>0$ and $i=1, 2, 3.$ Then, for $x \ge -a$ and $\quad u\in(0, 1)$, the common marginal cdf and quantile functions are respectively
$$F(x)=1-e^{-\lambda(x+a)}  \text{ and } F^{-1}(u)=-\frac{1}{\lambda}\log(1-u)-a.$$

The following construction gives the maximizing coupling.

\begin{proposition}\label{prop:shifted-exp-maximizer}
Let $X_i\sim F$ be such that $|X_i|\sim G$ for $i=1, 2, 3$, and let $U\eqiid V\sim \U[0, 1]$. Let $u_0=1-e^{-\lambda a}$ and $u_a=1-c$ with $c=e^{-2\lambda a}$. Iff $a\le \frac{\log 2}{2\lambda}$, the universal upper bound
$M_3^{\times}=\E\left[G^{-1}(U)^3\right]$ is attained by $X_i:=F^{-1}(U_i)$ with 
\begin{equation}\label{eq:shiftedexp-U123}
U_1=U, \qquad
U_2=J U+(1-J) \tau(U), \qquad
U_3=I U_2+(1-I)\left(J \tau(U)+(1-J) U\right),
\end{equation}where $I=\id_{\{U>u_0\}}$, $J=\id_{\{V\le s(U)\}}$ with the mixing function
\begin{equation}\label{eq:shiftedexp-s}
s(u)=
\begin{cases}
\dfrac{1}{2}, & 0\le u\le u_0, \\[0.6em]
1-\dfrac{c}{2(1-u)^2}, & u_0<u\le u_a, \\[0.9em]
1, & u_a<u\le 1,
\end{cases}
\end{equation} and 
\begin{equation}\label{eq:shiftedexp-f}
\tau(u):=
  \begin{cases}
  1-\dfrac{c}{1-u}, & u\in[0, u_a], \\[0.8em]
  u, & u\in(u_a, 1].
  \end{cases}
\end{equation}
\end{proposition}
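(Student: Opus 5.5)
We apply Proposition~\ref{cor:iid} with $d=3$ (odd case) to the necessity and sufficiency, and specialize the trivariate construction of Proposition~\ref{prop:d3-copula} to identical marginals to verify the coupling. Everything rests on computing the common sign-bias function $p$ explicitly.

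\textbf{Sign bias and the threshold.} The density is $f(x)=\lambda e^{-\lambda(x+a)}$ on $[-a,\infty)$. For $0<y<a$ both $\pm y$ lie in the support, so by \eqref{eq:sign-bias}
$$p=\frac{e^{-\lambda(y+a)}}{e^{-\lambda(y+a)}+e^{-\lambda(a-y)}}=\frac{1}{1+e^{2\lambda y}},$$
where $y=G^{-1}(u)$. For $y>a$ only $+y$ is in the support, so $p=1$. On $(0,a)$ the map $y\mapsto p$ is continuous and strictly decreasing, with infimum $1/(1+e^{2\lambda a})$ attained in the limit $y\uparrow a$. Proposition~\ref{cor:iid}(2) says the upper bound is sharp iff $p(u)\ge 1/3$ for a.e.\ $u$. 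By the monotonicity just noted, this holds iff $1/(1+e^{2\lambda a})\ge 1/3$, i.e.\ $e^{2\lambda a}\le 2$, i.e.\ $a\le \log 2/(2\lambda)$. This gives the ``iff'', including necessity, since Theorem~\ref{thm:attainability} shows attainment forces $\bm p_u\in P_3^+$.

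\textbf{Identifying the coupling.} For $u>u_0=F(0)$ write $x=F^{-1}(u)>0$. Then $e^{\lambda x}=e^{-\lambda a}/(1-u)$, and the point $-x$ lies in the support iff $x\le a$, i.e.\ iff $u\le u_a$. There the sign-flip map of the Remark after Proposition~\ref{prop:d3-copula} is
$$\tau(u)=F(-x)=1-e^{-\lambda(a-x)}=1-\frac{c}{1-u}.$$
Past $u_a$ the negative branch is absent, and setting $\tau(u)=u$ is harmless because it carries weight $s=1$ there. By the identical-marginal discussion in Section~\ref{sec:d3}, on $U>u_0$ we have $h^+(W)=U$ and $h^-(W)=\tau(U)$, with roles reversed on $U\le u_0$. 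Moreover $s=(3p-1)/(2p)$ on the positive branch. Substituting $p=1/(1+e^{2\lambda x})$ gives
$$s=1-\frac{e^{2\lambda x}}{2}=1-\frac{c}{2(1-u)^2}\quad\text{on } (u_0,u_a],\qquad s=1 \quad\text{on } (u_a,1].$$
On the negative branch, $s=1/2$.

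\textbf{Matching \eqref{eq:shiftedexp-U123} to the table.} For $I=1$, $U_2=U_3\in\{U,\tau(U)\}$ with probabilities $s,1-s$, giving patterns $(+,+,+)$ and $(+,-,-)$. For $I=0$, $U$ is itself on the negative branch and exactly one of $U_2,U_3$ equals $\tau(U)$ (positive), the other equals $U$ (negative). Because $s=1/2$ there, the labelling of $J$ versus $1-J$ is immaterial, so this matches \eqref{eq:d3-maximizer-copula} in distribution. It also reproduces the conditional weights $w_{(-,+,-)}/(1-p)=w_{(-,-,+)}/(1-p)=1/2$ from \eqref{eq:d3-even-weights}.

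\textbf{Concluding.} It follows that $|X_1|,|X_2|,|X_3|$ are all equal to $G^{-1}(W)$, hence comonotonic, and $X_1X_2X_3\ge0$. Lemma~\ref{lem:product-bounds} and Theorem~\ref{thm:attainability} then give attainment.

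\textbf{Main obstacle.} The delicate step is checking that each $U_i$ is uniform, i.e.\ that mixing $U$ and $\tau(U)$ with weights $s(U)$ reproduces $F$. I would handle this by invoking Proposition~\ref{prop:d3-copula} directly rather than recomputing. If a direct check is wanted, verify $\mathbb P(U_2\le t)=t$ separately on $t\le u_0$ and $t>u_0$. Use the change of variables $v=\tau(u)$, whose Jacobian $c/(1-u)^2$ equals $2(1-s(u))$ on $(u_0,u_a]$. This identity is exactly what makes the mass moved across branches balance.
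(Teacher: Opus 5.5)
Your proposal is correct and follows essentially the same route as the paper. Both apply Proposition~\ref{cor:iid} with $d=3$, compute $p=1/(1+e^{2\lambda y})$ on $(0,a)$ and $p=1$ beyond, and use the infimum $1/(1+e^{2\lambda a})$ to get the threshold $e^{2\lambda a}\le 2$; both then read \eqref{eq:shiftedexp-U123} as the identical-marginal specialization of Proposition~\ref{prop:d3-copula} with $s=1-\tfrac12 c/(1-u)^2$ on the positive branch. Your extra checks fill in details the paper leaves implicit: you note that the $J\leftrightarrow 1-J$ labelling on $\{U\le u_0\}$ differs from \eqref{eq:d3-maximizer-copula} but is immaterial since $s=1/2$ there, and that $|\tau'(u)|=c/(1-u)^2=2(1-s(u))$ gives uniformity of $U_2$ and $U_3$ directly.
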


\begin{proof}
The construction \eqref{eq:shiftedexp-U123} is the specialization of the general trivariate maximizer \eqref{eq:d3-maximizer-copula} to identical shifted exponential marginals, with the sign-flip map $\tau$ from \eqref{eq:shiftedexp-f} and the mixing function $s$ from \eqref{eq:shiftedexp-s}. By Proposition~\ref{cor:iid}, the upper bound is attainable iff $p(u)\ge 1/3$ for a.e.\ $u$. For $u>u_a$, we have $p(u)=1$ since $F^{-1}(u)>a$ has no negative counterpart of the same magnitude. For $u\in(0, u_a)$, the sign bias $p(u)=f(y)/(f(y)+f(-y))$ is minimized at $u=u_a$, where $p(u_a)=1/(1+e^{2\lambda a})$. Hence $p(u)\ge 1/3$ for all relevant levels iff $e^{2\lambda a}\le 2$, equivalently $a\le \frac{\log 2}{2\lambda}$. On the positive branch $u\in(u_0, u_a]$, the branch density ratio
$$r(u):=\frac{f\left(-F^{-1}(u)\right)}{f\left(F^{-1}(u)\right)}=\frac{c}{(1-u)^2}$$
satisfies $s(u)=1-r(u)/2$. Since $r(u)$ is increasing on $(u_0, u_a]$ and $r(u_a)=e^{2\lambda a}$, the same condition gives $s(u)\ge 0$ on the active branch. Thus, the construction produces the correct marginals and attains the bound exactly under the stated condition.
\end{proof}
For every $u\in(0, u_a)$, $F^{-1}(\tau(u))=-F^{-1}(u),$ so choosing $U$ versus $\tau(U)$ flips the sign of the quantile while preserving its absolute value. Moreover, $\tau$ is an involution on $(0, u_a)$, that is, $\tau(\tau(u))=u,$ for all $u\in(0, u_a),$ and $u_0$ is a fixed point such that $\tau(u_0)=u_0$.

The copula for the example in this section has three regions, as we can see in Figure~\ref{fig:shiftedexp-maximizer}. The break at $u_0=F(0)$ marks where $X_1$ changes sign: for $u\le u_0$, $X_1\le 0$ and the support splits equally ($s=1/2$) between the $(-, -, +)$ and $(-, +, -)$ legs, to guarantee that one of $X_2$ and $X_3$ is positive and the other negative, so that the product $X_1X_2X_3$ is positive. For $u\in(u_0, u_a]$, $X_1>0$ and the support alternates between the $(+, +, +)$ and $(+, -, -)$ legs with probability $s(u)$ and $1-s(u)$. The break at $u_a=F(a)$ marks where the negative part of the support ends. That is, for $u>u_a$, the magnitude $|X_1|=F^{-1}(u)>a$ exceeds the lower bound of the support, so no negative realization of the same magnitude exists. In that case, we have $p(u)=1$, and the support lies entirely on the $(+, +, +)$ leg. This truncation shortens the legs relative to a distribution supported on all of $\R$; see Appendix \ref{subsec:normal-mu075} for an example with non-truncated support.

\begin{figure}
\centering
\begin{subfigure}[b]{0.47\textwidth}
  \centering
  \includegraphics[width=\textwidth]{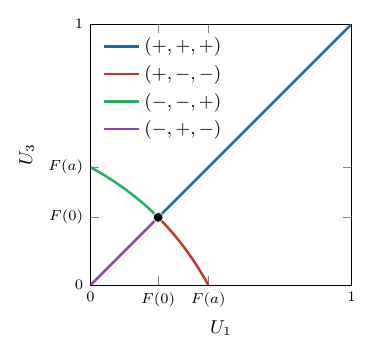}
  \caption{Marginal copula $(U_1, U_3)$.}
\end{subfigure}
\hfill
\begin{subfigure}[b]{0.47\textwidth}
  \centering
  \includegraphics[width=\textwidth]{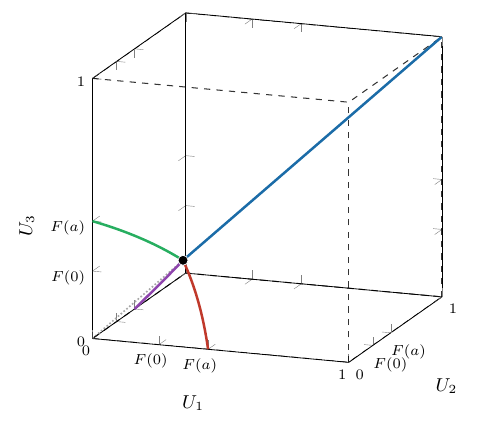}
  \caption{Support in $[0, 1]^3$.}
\end{subfigure}
\caption{Support of the maximizing copula for shifted exponential marginals ($a=0.30$, $\lambda=1$). Each colour corresponds to one even-parity sign pattern; the black dot marks the junction $(u_0, u_0, u_0)$. Left: the marginal copula $(U_1, U_3)$, showing the three regions of support; the two other marginal copulas $(U_1, U_2)$ and $(U_2, U_3)$ have the same structure by symmetry. Right: the full support in $[0, 1]^3$.}
\label{fig:shiftedexp-maximizer}
\end{figure}

\subsection{Heterogeneous shifted exponentials}\label{subsec:shifted-exp2}

We now consider an example where each marginal distribution is different, so that the sign-bias functions $p_i(u)$ are distinct, and the sign weights $w_{\bm s}(u)$ are nontrivial functions of $u$. Let $d=3$. For $i=1, 2, 3$, assume
$X_i+a_i \sim \mathrm{Exp}(\lambda_i),$ where $\lambda_i>0,$ and $ a_i>0,$
then
$$F_i(x)=1-e^{-\lambda_i(x+a_i)}, \text{ and } f_i(x)=\lambda_i e^{-\lambda_i(x+a_i)}, \text{ where } \quad x\ge -a_i.$$

A direct calculation gives, for $y\ge 0$,
\begin{equation}\label{eq:Gi-shifted-exp}
G_i(y)=\p(|X_i|\le y)=
\begin{cases}
F_i(y)-F_i(-y)=2e^{-\lambda_i a_i}\sinh(\lambda_i y), & 0\le y\le a_i, \\[0.4em]
F_i(y)=1-e^{-\lambda_i(y+a_i)}, & y>a_i,
\end{cases}
\end{equation}
and hence, with
$u_{a, i}:=G_i(a_i)=1-e^{-2\lambda_i a_i}.$
The quantile function admits the closed-form expression
\begin{equation}\label{eq:yi-quantile-shifted-exp}
G_i^{-1}(u)=
\begin{cases}
\displaystyle \frac{1}{\lambda_i} \sinh^{-1} \left(\frac{u}{2}e^{\lambda_i a_i}\right),
& 0<u<u_{a, i}, \\[1.0em]
\displaystyle -\frac{1}{\lambda_i}\log(1-u)-a_i,
& u_{a, i}\le u<1.
\end{cases}
\end{equation}

Since $f_i(-y)=\lambda_i e^{-\lambda_i(a_i-y)}$ for $0<y\le a_i$ and $f_i(-y)=0$ for $y>a_i$, the sign bias \eqref{eq:sign-bias} is
\begin{equation}\label{eq:pi-u-shifted-exp}
p_i(u)=\frac{f_i(G_i^{-1}(u))}{f_i(G_i^{-1}(u))+f_i(-G_i^{-1}(u))}=
\begin{cases}
\displaystyle \frac{1}{1+e^{2\lambda_i G_i^{-1}(u)}}, & 0<u<u_{a, i}, \\[0.8em]
1, & u_{a, i}\le u<1.
\end{cases}
\end{equation}Then, the even-parity sign weights in \eqref{eq:d3-even-weights} can be calculated with $p_i(u)$ in \eqref{eq:pi-u-shifted-exp}. Let $U\eqiid V\sim \U[0, 1]$. Set $Y_i:=G_i^{-1}(U)$ and sample an even-parity sign vector $\bm S=(S_1, S_2, S_3)\in E_3$ with $\p(\bm S=\bm s\mid U)=w_{\bm s}(U)$ for each $\bm s\in E_3$, using $V$ as the randomization source. For $i=1, 2, 3$, define
\begin{equation}\label{eq:Xi-hetero-exp}
X_i:=S_i Y_i.
\end{equation}

\begin{proposition}\label{prop:shifted-exp-hetero}
If the four weights \eqref{eq:d3-even-weights} are nonnegative for a.e.\ $u\in(0, 1)$, then the coupling \eqref{eq:Xi-hetero-exp} attains the universal upper bound
$M_3^{\times}=\E\left[\prod_{i=1}^3 G_i^{-1}(U)\right].$
\end{proposition}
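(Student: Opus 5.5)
This is a direct instance of Theorem~\ref{thm:attainability} with $d=3$, specialized to the shifted exponential marginals. We check three things: Assumption~\ref{ass:regularity} holds, the explicit weights form a valid Borel sign selector, and the coupling \eqref{eq:Xi-hetero-exp} has the right marginals and a nonnegative product.

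\textbf{Assumption~\ref{ass:regularity}.} Each $F_i$ is absolutely continuous on the connected support $[-a_i,\infty)$. Its density $f_i$ is positive on $(-a_i,\infty)$. All moments are finite, in particular $\E[|X_i|^3]<\infty$. Hence Lemma~\ref{lem:sign-bias} applies, and $p_i$ is given by \eqref{eq:pi-u-shifted-exp}. This formula is Borel in $u$, being a composition of the continuous map $G_i^{-1}$ from \eqref{eq:yi-quantile-shifted-exp} with elementary functions.

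\textbf{The weights.} The weights \eqref{eq:d3-even-weights} are affine in $\bm p_u$, so they are Borel. They sum to $1$: adding the four numerators gives $2$, and dividing by $2$ gives $1$. For each $i$, the weights of patterns with $s_i=+1$ sum to $p_i(u)$. For example, $w_{(+,+,+)}+w_{(+,-,-)}=p_1$, and the cases $i=2,3$ are analogous. Under the hypothesis that they are nonnegative a.e., they are exactly the measurable selection required in Lemma~\ref{lem:measurable-selection}(1). So $\bm p_u\in P_3^+$ a.e., with explicit $w^\ast_{\bm s}$ and no general selection argument needed.

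\textbf{Marginals.} This is the main obstacle. We need $X_i=S_iG_i^{-1}(U)\sim F_i$, which reduces to showing that $\p(S_i=+1, U\le u)=H_i(u)$ for all $u$. By construction, $\p(S_i=+1\mid U)=\sum_{s_i=+1}w_{\bm s}(U)=p_i(U)$. Hence $\p(S_i=+1, U\le u)=\int_0^u p_i = H_i(u)$ by Lemma~\ref{lem:sign-bias}. Since $G_i(Y_i)=U$ a.s. by continuity and strict monotonicity of $G_i$, the joint law of $(\mathrm{sign}(X_i),G_i(|X_i|))$ coincides with that of the original variable. This pair determines the law of $X_i$, because $X_i$ is a measurable function of it and $\p(X_i=0)=0$. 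Therefore $X_i\sim F_i$.

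\textbf{The product.} Every $\bm s\in E_3$ has $s_1s_2s_3=+1$, so $X_1X_2X_3=\prod_i G_i^{-1}(U)\ge0$ a.s., with comonotonic magnitudes. Taking expectations gives $\E\left[\prod_{i=1}^3 X_i\right]=\E\left[\prod_{i=1}^3 G_i^{-1}(U)\right]$. By \eqref{eq:upper-bound} this equals $M_3^\times$, and the bound is finite by H\"older.
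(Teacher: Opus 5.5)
Your proposal is correct and takes the paper's route: the paper simply cites Theorem~\ref{thm:attainability} with $d=3$ and the explicit weights \eqref{eq:d3-even-weights}, whereas you verify its hypotheses (Assumption~\ref{ass:regularity}, and that the affine weights form a Borel sign distribution with the right marginals, so Lemma~\ref{lem:measurable-selection} is not needed) and then re-derive the sufficiency half of that theorem's proof inline. Your marginal check, which matches the joint law of $(\mathrm{sign}(X_i), G_i(|X_i|))$ through $H_i$, is an equivalent variant of the paper's change-of-variables computation $\p(X_i\ge0, |X_i|\in B)=\int_B f_i(y)\,\d y$.
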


\begin{proof}
The construction is the specialization of Theorem~\ref{thm:attainability} to $d=3$ with the sign weights \eqref{eq:d3-even-weights}. 
\end{proof}

For illustration, let $(\lambda_1, \lambda_2, \lambda_3)=(0.8, 1.0, 1.9)$ and $(a_1, a_2, a_3)=(0.15, 0.38, 0.20).$
Then $(\lambda_1 a_1, \lambda_2 a_2, \lambda_3 a_3)=(0.12, 0.38, 0.38)$ and $(u_{a, 1}, u_{a, 2}, u_{a, 3})\approx(0.213, 0.532, 0.532).$
A numerical evaluation of \eqref{eq:d3-even-weights} confirms that all four weights are nonnegative for every $u\in(0, 1)$, so the assumptions of Proposition~\ref{prop:shifted-exp-hetero} are satisfied. 

We present the support of the maximizing copula in Figure~\ref{fig:shiftedexp-hetero-maximizer}. The support has three regions, as in the homogeneous case, but the boundaries are no longer aligned across the three coordinates. The first break at $\min(F_1(0), F_2(0), F_3(0)) = u_{0, 1}$ corresponds to the point where $X_1$ changes sign, and the second break at $\max(F_1(a_1), F_2(a_2), F_3(a_3)) = u_{a, 3}$ marks the last magnitude level at which a negative realisation is available, here for the $X_3$ coordinate; beyond this level all three coordinates are forced to be positive. The break for $X_2$ lies somewhere in between, where the blue and red curves change behaviour. Since each support is different, the marginal copulas $(U_1, U_2)$, $(U_1, U_3)$ and $(U_2, U_3)$ have different structures, but they all share the same three-region structure.

The marginal copula $(U_1, U_3)$ in Figure~\ref{fig:shiftedexp-hetero-maximizer}(a) illustrates the interplay between the comonotonicity of magnitudes and the asymmetry of the marginals. The four legs of the support meet at the junction $(F_1(0), F_3(0))$ and separate into two pairs, one on each side. To the left of the junction, $X_1<0$ on both the green $(-, -, +)$ and purple $(-, +, -)$ legs, so both share $U_1=F_1(-y_1(u))$, which decreases from $F_1(0)$ to $0$ as the magnitude level $u$ increases from $0$ to $G_1(a_1)$. At each level the magnitude $|X_3|=y_3(u)$ is the same on both legs, but $X_3$ takes opposite signs, giving $U_3=F_3(y_3(u))$ on the green leg and $U_3=F_3(-y_3(u))$ on the purple leg. The two curves therefore diverge from $F_3(0)$ in opposite directions along the $U_3$-axis, reaching $F_3(G_3^{-1}(G_1(a_1)))$ and $F_3(-G_3^{-1}(G_1(a_1)))$ at $U_1=0$. Because $F_3$ is not symmetric around zero, i.e., $F_3(y)-F_3(0)\neq F_3(0)-F_3(-y)$ for $y>0$, the two branches separate by unequal amounts. To the right of the junction, the blue $(+, +, +)$ and red $(+, -, -)$ legs form an analogous pair with $X_1>0$ and $X_3$ of opposite signs, diverging from $F_3(0)$ along the $U_3$-axis as $U_1$ increases from $F_1(0)$ toward $1$. For $U_1>F_3(a_3)$, the magnitude $y_3(u)$ exceeds $a_3$ and $F_3(-y_3(u))=0$, so no negative realisation of $X_3$ with the same absolute value exists; the red leg terminates and the support reduces to the single $(+, +, +)$ leg, which lies on the diagonal $U_1=U_2=U_3$ and corresponds to the comonotonic copula.

\begin{figure}
\centering
\begin{subfigure}[b]{0.47\textwidth}
  \centering
  \includegraphics[width=\textwidth]{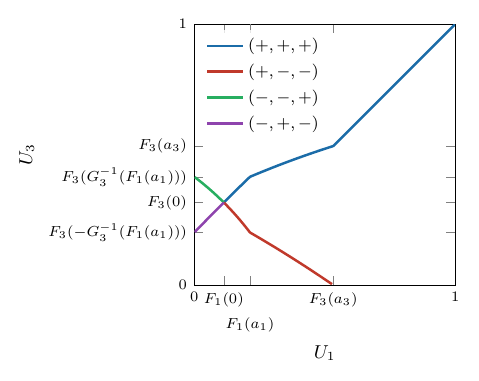}
  \caption{Marginal copula $(U_1, U_3)$.}
\end{subfigure}
\hfill
\begin{subfigure}[b]{0.47\textwidth}
  \centering
  \includegraphics[width=\textwidth]{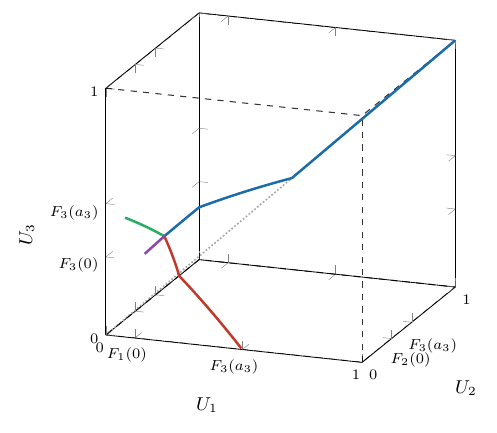}
  \caption{Support in $[0, 1]^3$.}
\end{subfigure}
\caption{Support of the maximizing copula for heterogeneous shifted exponential marginals with $(\lambda_1, \lambda_2, \lambda_3)=(0.8, 1.0, 1.9)$ and $(a_1, a_2, a_3)=(0.15, 0.38, 0.20)$. The four coloured curves correspond to the even-parity sign patterns $(+, +, +)$, $(+, -, -)$, $(-, -, +)$ and $(-, +, -)$.}
\label{fig:shiftedexp-hetero-maximizer}
\end{figure}

\subsection{Non-centred normal marginals}\label{subsec:normal-mu075}

Let $\Phi$ and $\phi$ denote the standard normal cdf and density. As a third example, we consider identical normal marginals $F_i=F$ with mean $\mu\in\R$ and variance $\sigma^2>0$, so that $F(x)=\Phi\left((x-\mu)/\sigma\right)$ and $F^{-1}(u)=\mu+\sigma\Phi^{-1}(u)$. Unlike the shifted exponential, the normal distribution is supported on all of~$\R$, so the sign bias $p(u)\in(0, 1)$ for every $u\in(0, 1)$. In particular, every magnitude level $u$ admits both a positive and a negative realization of the same absolute value, so the coupling must choose between two or more even-parity legs at every level. A nonzero mean introduces asymmetry in the sign bias: positive values of $\mu$ make positive signs more likely in the tail, and negative values have the opposite effect. We show that when $\mu\ge 0$ the universal upper bound is attained, and when $\mu\le 0$ the universal lower bound is attained; the copulas of the two extremizers are related by a reversal of the mixing function.

Define the sign threshold $u_0:=F(0)=\Phi\left(-\mu/\sigma\right)$, so that $F^{-1}(u)\le 0$ iff $u\le u_0$. When $\mu>0$ we have $u_0<1/2$; when $\mu<0$ we have $u_0>1/2$; and $u_0=1/2$ when $\mu=0$. Define the sign-flip involution $\tau:(0, 1)\to(0, 1)$ by
$\tau(u):=F\left(-F^{-1}(u)\right)=\Phi\left(-\Phi^{-1}(u)-\frac{2\mu}{\sigma}\right).$
Then $\tau$ is a strictly decreasing involution with fixed point $u_0$, that is, $\tau(\tau(u))=u$ and $\tau(u_0)=u_0$, and it flips the quantile around zero: $F^{-1}(\tau(u))=-F^{-1}(u)$ for all $u\in(0, 1)$. The branch density ratio is
$$r_\mu(u):=\frac{f\left(-F^{-1}(u)\right)}{f\left(F^{-1}(u)\right)}=\exp\!\left(-\frac{2\mu}{\sigma}\,\Phi^{-1}(u)-\frac{2\mu^2}{\sigma^2}\right).$$
This ratio depends on $\mu$ and $\sigma$ only through $\mu/\sigma$, so the sign structure of the extremal coupling is invariant to scale and depends only on the signal-to-noise ratio $\mu/\sigma$. A direct computation gives $r_\mu(u)\le 1$ on $(u_0, 1)$ when $\mu\ge 0$, and on $(0, u_0]$ when $\mu\le 0$.

The sign bias simplifies to the logistic form
\begin{equation}\label{eq:normal-sign-bias}
p(u)=\frac{\exp\!\left(2\mu\, G^{-1}(u)/\sigma^2\right)}{1+\exp\!\left(2\mu\, G^{-1}(u)/\sigma^2\right)}.
\end{equation}
where $G^{-1}(u)$ is the absolute-value quantile.

\begin{proposition}\label{prop:normal-maximizer}
Let $X \sim\mathrm{N}(\mu, \sigma^2)$ with $\mu\in\R$ and $\sigma>0$, and let $\tau$ and $u_0$ be as above. Let $U\eqiid V\sim\U[0, 1]$, and set $I:=\id_{\{U>u_0\}}$, and $G$ be the cdf of $|X|$.
\begin{enumerate}[(1)]
\item If $\mu\ge 0$, the universal upper bound $M_3^{\times}=\E\left[G^{-1}(U)^3\right]$ is attained by $X_i=F^{-1}(U_i)$ with $$U_1=U, \quad U_2=J^+ U+(1-J^+) \tau(U), \quad U_3=I U_2+(1-I)\left(J^+ \tau(U)+(1-J^+) U\right),$$ where $J^+=\id_{\{V\le s^+(U)\}}$ with
\begin{equation}\label{eq:normal-s}
s^+(u)=
\begin{cases}
\displaystyle \frac12, & 0<u\le u_0, \\[0.8em]
\displaystyle 1-\frac12\, r_\mu(u), & u_0<u<1,
\end{cases}
\end{equation}

\item If $\mu\le 0$, the universal lower bound $m_3^{\times}=-\E\left[G^{-1}(U)^3\right]$ is attained by $X_i=F^{-1}(U_i)$ with \begin{equation}\label{eq:normal-U123-min}
U_1=U, \quad
U_2=J^- U+(1-J^-) \tau(U), \quad
U_3=(1-I) U_2+I\left(J^- \tau(U)+(1-J^-) U\right),
\end{equation} where $J^-=\id_{\{V\le s^-(U)\}}$ with
\begin{equation}\label{eq:normal-s-min}
s^-(u)=
\begin{cases}
\displaystyle 1-\frac12\, r_\mu(u), & 0<u\le u_0, \\[0.8em]
\displaystyle \frac12, & u_0<u<1.
\end{cases}
\end{equation}
\end{enumerate}
\end{proposition}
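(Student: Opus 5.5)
The plan is to reduce both parts to the identical-marginals specialization of Proposition~\ref{prop:d3-copula}, using the criterion of Proposition~\ref{cor:iid} for $d=3$. By \eqref{eq:normal-sign-bias}, $p(u)\ge 1/2$ when $\mu\ge 0$ and $p(u)\le 1/2$ when $\mu\le 0$. In the first case $p(u)\ge 1/3$, so $(p,p,p)\in P_3^+$. In the second case $p(u)\le 2/3$, so $(p,p,p)\in P_3^-$. Hence the universal upper (resp.\ lower) bound is attainable. The remaining task is to check that the displayed copulas are exactly the extremal constructions of Proposition~\ref{prop:d3-copula}, or equivalently of Theorems~\ref{thm:attainability} and~\ref{thm:lower-attainability} with the trivariate weights \eqref{eq:d3-even-weights}--\eqref{eq:d3-odd-weights}.

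The dictionary comes first. For identical marginals, $h^+(W)$ and $h^-(W)$ equal $U$ and $\tau(U)$, with the roles depending on the branch. If $U>u_0$, then $h^+(W)=U$ and $h^-(W)=\tau(U)$; if $U\le u_0$, the roles swap, since $F^{-1}(\tau(u))=-F^{-1}(u)$ and $\tau$ is a decreasing involution fixing $u_0$. I will then express the sign bias through the branch density ratio. At the level $w$ associated with $u>u_0$, one has $p=1/(1+r_\mu(u))$. At $u\le u_0$, one has $1-p = 1/(1+r_\mu(u))$, because the ratio is then taken with $F^{-1}(u)<0$.

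For part (1), substitute into \eqref{eq:d3-maximizer-mixing}. On $u>u_0$, $s=(3p-1)/(2p)=1-(1-p)/(2p)=1-r_\mu(u)/2$. On $u\le u_0$, $s=(1-p)/(2(1-p))=1/2$. This matches \eqref{eq:normal-s}, and the identity $r_\mu\le 1$ on $(u_0,1)$ for $\mu\ge 0$ gives $s^+\in[1/2,1]$. Next I will check the coordinates. On $I=1$, $U_2=U_3\in\{U,\tau(U)\}$, giving the patterns $(+,+,+)$ and $(+,-,-)$. On $I=0$, $U_2$ and $U_3$ occupy opposite choices among $U$ and $\tau(U)$. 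Since $U\le u_0$ is the negative branch, $J=1$ gives $U_2=U$, a negative value, and $U_3=\tau(U)$, a positive value. This is the pattern $(-,-,+)$ with probability $1/2$, with the complementary pattern $(-,+,-)$. This matches Table~\ref{tab:d3-max-patterns} up to relabelling $J\leftrightarrow 1-J$, which is harmless when $s=1/2$.

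Part (2) is handled in the same way with \eqref{eq:d3-minimizer-mixing}. On $u>u_0$, $s=(2p-p)/(2p)=1/2$. On $u\le u_0$, $s=p/(2(1-p))$, and with $p/(1-p)=r_\mu^{-1}$ that expression is not obviously $1-r_\mu/2$. This is the step I expect to be the main obstacle: reconciling \eqref{eq:normal-s-min} with the minimizer mixing function. The convention in \eqref{eq:normal-U123-min} selects $U$ (the negative value) with probability $s^-$ for coordinate $2$, and on $I=0$ sets $U_3=U_2$. The pattern $(-,-,-)$ therefore has weight $s^-$ and $(-,+,+)$ has weight $1-s^-$. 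I will match these with the barycentric weights. The weight of $(-,-,-)$ is $w=(2-3p)/2$, divided by the branch mass $1-p$, which gives $(2-3p)/(2(1-p))=1-p/(2(1-p))$. Using the negative-branch ratio $p/(1-p)=r_\mu(u)$ (the ratio is inverted because $F^{-1}(u)<0$ there), this equals $1-r_\mu/2$. I need to verify carefully the orientation of $r_\mu$ on each branch, so that $r_\mu\le1$ on $(0,u_0]$ for $\mu\le0$ gives $s^-\in[1/2,1]$.

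Finally, the marginals and optimality follow as in Theorem~\ref{thm:attainability}. Each $U_i$ is uniform because, conditionally on $W$, it selects the positive branch with probability $p$. The magnitudes are comonotonic through $W$, and the sign product is constant on each leg, so Lemma~\ref{lem:product-bounds} gives equality in the corresponding bound.
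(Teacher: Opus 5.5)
Your proposal is correct and follows essentially the paper's own argument. Feasibility comes from Proposition~\ref{cor:iid} with $d=3$ together with $p\ge 1/2$ (resp.\ $p\le 1/2$), and the displayed copulas are the identical-marginal specialization of Proposition~\ref{prop:d3-copula}, with $s^\pm$ matched to the barycentric weights using $r_\mu=(1-p)/p$ on $(u_0,1)$ and $r_\mu=p/(1-p)$ on $(0,u_0]$.

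Your passing claim $p/(1-p)=r_\mu^{-1}$ on the negative branch is a slip, which you correct yourself. Your observation that $J^-=1$ selects $(-,-,-)$, so that $s^-=1-p/(2(1-p))$ is the complement of the general mixing function in \eqref{eq:d3-minimizer-mixing}, is exactly the reconciliation the paper leaves implicit.
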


\begin{proof}
By Proposition~\ref{cor:iid} with $d=3$, the upper bound is attainable iff $p(u)\ge 1/3$ for a.e.\ $u$. When $\mu\ge 0$, the logistic form \eqref{eq:normal-sign-bias} gives $p(u)\ge 1/2\ge 1/3$ for all $u\in(0, 1)$, since $\mu G^{-1}(u)/\sigma^2\ge 0$. Theorem~\ref{thm:attainability} then guarantees that the specialization of~\eqref{eq:d3-maximizer-copula} to identical marginals produces the correct marginals and attains the bound; the mixing function $s^+$ in~\eqref{eq:normal-s} is obtained by matching the even-parity weights~\eqref{eq:d3-even-weights} as in Section~\ref{subsec:d3-copula}. On the positive branch, $r_\mu(u)=(1-p(w))/p(w)\le 1$ since $p(u)\ge 1/2$, so $s^+(u)=1-r_\mu(u)/2\in[1/2, 1)$. Similarly, for the lower bound, Proposition~\ref{cor:iid} requires $p(u)\le 2/3$ for a.e.\ $u$. When $\mu\le 0$, \eqref{eq:normal-sign-bias} gives $p(u)\le 1/2\le 2/3$, so Theorem~\ref{thm:lower-attainability} applies. The minimiser copula~\eqref{eq:normal-U123-min} is obtained by swapping $I$ and $1-I$ in $U_3$ as in Section~\ref{subsec:d3-copula}. On the negative branch, $r_\mu(u)=p(w)/(1-p(w))\le 1$ since $p(u)\le 1/2$, so $s^-(u)=1-r_\mu(u)/2\in[1/2, 1)$.
\end{proof}

The two constructions exhibit a mirror symmetry: the maximiser mixing function $s^+$ places the nontrivial branch on $(u_0, 1)$, where $F^{-1}(u)>0$ and the coupling must decide how to split the positive realisation between $(+, +, +)$ and $(+, -, -)$; the minimiser mixing function $s^-$ places the nontrivial branch on $(0, u_0]$, where $F^{-1}(u)\le 0$ and the coupling must split the negative realisation between $(-, +, +)$ and $(-, -, -)$. This symmetry reflects the identity $\inf_{X_i\sim \mathrm{N}(\mu, \sigma^2)}\E[X_1X_2X_3]=-\sup_{Y_i\sim \mathrm{N}(-\mu, \sigma^2)}\E[Y_1Y_2Y_3]$, which holds because replacing each $X_i$ by $-X_i$ negates the product and reverses the sign of the mean.

When $\mu=0$, the normal distribution is symmetric, $u_0=1/2$, $\tau(u)=1-u$, and $r_\mu(u)=1$ for all $u$, so both mixing functions reduce to $s^+(u)=s^-(u)=1/2$ everywhere. Both the upper and lower bounds are attainable, and the extremal couplings both place $U_2, U_3 \in \{U, 1-U\}$ with a fair coin, recovering the symmetric case solved by \citet{bernard2023coskewness}.

Figure~\ref{fig:normal-maximizer} shows the support of the maximising copula for $\mu=\Phi^{-1}(0.75)\approx 0.674$ and $\sigma=1$ (so $u_0=0.25$), and Figure~\ref{fig:normal-minimizer} shows the minimising copula for $\mu=\Phi^{-1}(0.25)\approx -0.674$ and $\sigma=1$ (so $u_0=0.75$). The two figures are reflections of each other along the main diagonal of $[0, 1]^3$, with the junction point moving from $(0.25, 0.25, 0.25)$ to $(0.75, 0.75, 0.75)$ and the sign patterns switching from even to odd parity.

\begin{figure}
\centering
\begin{subfigure}[b]{0.47\textwidth}
  \centering
  \includegraphics[width=\textwidth]{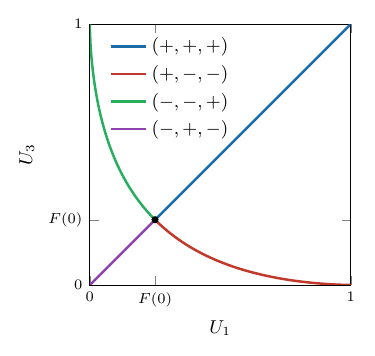}
  \caption{Marginal copula $(U_1, U_3)$.}
\end{subfigure}
\hfill
\begin{subfigure}[b]{0.47\textwidth}
  \centering
  \includegraphics[width=\textwidth]{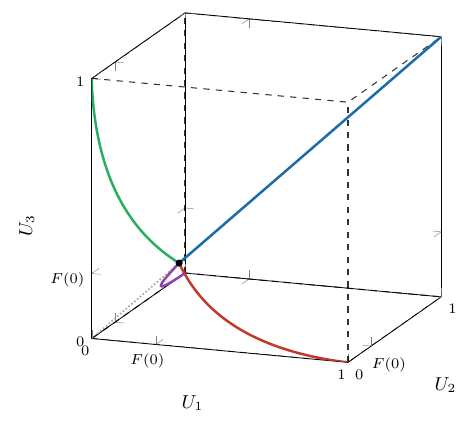}
  \caption{Support in $[0, 1]^3$.}
\end{subfigure}
\caption{Support of the maximizing copula for $\mathrm{N}(\mu, 1)$ marginals ($\mu=\Phi^{-1}(0.75)\approx 0.674$). Each colour corresponds to one even-parity sign pattern; the black dot marks the junction $(u_0, u_0, u_0)=(0.25, 0.25, 0.25)$.}
\label{fig:normal-maximizer}
\end{figure}

\begin{figure}
\centering
\begin{subfigure}[b]{0.47\textwidth}
  \centering
  \includegraphics[width=\textwidth]{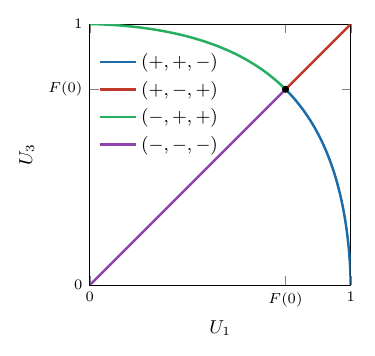}
  \caption{Marginal copula $(U_1, U_3)$.}
\end{subfigure}
\hfill
\begin{subfigure}[b]{0.47\textwidth}
  \centering
  \includegraphics[width=\textwidth]{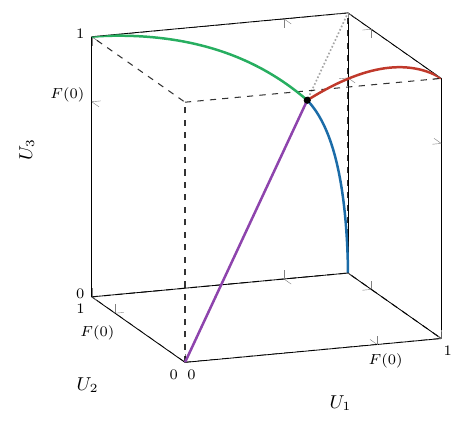}
  \caption{Support in $[0, 1]^3$.}
\end{subfigure}
\caption{Support of the minimizing copula for $\mathrm{N}(\mu, 1)$ marginals ($\mu=\Phi^{-1}(0.25)\approx -0.674$). Each colour corresponds to one odd-parity sign pattern; the black dot marks the junction $(u_0, u_0, u_0)=(0.75, 0.75, 0.75)$.}
\label{fig:normal-minimizer}
\end{figure}

For the shifted exponential marginals of Appendices~\ref{subsec:shifted-exp} and~\ref{subsec:shifted-exp2}, Corollary~\ref{cor:skew-obstruction} rules out the universal lower bound directly. For normal marginals with $\mu\ne 0$, unbounded support does not suffice to attain both extremal bounds: when $\mu>0$, $p(u)\to 1$ as $u\to 1$, which violates the odd-parity threshold $p(u)\le 2/3$; when $\mu<0$, $p(u)\to 0$, which violates the even-parity threshold $p(u)\ge 1/3$. Both bounds are attainable simultaneously iff $p(u)\in[1/3, 2/3]$ for almost every~$u$, and for the normal distribution, this holds only when $\mu=0$.

\subsection{Non-centred Student-t marginals}\label{subsec:student-t}

Let $T_\nu$ and $t_\nu$ denote the cdf and density of the standard Student-$t$ distribution with $\nu>3$ degrees of freedom. In this example, we consider identically distributed Student-$t$ marginals $F_i=F$ with location parameter $\mu\in\R$ and scale parameter $\sigma>0$, so that $X=\mu+\sigma Z$, where $Z\sim t_\nu$. Recall that for the normal distribution, one tail eventually dominates the other, preventing simultaneous membership in both polytopes. However, for the Student-\$t\$ distribution, the positive and negative tails have the same asymptotic decay rate, so membership in both polytopes is possible. We show that both universal bounds may therefore be attained when $\mu\ne0$.

Write $a:=\mu/\sigma$. The common marginal cdf and sign threshold are
$$F(x)=T_\nu\left(\frac{x-\mu}{\sigma}\right), \qquad u_0:=F(0)=T_\nu(-a).$$
The sign-flip involution is
$$\tau(u):=F\left(-F^{-1}(u)\right) =T_\nu\left(-T_\nu^{-1}(u)-2a\right),$$
and, on writing $z=T_\nu^{-1}(u)$, the corresponding branch density ratio is
\begin{equation}\label{eq:t-density-ratio}
r_{a, \nu}(u):=\frac{f(-F^{-1}(u))}{f(F^{-1}(u))}
=\left(\frac{\nu+z^2}{\nu+(z+2a)^2}\right)^{(\nu+1)/2}.
\end{equation}

Let $G$ be the cdf of $|X|$, and write $z:=G^{-1}(u)/\sigma$. The sign bias is
\begin{equation}\label{eq:t-sign-bias}
p(u)=\frac{f(G^{-1}(u))}{f(G^{-1}(u))+f(-G^{-1}(u))}
=\left[1+
\left(\frac{\nu+(z-a)^2}{\nu+(z+a)^2}\right)^{(\nu+1)/2}
\right]^{-1}.
\end{equation}
Thus $p(u)\ge1/2$ for every $u\in(0, 1)$ when $\mu\ge0$, while $p(u)\le1/2$ for every $u\in(0, 1)$ when $\mu\le0$. Define
\begin{equation}\label{eq:t-shift-threshold}
a_\nu^\star:=\sqrt{\nu}\,
\sinh\left(\frac{\log 2}{\nu+1}\right).
\end{equation}
The following construction gives the maximizing and minimizing couplings.

\begin{proposition}\label{prop:t-extremizers}
Let $X\sim F$, and let $\tau$, $u_0$, and $r_{a, \nu}$ be as above. Let $U\eqiid V\sim\U[0, 1]$, set $I:=\id_{\{U>u_0\}}$, and let $G$ be the cdf of $|X|$.
\begin{enumerate}[(1)]
\item The universal upper bound is attainable iff $a\ge-a_\nu^\star$. Under this condition,
$$M_3^\times=\E[G^{-1}(U)^3]$$
is attained by $X_i=F^{-1}(U_i)$, where
$$U_1=U, \qquad U_2=J^+U+(1-J^+)\tau(U), \qquad U_3=IU_2+(1-I)\bigl(J^+\tau(U)+(1-J^+)U\bigr),$$
$J^+=\id_{\{V\le s^+(U)\}}$, and
$$
s^+(u)=
\begin{cases}
\dfrac12, & 0<u\le u_0, \\[0.6em]
1-\dfrac12r_{a, \nu}(u), & u_0<u<1.
\end{cases}
$$

\item The universal lower bound is attainable iff $a\le a_\nu^\star$. Under this condition,
$$m_3^\times=-\E[G^{-1}(U)^3]$$
is attained by $X_i=F^{-1}(U_i)$, where
$$U_1=U, \qquad U_2=J^-U+(1-J^-)\tau(U), \qquad U_3=(1-I)U_2+I\bigl(J^-\tau(U)+(1-J^-)U\bigr),$$
$J^-=\id_{\{V\le s^-(U)\}}$, and
$$
s^-(u)=
\begin{cases}
1-\dfrac12r_{a, \nu}(u), & 0<u\le u_0, \\[0.6em]
\dfrac12, & u_0<u<1.
\end{cases}
$$
\end{enumerate}
Consequently, both universal bounds are attainable iff $|\mu|/\sigma\le a_\nu^\star$.
\end{proposition}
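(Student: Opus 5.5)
The plan is to reduce both parts to the scalar thresholds of Proposition~\ref{cor:iid} with $d=3$. That result says the upper bound is sharp iff $p(u)\ge 1/3$ for a.e.\ $u$, and the lower bound is sharp iff $p(u)\le 2/3$ for a.e.\ $u$. Assumption~\ref{ass:regularity} holds because the Student-$t$ density is positive on $\R$ and $\E[|X|^3]<\infty$ for $\nu>3$.

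By \eqref{eq:t-sign-bias}, $p(u)=1/(1+R(z))$ with
$$R(z):=\left(\frac{\nu+(z-a)^2}{\nu+(z+a)^2}\right)^{(\nu+1)/2},\qquad z=G^{-1}(u)/\sigma .$$
As $u$ ranges over $(0,1)$, $z$ ranges continuously over all of $(0,\infty)$, since $|X|$ has support $[0,\infty)$. Because $p$ is continuous in $u$, the a.e.\ condition is equivalent to the pointwise condition for every $z>0$. We have $p\ge 1/3$ iff $R\le 2$, and $p\le 2/3$ iff $R\ge 1/2$.

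For the upper bound, the case $a\ge 0$ is immediate, since then $R\le 1$. For $a=-b$ with $b>0$, I would maximize the base
$$\rho(z)=\frac{\nu+(z+b)^2}{\nu+(z-b)^2}$$
over $z\ge 0$. Setting the derivative to zero gives the critical point $z^\ast=c:=\sqrt{\nu+b^2}$. There the numerator equals $2c^2+2bc$ and the denominator equals $2c^2-2bc$, so $\rho(z^\ast)=(c+b)/(c-b)$. Since $\rho(0)=1$ and $\rho\to 1$ as $z\to\infty$, this is the global maximum on $[0,\infty)$. The condition $R\le 2$ for all $z$ is therefore
$$\frac{c+b}{c-b}\le 2^{2/(\nu+1)}=e^{2k},\qquad k:=\frac{\log 2}{\nu+1},$$
which is equivalent to $b/c\le\tanh k$. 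Squaring and substituting $c^2=\nu+b^2$ gives $b^2(1-\tanh^2 k)\le \nu\tanh^2 k$, that is, $b\le\sqrt\nu\,\sinh k=a_\nu^\star$. This proves the "iff $a\ge -a_\nu^\star$" claim. The calculus here, including checking that the critical point is a maximum and doing the hyperbolic algebra, is the main (though routine) obstacle.

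For the lower bound, I would use the reflection $X\mapsto -X$. It maps location $\mu$ to $-\mu$ and negates the product, so $p\mapsto 1-p$. The lower-bound condition for $a$ is then exactly the upper-bound condition for $-a$, namely $a\le a_\nu^\star$. Combining the two parts gives the final claim $|\mu|/\sigma\le a_\nu^\star$.

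For the explicit couplings, I would specialize Proposition~\ref{prop:d3-copula} to identical marginals exactly as in the discussion after Table~\ref{tab:d3-max-patterns} and in Proposition~\ref{prop:normal-maximizer}. With $h^+(W)=U$ and $h^-(W)=\tau(U)$ on the positive branch (roles reversed on the negative branch), the even-parity weights \eqref{eq:d3-even-weights} give $s=1/2$ on $(0,u_0]$. On $(u_0,1)$ they give $s=(3p-1)/(2p)=1-r_{a,\nu}(u)/2$, using $r_{a,\nu}=(1-p)/p$ evaluated at the matching magnitude level via the involution $\tau$. Then $s^+\in[0,1]$ iff $r_{a,\nu}\le 2$, which is the same condition already verified. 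The minimizer follows analogously from \eqref{eq:d3-odd-weights}, or by reflection, which yields $s^-$.
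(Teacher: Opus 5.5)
Your proposal is correct and follows essentially the paper's argument. You reduce to the scalar thresholds $p\ge 1/3$ and $p\le 2/3$ of Proposition~\ref{cor:iid}, maximize the density-ratio base at $z^\ast=\sqrt{\nu+a^2}$, and use the reflection $X\mapsto -X$ for the other sign of $a$. The differences are cosmetic: you treat the upper bound for $a<0$ directly and reflect to get the lower bound, while the paper does the mirror image, and you convert the threshold via $\tanh$ rather than $\operatorname{arsinh}$.
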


\begin{proof}
Suppose first that $a\ge0$. By \eqref{eq:t-sign-bias}, $p(u)\ge1/2$, so the upper-bound condition $p(u)\ge1/3$ holds automatically. For the lower bound, the sign odds are
$$\frac{p(u)}{1-p(u)} =\left(\frac{\nu+(z+a)^2}{\nu+(z-a)^2}\right)^{(\nu+1)/2}.$$
The ratio inside the power has the derivative
$$\frac{4a(\nu+a^2-z^2)}{[\nu+(z-a)^2]^2},$$
so it is maximized over $z\ge0$ at $z^\star=\sqrt{\nu+a^2}$. At this point,
$$\frac{\nu+(z^\star+a)^2}{\nu+(z^\star-a)^2} =\frac{\sqrt{\nu+a^2}+a}{\sqrt{\nu+a^2}-a} =\exp\left(2\operatorname{arsinh}\frac{a}{\sqrt\nu}\right).$$
It follows that
$$\sup_{u\in(0, 1)}\frac{p(u)}{1-p(u)} =\exp\left((\nu+1)\operatorname{arsinh}\frac{a}{\sqrt\nu}\right).$$
By Proposition~\ref{cor:iid}, the lower bound is attainable iff $p(u)\le2/3$ for every $u\in(0, 1)$, equivalently iff this supremum is at most $2$. This gives $a\le a_\nu^\star$. For $a\le0$, reflection gives $p_a(u)=1-p_{-a}(u)$. Hence, the lower bound is attained automatically, while the upper bound is attainable iff $a\ge-a_\nu^\star$. The copula formulas follow directly from \eqref{eq:d3-maximizer-copula} and \eqref{eq:d3-minimizer-copula}. Under the respective threshold conditions, their mixing functions take values in $[0, 1]$.
\end{proof}

Unlike the non-centred normal case, both the minimizer and maximizer can be achieved from the same set of marginal distributions. Indeed, the polynomial tails of the Student-$t$ distribution imply that $p(u)\to1/2$ as $u\to1$. If the standardized shift \$|\textbackslash{}mu|/\textbackslash{}sigma\$ is sufficiently small, the sign bias remains in \$[1/3,2/3]\$ at every magnitude level.

For illustration, fix $\nu=5$, $\sigma=1$, and $\mu=1/4$. Then $a_5^\star\approx0.259>1/4,$ so both universal bounds are attainable for the same asymmetric marginal distribution. The maximum sign bias occurs at $z^\star=9/4,$ where
$$\sup_{u\in(0, 1)}\frac{p(u)}{1-p(u)} =\left(\frac54\right)^3=\frac{125}{64}, \qquad \sup_{u\in(0, 1)}p(u)=\frac{125}{189}<\frac23.$$
Hence $1/2\le p(u)\le125/189$ for every $u\in(0, 1)$. Moreover, $u_0=T_5(-1/4)\approx0.406$ and $\E[|X|^3]\approx4.92$. It follows that $M_3^\times\approx4.92$ and $m_3^\times\approx-4.92$.

Figure~\ref{fig:student-t-maximizer} shows the support of the maximizing copula, and Figure~\ref{fig:student-t-minimizer} shows the support of the minimizing copula. Both figures use the same non-centred Student-$t$ marginal distribution. The maximizing support has four even-parity legs, while the minimizing support has four odd-parity legs. All four legs meet at $(u_0, u_0, u_0)$ in each figure.

\begin{figure}[htbp]
\centering
\begin{subfigure}[b]{0.47\textwidth}
  \centering
  \includegraphics[width=\textwidth]{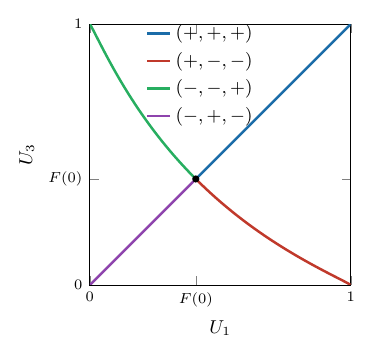}
  \caption{Marginal copula $(U_1, U_3)$.}
\end{subfigure}
\hfill
\begin{subfigure}[b]{0.47\textwidth}
  \centering
  \includegraphics[width=\textwidth]{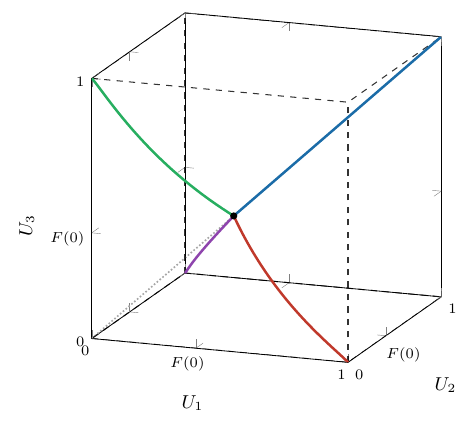}
  \caption{Support in $[0, 1]^3$.}
\end{subfigure}
\caption{Support of the maximizing copula for shifted Student-$t$ marginals with $\nu=5$, $\mu=1/4$, and $\sigma=1$. Each colour corresponds to one even-parity sign pattern; the black dot marks the junction $(u_0, u_0, u_0)$, where $u_0=T_5(-1/4)\approx0.406$.}
\label{fig:student-t-maximizer}
\end{figure}

\begin{figure}[htbp]
\centering
\begin{subfigure}[b]{0.47\textwidth}
  \centering
  \includegraphics[width=\textwidth]{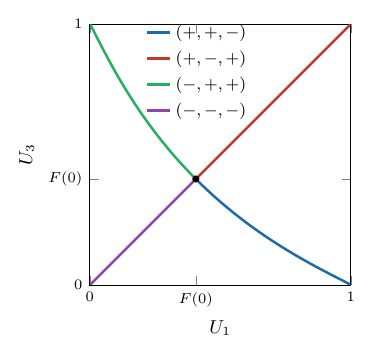}
  \caption{Marginal copula $(U_1, U_3)$.}
\end{subfigure}
\hfill
\begin{subfigure}[b]{0.47\textwidth}
  \centering
  \includegraphics[width=\textwidth]{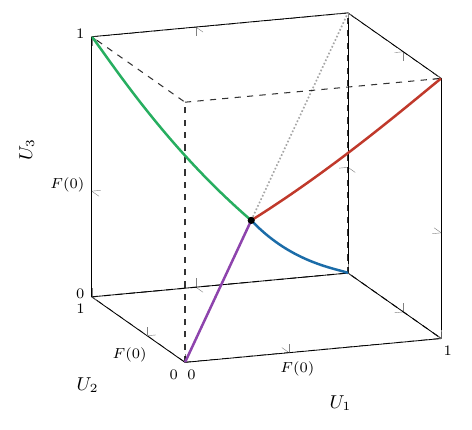}
  \caption{Support in $[0, 1]^3$.}
\end{subfigure}
\caption{Support of the minimizing copula for the same shifted Student-$t$ marginals as in Figure~\ref{fig:student-t-maximizer}. Each colour corresponds to one odd-parity sign pattern; the black dot marks the junction $(u_0, u_0, u_0)$.}
\label{fig:student-t-minimizer}
\end{figure}

\section{Omitted proofs}\label{app:proofs}
\begin{proof}[Proof of Lemma~\ref{lem:product-bounds}]
For any coupling $(X_1, \dots, X_d)$ with the prescribed marginals, the pointwise inequalities above and Lemma~\ref{lem:nonneg-product} give
$$\left|\E\left[\prod_{i=1}^d X_i\right]\right|\le \E\left[\prod_{i=1}^d |X_i|\right]\le \E\left[\prod_{i=1}^d G_i^{-1}(U)\right].$$
Taking the supremum and infimum over all couplings yields \eqref{eq:upper-bound} and \eqref{eq:lower-bound}. If the absolute values are comonotonic, the second inequality is an equality. If, in addition, the product is nonnegative (resp.\ nonpositive) a.s., then $\prod_{i=1}^d X_i=\prod_{i=1}^d |X_i|$ (resp.\ $\prod_{i=1}^d X_i=-\prod_{i=1}^d |X_i|$) a.s., which gives equality in the upper (resp.\ lower) bound.

Conversely, suppose that the right-hand side of \eqref{eq:upper-bound} is finite and equality holds in \eqref{eq:upper-bound} (resp.\ \eqref{eq:lower-bound}). Let $(X_1, \dots, X_d)$ be any coupling attaining $M_d^{\times}$ (resp.\ $m_d^{\times}$). Both inequalities in the displayed chain are then equalities, so the absolute-product expectation equals the comonotonic value. The nonnegative integrable random variable $\prod_{i=1}^d |X_i|-\prod_{i=1}^d X_i$ (resp.\ $\prod_{i=1}^d |X_i|+\prod_{i=1}^d X_i$) has expectation zero and is therefore zero a.s., which gives the required sign condition. If all $G_i$ are continuous, equality in the second inequality also implies comonotonicity of $(|X_1|, \dots, |X_d|)$ by Lemma~\ref{lem:nonneg-product}.
\end{proof}

\begin{proof}[Proof of Lemma \ref{lem:measurable-selection}]
We prove part (1) for the upper bound, and the lower bound will follow from a similar argument. The goal is to assign, to every feasible vector $\bm x\in P_d^+$, a probability for each sign pattern in $E_d$. These probabilities must vary measurably with $\bm x$ so that we may later substitute $\bm x=\bm p_u$. The argument is a finite-dimensional version of the measurable-selection construction behind the Kuratowski--Ryll-Nardzewski theorem; see, for example, \citet[Theorem~18.13]{aliprantis2006infinite}.

Write $m:=2^{d-1}$ and fix an ordering $E_d=\{\bm s^{(1)}, \dots, \bm s^{(m)}\}$. Let
$$\Delta_m:=\left\{\bm w\in[0, 1]^m:\sum_{j=1}^m w_j=1\right\}.$$
A vector $\bm w\in\Delta_m$ can be viewed as a probability distribution on $E_d$: the sign pattern $\bm s^{(j)}$ receives probability $w_j$. By the definition of $P_d^+$, a vector $\bm x$ belongs to $P_d^+$ precisely when there is some $\bm w\in\Delta_m$ such that
$$\bm x=\sum_{j=1}^m w_jv(\bm s^{(j)}).$$
Since the $i$th coordinate of $v(\bm s^{(j)})$ is $1$ exactly when $s_i^{(j)}=+1$, the $i$th coordinate of this identity is
$$x_i=\sum_{\substack{1\le j\le m\\s_i^{(j)}=+1}}w_j.$$
Thus, $x_i$ is the probability that the $i$th sign is positive. Membership in $P_d^+$ therefore guarantees that suitable weights exist. It remains to choose them measurably as $\bm x$ varies.

Since $P_d^+$ is a polytope, it admits a triangulation using only its vertices; see, for example, \cite{barvinok2025course}. Thus, there are finitely many simplices $T_1, \dots, T_N$, each spanned by vertices $v(\bm s^{(j)})$, such that
$$P_d^+=T_1\cup\cdots\cup T_N,$$
and any two simplices intersect, if at all, in a common face.

Fix one simplex and write
$$T_k=\operatorname{conv}\left\{v(\bm s^{(j_0)}), \dots, v(\bm s^{(j_r)})\right\}.$$
Every $\bm x\in T_k$ has unique barycentric coordinates $\lambda_0(\bm x), \dots, \lambda_r(\bm x)$ satisfying
$$\bm x=\sum_{\ell=0}^r\lambda_\ell(\bm x)v(\bm s^{(j_\ell)}), \qquad \lambda_\ell(\bm x)\ge0, \qquad \sum_{\ell=0}^r\lambda_\ell(\bm x)=1.$$
Use these coordinates as the desired probabilities on $T_k$: set $\Psi_{k, j_\ell}(\bm x):=\lambda_\ell(\bm x)$ and set $\Psi_{k, j}(\bm x):=0$ for every other index $j$. Barycentric coordinates are affine functions of $\bm x$, so the map $\Psi_k:T_k\to\Delta_m$ is continuous. Moreover,
$$\sum_{j=1}^m\Psi_{k, j}(\bm x)=1, \qquad \sum_{\substack{1\le j\le m\\s_i^{(j)}=+1}} \Psi_{k, j}(\bm x)=x_i, \quad i\in[d].$$
Consequently, if a random sign vector $\bm S$ takes the value $\bm s^{(j)}$ with probability $\Psi_{k, j}(\bm x)$, then $\p(S_i=+1)=x_i$ for every $i\in[d]$.

We next combine the maps from the different simplices. If $\bm x\in T_k\cap T_\ell$, then $T_k\cap T_\ell$ is a common face. On this face, the barycentric coordinates corresponding to vertices outside the face vanish, while the remaining coordinates are uniquely determined by the vertices of the face. Hence $\Psi_k(\bm x)=\Psi_\ell(\bm x)$. The simplices form a finite closed cover of $P_d^+$, and the continuous maps $\Psi_k$ agree wherever their domains overlap. They therefore paste together to form a continuous map
$$\Psi:P_d^+\longrightarrow\Delta_m.$$
For every $\bm x\in P_d^+$, the vector $\Psi(\bm x)$ gives probabilities on $E_d$ whose positive-sign marginals are exactly $x_1, \dots, x_d$.

It remains to evaluate this continuous choice at $\bm p_u$. Let
$$D:=\{u\in(0, 1):\bm p_u\in P_d^+\}.$$
The set $D$ is Borel because $\bm p_u$ is Borel-measurable and $P_d^+$ is closed. By assumption, $D$ has full Lebesgue measure. Choose $\bm e_1:=(1, 0, \dots, 0)\in\Delta_m$ and define
$$
\bm w(u):=
\begin{cases}
\Psi(\bm p_u), & u\in D, \\
\bm e_1, & u\notin D.
\end{cases}
$$
On $D$, the map $\bm w$ is the composition of the Borel map $u\mapsto\bm p_u$ and the continuous map $\Psi$; outside $D$, it is constant. Since $D$ is Borel, $\bm w$ is Borel-measurable on $(0, 1)$.

Finally, set $w_{\bm s^{(j)}}(u):=w_j(u)$. By construction, the identities in part~(1) hold for every $u\in D$, and hence for a.e.\ $u\in(0, 1)$.

For part~(2), repeat the same construction with $O_d$ in place of $E_d$ and $P_d^-$ in place of $P_d^+$. No other part of the argument changes.
\end{proof}

\begin{proof}[Proof of Theorem \ref{thm:attainability}]
We first prove that \eqref{it:feasible} implies \eqref{it:attain}. Assume that $\bm p_u\in P_d^+$ for a.e.\ $u$. By part~(1) of Lemma~\ref{lem:measurable-selection}, there are Borel-measurable weights $w_{\bm s}^\ast(u)$, $\bm s\in E_d$, satisfying the identities in that statement.

Use these weights in \eqref{eq:maximizer-construction}. For every $u$ at which the identities in Lemma~\ref{lem:measurable-selection} hold, the intervals
$$\bigl(W_{k-1}(u), W_k(u)\bigr], \qquad k=1, \dots, m,$$
form a disjoint partition of $(0, 1]$, and the $k$th interval has length $w_{\bm s^{(k)}}^\ast(u)$. Since $V$ is uniform and independent of $U$,
$$\p(\bm S=\bm s\mid U=u)=w_{\bm s}^\ast(u).$$
Thus $\bm S\in E_d$ a.s., and the marginal identities in part~(1) of Lemma~\ref{lem:measurable-selection} give $\p(S_i=+1\mid U=u)=p_i(u)$ for every $i\in[d]$ and a.e.\ $u$.

Now set $Y_i:=G_i^{-1}(U)$ and $X_i:=S_iY_i$. Since each $G_i^{-1}$ is nondecreasing, the vector
$$(Y_1, \dots, Y_d)=\bigl(G_1^{-1}(U), \dots, G_d^{-1}(U)\bigr)$$
is comonotonic. Also, $|X_i|=Y_i$ for every $i$. Since every sign pattern in $E_d$ has product $+1$,
$$\prod_{i=1}^dX_i =\left(\prod_{i=1}^dS_i\right)\left(\prod_{i=1}^dY_i\right) =\prod_{i=1}^dY_i \quad\text{a.s.}$$

It remains to check that each $X_i$ has the prescribed marginal distribution $F_i$. Fix $i\in[d]$ and a Borel set $B\subseteq[0, \infty)$. Conditioning on $U$ gives
\begin{align*}
\p(X_i\ge0, \ |X_i|\in B) &=\int_{\{u:G_i^{-1}(u)\in B\}}p_i(u)\d u.
\end{align*}
Since $G_i^{-1}(U)$ has density $g_i$, the quantile change-of-variables formula and \eqref{eq:sign-bias} give
\begin{align*}
\p(X_i\ge0, \ |X_i|\in B) &=\int_0^1\id_{\{G_i^{-1}(u)\in B\}}p_i(u)\d u\\
&=\int_B\frac{f_i(y)}{g_i(y)}g_i(y)\d y =\int_B f_i(y)\d y.
\end{align*}
The same calculation for a negative sign uses $1-p_i(u)=f_i(-y)/g_i(y)$ and yields
$$\p(X_i<0, \ |X_i|\in B) =\int_B f_i(-y)\d y.$$
The first identity gives the law of $X_i$ on the nonnegative half-line, and the second gives its law on the negative half-line. Together they show that $X_i\sim F_i$.

We have therefore constructed a random vector with the required marginals. The product identity above proves \eqref{eq:attain-eq}, and hence \eqref{it:attain}. By Lemma~\ref{lem:product-bounds}, the common value in \eqref{eq:attain-eq} is the universal upper bound $M_d^\times$.

For the converse, assume that $(X_1, \dots, X_d)$ satisfies \eqref{eq:attain-eq}. The equality conditions in Lemma~\ref{lem:product-bounds} imply that $|X_1|, \dots, |X_d|$ are comonotonic and that $\prod_{i=1}^dX_i\ge0$ a.s.

Define $U_i:=G_i(|X_i|)$. Since $G_i$ is continuous, each $U_i$ is uniformly distributed. Moreover, $(U_1, \dots, U_d)$ is comonotonic because it is obtained by applying nondecreasing transformations to the comonotonic vector $(|X_1|, \dots, |X_d|)$. Comonotonic random variables with the same uniform distribution are equal a.s.; denote their common value by $U$. The quantile inversion identity then gives
$$|X_i|=G_i^{-1}(U) \quad\text{a.s.\ for every }i\in[d].$$
Define $S_i:=\operatorname{sign}(X_i)$. Under Assumption~\ref{ass:regularity}, $\p(X_i=0)=0$, so the choice of sign at zero is immaterial. Because $\prod_{i=1}^dX_i\ge0$ a.s., the sign vector $\bm S$ has even parity; that is, $\bm S\in E_d$ a.s.

Because $E_d$ is finite, its conditional sign probabilities can be constructed directly from conditional expectations. For each $\bm s\in E_d$, let
$$Z_{\bm s}:=\E\left[\id_{\{\bm S=\bm s\}}\mid\sigma(U)\right].$$
The random variable $Z_{\bm s}$ is $\sigma(U)$-measurable. By the Doob--Dynkin lemma, there is a Borel function $w_{\bm s}^\ast:(0, 1)\to[0, 1]$ such that
$$Z_{\bm s}=w_{\bm s}^\ast(U) \quad\text{a.s.}$$
Conditional expectations preserve nonnegativity, and the indicators $\id_{\{\bm S=\bm s\}}$, $\bm s\in E_d$, sum to one. Consequently, the functions $w_{\bm s}^\ast(u)$ form a probability distribution on $E_d$ for a.e.\ $u$. Here we use that $U$ is uniform: an identity holding a.s.\ after evaluation at $U$ holds for Lebesgue-a.e.\ $u\in(0, 1)$.

We must show that this distribution has the required sign marginals. Fix $i\in[d]$ and define
$$q_i(u):=\sum_{\substack{\bm s\in E_d\\s_i=+1}}w_{\bm s}^\ast(u).$$
Summing the corresponding conditional expectations gives
$$q_i(U)=\E\left[\id_{\{S_i=+1\}}\mid\sigma(U)\right] \quad\text{a.s.}$$
Therefore, for every $t\in[0, 1]$,
\begin{align*}
\int_0^t q_i(u)\d u &=\E\left[\id_{\{U\le t\}}q_i(U)\right] =\p(S_i=+1, \ U\le t)\\
&=\p\bigl(X_i\ge0, \ G_i(|X_i|)\le t\bigr) =H_i(t) =\int_0^t p_i(u)\d u.
\end{align*}
The first equality uses the fact that $U$ is uniform. The second follows by multiplying the conditional-expectation identity for $q_i(U)$ by $\id_{\{U\le t\}}$ and taking expectations. The remaining equalities use $S_i=+1$ iff $X_i\ge0$, the definition of $H_i$, and Lemma~\ref{lem:sign-bias}.

The two indefinite integrals are absolutely continuous and agree at every $t$. Their derivatives therefore agree a.e., so $q_i(u)=p_i(u)$ for a.e.\ $u$. Since there are only finitely many coordinates, these identities hold simultaneously for all $i\in[d]$ outside one null set. On that full-measure set,
$$\bm p_u =\sum_{\bm s\in E_d}w_{\bm s}^\ast(u)v(\bm s) \in P_d^+.$$
Indeed, the $i$th coordinate of the sum is $q_i(u)=p_i(u)$. The right-hand side is a convex combination of vertices of $P_d^+$, and hence belongs to $P_d^+$. This proves \eqref{it:feasible} and completes the proof.
\end{proof}

\begin{proof}[Proof of Corollary \ref{cor:balanced-marginals}]
The density condition and \eqref{eq:sign-bias} imply that $p_i(u)\in[1/d, (d-1)/d]$ for every $i\in[d]$ and a.e.\ $u\in(0, 1)$. For each parity polytope, the nontrivial facet inequalities are given by
$$\sum_{i\in S}x_i-\sum_{i\notin S}x_i\le |S|-1$$
where $S\subseteq[d]$ ranges over subsets of the corresponding parity. Every $\bm x\in[1/d, (d-1)/d]^d$ satisfies these inequalities because the left-hand side is at most
$$|S|\frac{d-1}{d}-(d-|S|)\frac1d=|S|-1.$$
Thus $[1/d, (d-1)/d]^d\subseteq P_d^+\cap P_d^-$, and the result follows from Theorems~\ref{thm:attainability} and~\ref{thm:lower-attainability}.
\end{proof}

\begin{proof}[Proof of Proposition \ref{cor:iid}]
(1) For the upper bound $M_d^{\times}$, write $\mathbf 1_d:=(1, \dots, 1)$ and $\mathbf 0_d:=(0, \dots, 0)$. If $d$ is even, then the all-positive and all-negative sign vectors both belong to $E_d$, so the corresponding parity vertices are $\mathbf 1_d$ and $\mathbf 0_d$. Hence for every $p(u)\in[0, 1]$,
$$p(u)\mathbf 1_d=p(u) \mathbf 1_d+(1-p(u)) \mathbf 0_d\in P_d^+.$$
It follows that $(p(u), \dots, p(u))\in P_d^+$ for every $u$, and $M_d^{\times}$ is sharp by Theorem \ref{thm:attainability}. 

For the lower bound $m_d^{\times}$, We must determine when the diagonal point $p(u) \mathbf{1}_d$ belongs to $P_d^-$. If $d$ is even, every $\bm s\in O_d$ has an odd number of $-1$ entries, hence the number of $+1$ entries is also odd. The vertex coordinate sums $\sum_{i=1}^d v_i(\bm s)$ therefore lie in $\{1, 3, \dots, d-1\}$. For necessity, any point in $P_d^-$ has coordinate sum in $[1, d-1]$. On the diagonal, this sum is $dp(u)$; it follows that $p(u)\in[1/d, (d-1)/d]$. For $d=2$, this gives $p(u)=1/2$, and sufficiency is immediate since $(1/2, 1/2)=\tfrac12(1, 0)+\tfrac12(0, 1)\in P_2^-$. For even $d\ge 4$, the $d$ sign patterns with exactly one $+1$ entry belong to $O_d$ and their indicator-vector average is $(1/d) \mathbf{1}_d\in P_d^-$; the $d$ patterns with exactly one $-1$ entry also belong to $O_d$ and their average is $((d-1)/d) \mathbf{1}_d\in P_d^-$. For $p(u)\in[1/d, (d-1)/d]$,
$$p(u) \mathbf{1}_d=\lambda \frac{d-1}{d} \mathbf{1}_d+(1-\lambda) \frac{1}{d} \mathbf{1}_d, \text{ where } \lambda:=\frac{dp(u)-1}{d-2}\in[0, 1].$$Applying Theorem~\ref{thm:lower-attainability} pointwise in $u$ gives the stated criteria.

(2) Now assume $d$ is odd. In the case of $M_d^{\times}$, for necessity, every $\bm s\in E_d$ has an even number of $-1$ entries, and since $d$ is odd, the number of $+1$ entries is odd. Since $v_i(\bm s)=\id_{\{s_i=+1\}}$, the coordinate sum $\sum_{i=1}^d v_i(\bm s)$ counts the $+1$ entries and is therefore at least $1$. Since $P_d^+$ is the convex hull of these vertices, every $\bm x\in P_d^+$ satisfies $\sum_{i=1}^d x_i\ge 1$. Thus if $p(u)\mathbf 1_d\in P_d^+$, then $p(u)\ge 1/d$. For sufficiency, let $e^{(j)}\in\R^d$ be the $j$-th standard basis vector. Because $d$ is odd, the sign pattern with exactly one positive coordinate and negatives elsewhere has product $+1$, so each $e^{(j)}$ is an admissible parity vertex. Their uniform mixture is
$$\frac1d\sum_{j=1}^d e^{(j)}=\frac1d \mathbf 1_d\in P_d^+.$$
Also $\mathbf 1_d\in P_d^+$ because the all-positive sign pattern is admissible. Hence for every $p(u)\in[1/d, 1]$,
$$p(u)\mathbf 1_d=\lambda \mathbf 1_d+(1-\lambda)\frac1d \mathbf 1_d, \text{ where } \lambda:=\frac{dp(u)-1}{d-1}\in[0, 1].$$
It follows that $p\mathbf 1_d\in P_d^+$ for every $p(u)\in[1/d, 1]$. Applying Theorem \ref{thm:attainability} pointwise in $u$ gives the stated criterion.

In the case of $m_d^{\times}$, every $\bm s\in O_d$ has an odd number of $-1$ entries and $d$ is odd, so the number of $+1$ entries is even. The vertex coordinate sums $\sum_{i=1}^d v_i(\bm s)$ lie in $\{0, 2, \dots, d-1\}$. For necessity, $dp(u)\in[0, d-1]$, so $p(u)\le(d-1)/d$. For sufficiency, the all-negative vector $\bm s=(-1, \dots, -1)$ belongs to $O_d$, giving $\mathbf{0}_d\in P_d^-$. The $d$ patterns with exactly $d-1$ positive entries have coordinate sum $d-1$; their average is $((d-1)/d) \mathbf{1}_d\in P_d^-$. For $p(u)\in[0, (d-1)/d]$,
$$p(u) \mathbf{1}_d= \lambda\frac{d-1}{d} \mathbf{1}_d+\left(1-\lambda\right) \mathbf{0}_d\in P_d^-, \text{ where } \lambda:=\frac{dp(u)}{d-1}.$$
Applying Theorem~\ref{thm:lower-attainability} pointwise in $u$ gives the stated criteria.
\end{proof}

\begin{proof}[Proof of Proposition~\ref{prop:d3-copula}]
Since $|F_1^{-1}(U)| \sim G_1$, the random variable $W$ is uniformly distributed on $[0, 1]$ and is a deterministic function of $U$ alone. Since $I$ is deterministic given $U$, only two even-parity patterns carry positive mass at each level, and the mixing function $s(u)$ selects between them. For $u > u_{0, 1}$ and the maximizer, the active patterns are $(+, +, +)$ and $(+, -, -)$ with conditional weights $s(u)$ and $1-s(u)$; the conditional probability that coordinate~$1$ is positive given $W = w$ is $p_1(w)$, so the unconditional pattern weights at magnitude level $w$ are $p_1(w)s(u) = w_{+++}(w)$ and $p_1(w)(1-s(u)) = w_{+--}(w)$. Solving for $s$ and matching with~\eqref{eq:d3-even-weights} yields the upper branch of~\eqref{eq:d3-maximizer-mixing}; the same argument on $(0, u_{0, 1}]$ gives the lower branch. The even-parity feasibility $\left(p_1(w), p_2(w), p_3(w)\right)\in P_3^+$ for a.e.\ $w$ is equivalent to $s(u) \in [0, 1]$ on both halves, and the marginal property $U_i \sim \U[0, 1]$ follows from Theorem~\ref{thm:attainability}. The minimizing copula is obtained from~\eqref{eq:d3-maximizer-copula} by swapping the role of $I$ in the third coordinate, which sends every sign pattern from $E_3$ to $O_3$ and ensures $X_1 X_2 X_3 \le 0$ a.s.; the marginal property follows from Theorem~\ref{thm:lower-attainability}. The derivation of~\eqref{eq:d3-minimizer-mixing} follows the same pattern as for the maximizer. The odd-parity feasibility $\left(p_1(w), p_2(w), p_3(w)\right)\in P_3^-$ for a.e.\ $w$ is equivalent to $s(u) \in [0, 1]$ on both halves.
\end{proof}

\begin{proof}[Proof of Proposition \ref{prop:recursive-parity}]
Suppose $\bm p\in P_d^+$ and $p_1\in(0, 1)$. Then $\bm p=\sum_{\bm s\in E_d}w_{\bm s}v(\bm s)$ with $w_{\bm s}\ge 0$ and $\sum w_{\bm s}=1$. Group the sum by $s_1$: the patterns with $s_1=+1$ contribute total weight $p_1$ and, after conditioning, the residual vector $(s_2, \dots, s_d)$ has product $+1$ (even parity in $d-1$ dimensions); similarly, $s_1=-1$ patterns contribute weight $1-p_1$ and the residual has odd parity. Where $q_j^+:=\sum_{\bm s:s_1=+1, \,s_j=+1}w_{\bm s}/p_1$ and $q_j^-:=\sum_{\bm s:s_1=-1, \,s_j=+1}w_{\bm s}/(1-p_1)$. Then $\bm q^+\in P_{d-1}^+$, $\bm q^-\in P_{d-1}^-$, and the law of total probability gives~\eqref{eq:recursive-decomposition}.

Conversely, given $\bm q^+\in P_{d-1}^+$ and $\bm q^-\in P_{d-1}^-$ satisfying~\eqref{eq:recursive-decomposition}, write $\bm q^+=\sum_{\bm r\in E_{d-1}}\alpha_{\bm r}v(\bm r)$ and $\bm q^-=\sum_{\bm r\in O_{d-1}}\beta_{\bm r}v(\bm r)$. Setting $w_{(+1, \bm r)}:=p_1\alpha_{\bm r}$ for $\bm r\in E_{d-1}$ and $w_{(-1, \bm r)}:=(1-p_1)\beta_{\bm r}$ for $\bm r\in O_{d-1}$ gives a valid even-parity distribution on $E_d$ with marginal biases $\bm p$, so $\bm p\in P_d^+$.

The boundary cases $p_1\in\{0, 1\}$ follow by taking $w_{\bm s}=0$ for all patterns with the excluded value of $s_1$. The odd-parity analogue is obtained by interchanging the residual parity requirement.
\end{proof}
\end{document}